\newtheoremstyle{myplain}
  {8.0pt plus 2.0pt minus 4.0pt}%
  {8.0pt plus 2.0pt minus 4.0pt}%
  {\itshape}%
  {}
  {\bfseries}%
  {~---}%
  { }%
  {\thmname{#1}\thmnumber{ #2}\thmnote{ (#3)}}%
\theoremstyle{myplain}
\newtheorem{thm}{Theorem}[section]
\newtheorem{definition}[thm]{Definition}
\newtheorem{theorem}[thm]{Theorem}
\newtheorem{lemma}[thm]{Lemma}
\newtheorem{proposition}[thm]{Proposition}
\newtheorem{corollary}[thm]{Corollary}
\theoremstyle{remark}
\newtheorem{remark}[thm]{Remark}
\newtheorem{example}[thm]{Example}
\newtheorem{examples}[thm]{Examples}
\newtheorem{question}[thm]{Question}
\numberwithin{equation}{section}
\def\equationautorefname~#1\null{%
(#1)\null
}
\def\itemautorefname~#1\null{%
#1\null
}
\def\thmautorefname~#1\null{%
#1\null
}
\def\subsubsectionautorefname~#1\null{%
#1\null
}
\def\subsectionautorefname~#1\null{%
#1\null
}
\def\sectionautorefname~#1\null{%
#1\null
}
\newcommand{\Mcal}{\mathcal{M}}
\newcommand{\Pcal}{\mathcal{P}}
\newcommand{\Bscr}{\mathscr{B}}
\newcommand{\Xscr}{\mathscr{X}}
\newcommand{\Zscr}{\mathscr{Z}}
\renewcommand{\AA}{\mathbb{A}}
\newcommand{\CC}{\mathbb{C}}
\newcommand{\FF}{\mathbb{F}}
\newcommand{\NN}{\mathbb{N}}
\newcommand{\PP}{\mathbb{P}}
\newcommand{\QQ}{\mathbb{Q}}
\newcommand{\RR}{\mathbb{R}}
\newcommand{\ZZ}{\mathbb{Z}}
\newcommand{\Mfrak}{\mathfrak{M}}
\newcommand{\Nfrak}{\mathfrak{N}}
\DeclareMathOperator{\Pic}{Pic}
\DeclareMathOperator{\PGL}{PGL}
\DeclareMathOperator{\cHom}{\mathscr{H}\text{\kern -3pt\calligra{om}}\;}
\DeclareMathOperator{\cEnd}{\mathscr{E}\text{\kern -3pt\calligra{nd}}\;}
\DeclareMathOperator{\cExt}{\mathscr{E}\text{\kern -3pt\calligra{xt}}\;}
\DeclareMathOperator{\cTor}{\mathscr{T}\text{\kern -3pt\calligra{or}}\;}
\newcommand{\osh}[1]{\mathcal{O}_{#1}}
\DeclareMathOperator{\im}{im}
\DeclareMathOperator{\ann}{ann}
\DeclareMathOperator{\depth}{depth}
\DeclareMathOperator{\length}{length}
\let\chiletter\chi
\renewcommand*{\chi}{\mathop{\chiletter}}
\renewcommand*{\phi}{\varphi}
\renewcommand*{\theta}{\vartheta}
\renewcommand*{\epsilon}{\varepsilon}
\def\ceil#1{\lceil{}#1\rceil{}}
\def\floor#1{\lfloor{}#1\rfloor{}}
\definecolor{lightred}{rgb}{0.8,0.1,0}
\definecolor{blue}{rgb}{0.1,0.1,0.8}
\lstdefinelanguage{macaulay2}{
	morecomment=[l]{--},
	morekeywords={
	    Adjacent, AffineVariety, AfterNoPrint, AfterPrint, Algorithm, Alignment, AssociativeExpression, Authors, AuxiliaryFiles, BLAS, Bag, BaseFunction, BaseRow, BasisElementLimit, Bayer, BeforePrint, BesselJ, BesselY, BettiTally, Binary, BinaryOperation, Binomial, Boolean, Boxes, CC, CacheExampleOutput, CacheFunction, CacheTable, CallLimit, Caveat, Center, Certification, ChainComplex, ChainComplexMap, CheckDocumentation, ClosestFit, CodimensionLimit, CoefficientRing, Cofactor, CoherentSheaf, Command, CompiledFunction, CompiledFunctionBody, CompiledFunctionClosure, Complement, CompleteIntersection, ComplexField, Configuration, Consequences, Constant, DebuggingMode, Decompose, DegreeLift, DegreeLimit, DegreeMap, DegreeOrder, DegreeRank, Degrees, Dense, Density, Descending, Descent, Description, Dictionary, Dispatch, Divide, DivideConquer, DocumentTag, Down, EXAMPLE, Eliminate, Email, Encapsulate, EncapsulateDirectory, End, Engine, EngineRing, EulerConstant, ExampleFiles, Exclude, Expression, Ext, FLINT, Fano, FastNonminimal, File, FileName, FilePosition, FlatMonoid, Flexible, FollowLinks, ForestNode, Format, FractionField, Function, FunctionBody, FunctionClosure, GBDegrees, GF, GLex, GRevLex, Gamma, GeneralOrderedMonoid, GenerateAssertions, Generic, Global, GlobalAssignHook, GlobalDictionary, GlobalReleaseHook, GradedModuleMap, Grassmannian, GroebnerBasisOptions, GroupLex, GroupRevLex, HH, HardDegreeLimit, HashTable, HeaderType, Heading, Headline, Heft, Height, Hermitian, Holder, Hom, HomePage, Homogeneous, Homogeneous2, HorizontalSpace, IgnoreExampleErrors, ImmutableType, IndeterminateNumber, IndexedVariable, InexactField, InexactFieldFamily, InfiniteNumber, InfoDirSection, Inhomogeneous, Inputs, InstallPrefix, Intersection, InverseMethod, Inverses, Iterate, Join, Key, Keyword, LUdecomposition, Layout, Left, LengthLimit, Lex, Limit, Linear, LinearAlgebra, LoadDocumentation, Local, LocalDictionary, LongPolynomial, LowerBound, MPIR, Macaulay2, Macaulay2Doc, MakeDocumentation, MakeInfo, MakeLinks, MatrixExpression, MaxReductionCount, MaximalRank, MethodFunction, MethodFunctionWithOptions, MinimalGenerators, MinimalMatrix, Minimize, Minus, Module, ModuleMap, Monoid, MonoidElement, MonomialIdeal, MonomialOrder, MonomialSize, Monomials, MutableHashTable, MutableList, MutableMatrix, NCLex, Net, NetFile, NewFromMethod, NewMethod, NewOfFromMethod, NewOfMethod, NoPrint, NonAssociativeProduct, Number, OO, OneExpression, OptionTable, Order, OrderedMonoid, Outputs, Package, PackageDictionary, PackageExports, PackageImports, PackagePrefix, PairLimit, PairsRemaining, Parenthesize, Partition, Pfaffians, PolynomialRing, Position, Postfix, Power, Prefix, PrimaryTag, PrimitiveElement, Print, Product, ProductOrder, Proj, ProjectiveHilbertPolynomial, ProjectiveVariety, Pseudocode, PushforwardComputation, QRDecomposition, Quotient, QuotientRing, RR, Range, RealField, Reduce, Reload, RemakeAllDocumentation, Resolution, Result, RevLex, Reverse, Right, Ring, RingElement, RingMap, RowExpression, RunExamples, SPACE, SVD, SYNOPSIS, Schubert, ScriptedFunctor, SeeAlso, SelfInitializingType, SeparateExec, Set, SheafOfRings, SizeLimit, SkewCommutative, Sort, SortStrategy, SourceCode, SourceRing, SparseMonomialVectorExpression, SparseVectorExpression, Spec, Standard, StopBeforeComputation, StopWithMinimalGenerators, Strategy, String, SubringLimit, Subscript, Sugarless, Sum, SumOfTwists, Superscript, Symbol, Syzygies, SyzygyLimit, SyzygyMatrix, SyzygyRows, TEST, Tally, Task, TeXmacs, Thing, Time, Tor, Toric, TotalPairs, TreeNode, TypicalValue, URL, Undo, Unique, Unmixed, Up, UpdateOnly, UpperTriangular, Usage, UseHilbertFunction, UseSyzygies, UserMode, Variable, VariableBaseName, Variables, Variety, Vector, Verbose, Verify, Version, VerticalList, VerticalSpace, Weights, WeylAlgebra, Wikipedia, ZeroExpression, abs, accumulate, acos, acosh, acot, addCancelTask, addDependencyTask, addEndFunction, addHook, addStartFunction, addStartTask, adjoint, adjoint', agm, alarm, all, allowableThreads, ambient, ancestor, ancestors, and, annihilator, antipode, any, append, applicationDirectory, applicationDirectorySuffix, apply, applyKeys, applyPairs, applyTable, applyValues, apropos, argument, ascii, asin, asinh, assert, associatedPrimes, atEndOfFile, atan, atan2, autoload, backupFileRegexp, baseFilename, baseName, baseRings, basis, beginDocumentation, benchmark, betti, between, binomial, borel, break, cacheValue, cancelTask, capture, catch, ceiling, centerString, chainComplex, char, characters, check, clean, clearAll, clearEcho, clearOutput, close, closeIn, closeOut, code, codim, coefficient, coefficientRing, coefficients, cohomology, coimage, cokernel, collectGarbage, columnAdd, columnMult, columnPermute, columnSwap, columnate, combine, commandLine, commonRing, commonest, comodule, compactMatrixForm, complement, complete, components, compose, compositions, compress, concatenate, cone, conjugate, connectionCount, content, continue, contract, contract', copyDirectory, copyFile, copyright, cos, cosh, cot, cotangentSheaf, coth, cover, coverMap, cpuTime, createTask, csc, csch, current, currentDirectory, currentFileDirectory, currentFileName, currentLayout, currentLineNumber, currentPackage, currentString, currentTime, dd, debug, debugError, debugLevel, debugging, debuggingMode, decompose, deepSplice, default, defaultPrecision, degree, degreeLength, degrees, degreesMonoid, degreesRing, delete, demark, denominator, depth, describe, determinant, diagonalMatrix, dictionary, dictionaryPath, diff, diff', difference, differentiation, dim, directSum, disassemble, dismiss, divideByVariable, document, drop, dual, dumpdata, eagonNorthcott, echoOff, echoOn, edit, eigenvalues, eigenvectors, eint, elapsedTime, elapsedTiming, elements, encapDirectory, end, endPackage, endl, engine, engineDebugLevel, entries, environment, epkg, erf, erfc, error, errorDepth, euler, eulers, examples, exec, exit, exp, expm1, exponents, export, exportMutable, expression, extend, exteriorPower, factor, false, fileExecutable, fileExists, fileExitHooks, fileLength, fileMode, fileReadable, fileTime, fileWritable, fillMatrix, findFiles, findHeft, findSynonyms, first, firstkey, fittingIdeal, flagLookup, flatten, flattenRing, flip, floor, flush, fold, for, forceGB, fork, format, frac, fraction, frames, from, functionBody, gb, gbRemove, gbSnapshot, gbTrace, gcd, gcdCoefficients, genera, generateAssertions, generator, generators, genericMatrix, genericSkewMatrix, genericSymmetricMatrix, genus, get, getChangeMatrix, getGlobalSymbol, getNetFile, getNonUnit, getPrimeWithRootOfUnity, getSymbol, getWWW, getc, getenv, gfan, givaro, global, globalAssign, globalAssignFunction, globalAssignment, globalAssignmentHooks, globalReleaseFunction, gradedModule, gradedModuleMap, graphIdeal, graphRing, groebnerBasis, groupID, hash, hashTable, hashing, heft, height, help, hh, hilbertFunction, hilbertPolynomial, hilbertSeries, hold, homeDirectory, homogenize, homology, homomorphism, homomorphism', horizontalJoin, httpHeaders, hypertext, ideal, ideals, if, ii, image, imaginaryPart, incomparable, independentSets, indeterminate, index, indexComponents, indices, inducedMap, inducesWellDefinedMap, info, infoHelp, input, insert, installAssignmentMethod, installHilbertFunction, installMethod, installPackage, installedPackages, instance, instances, interpreterDepth, intersect, inverse, inversePermutation, irreducibleCharacteristicSeries, isANumber, isAffineRing, isBorel, isCanceled, isCommutative, isConstant, isDirectSum, isDirectory, isField, isFinite, isFinitePrimeField, isFreeModule, isGlobalSymbol, isHomogeneous, isIdeal, isInfinite, isInjective, isInputFile, isIsomorphism, isListener, isModule, isMonomialIdeal, isOpen, isOutputFile, isPolynomialRing, isPrime, isPrimitive, isPseudoprime, isQuotientModule, isQuotientOf, isQuotientRing, isReady, isReal, isRegularFile, isRing, isSkewCommutative, isSorted, isSquareFree, isSubmodule, isSubquotient, isSubset, isSurjective, isTable, isUnit, isWellDefined, jacobian, join, kernel, keys, kill, koszul, last, lastMatch, lcm, leadCoefficient, leadComponent, leadMonomial, leadTerm, length, lift, liftable, limitFiles, limitProcesses, lineNumber, lines, linkFile, list, listForm, listLocalSymbols, listSymbols, listUserSymbols, lngamma, load, loadDepth, loadPackage, loaddata, loadedFiles, loadedPackages, local, localDictionaries, locate, log, log1p, lookup, lookupCount, lrslib, makeDirectory, makeDocumentTag, makePackageIndex, map, markedGB, match, mathML, matrices, matrix, max, maxAllowableThreads, maxExponent, maxPosition, member, memoize, merge, mergePairs, method, methodOptions, methods, min, minExponent, minPosition, mingens, mingle, minimalBetti, minimalPresentation, minimalPresentationMap, minimalPresentationMapInv, minimalPrimes, minimizeFilename, minors, minus, mkdir, mod, module, modules, modulo, monoid, monomialIdeal, monomialSubideal, monomials, moveFile, multidegree, mutable, mutableIdentity, mutableMatrix, nauty, needs, needsPackage, net, netList, new, newCoordinateSystem, newNetFile, newPackage, newRing, newline, nextPrime, nextkey, norm, normaliz, not, notImplemented, null, nullaryMethods, nullhomotopy, numColumns, numRows, number, numerator, numeric, numgens, of, ofClass, on, oo, oooo, openDatabase, openDatabaseOut, openFiles, openIn, openInOut, openListener, openOut, openOutAppend, operatorAttributes, operators, options, order, override, pack, package, packages, pad, pager, pairs, pari, part, partition, partitions, parts, path, pdim, peek, peek', permanents, permutations, pfaffians, pi, pivots, plus, poincare, poincareN, position, positions, power, powermod, powers, precision, prefixDirectory, prefixPath, preimage, prepend, presentation, pretty, print, printString, printWidth, printingLeadLimit, printingPrecision, printingSeparator, printingTimeLimit, printingTrailLimit, processID, product, profile, profileSummary, projectiveHilbertPolynomial, promote, protect, prune, pruningMap, pseudoRemainder, pseudocode, pushForward, quit, quotient, quotient', quotientRemainder, quotientRemainder', radical, random, randomKRationalPoint, randomMutableMatrix, rank, read, readDirectory, realPart, realpath, recursionDepth, recursionLimit, reduceHilbert, regex, registerFinalizer, regularity, relations, relativizeFilename, remainder, remainder', remove, removeDirectory, removeFile, removeHook, removeLowestDimension, reorganize, replace, reshape, resolution, restart, return, returnCode, reverse, ring, rootPath, rootURI, roots, rotate, round, rowAdd, rowMult, rowPermute, rowSwap, rsort, run, runHooks, runLengthEncode, saturate, scan, scanKeys, scanLines, scanPairs, scanValues, schedule, schreyerOrder, scriptCommandLine, searchPath, sec, sech, seeParsing, select, selectInSubring, selectVariables, separate, separateRegexp, sequence, set, setEcho, setGroupID, setIOExclusive, setIOSynchronized, setIOUnSynchronized, setRandomSeed, setup, setupEmacs, sheaf, sheafExt, sheafHom, show, showClassStructure, showHtml, showStructure, showTex, showUserStructure, sin, singularLocus, sinh, size, size2, sleep, smithNormalForm, solve, someTerms, sort, sortColumns, source, splice, splitWWW, sqrt, stack, standardForm, standardPairs, stashValue, status, stderr, stdio, step, stopIfError, sublists, submatrix, submatrix', submatrixByDegrees, subquotient, subscript, subsets, substitute, substring, subtable, sum, super, superscript, support, switch, symbol, symlinkDirectory, symlinkFile, symmetricAlgebra, symmetricPower, synonym, syz, syzygyScheme, table, take, tally, tan, tangentSheaf, tanh, target, taskResult, tensor, tensorAssociativity, terms, tex, texMath, throw, time, times, timing, to, toAbsolutePath, toCC, toExternalString, toField, toList, toLower, toRR, toSequence, toString, toUpper, topCoefficients, topComponents, topLevelMode, topcom, trace, transpose, trim, true, truncate, truncateOutput, try, tutorial, typicalValues, ultimate, unbag, uncurry, undocumented, uniform, uninstallAllPackages, uninstallPackage, unique, unsequence, unstack, use, userSymbols, value, values, variables, varieties, variety, vars, vector, version, viewHelp, wedgeProduct, weightRange, when, while, width, wrap, xor, youngest, zero, zeta
	  }}
\tikzset{
    state/.style={
           rectangle,
           rounded corners,
           draw=black,
           minimum height=2em,
           inner sep=0pt,
           text centered,
           },
}
\title{Gorenstein stable surfaces with $K_{X}^{2}=2$ and $\chi(\osh{X})=4$}
\author{Ben Anthes}
\date{December 2018}
\email{anthes@mathematik.uni-marburg.de}
\subjclass[2010]{14J10 (primary) and 14J29, 14J17, 14Q05, 14D07, 32S35 (secondary)}
\newcommand{\p}{\mathop{p}\nolimits}
\newcommand{\q}{\mathop{q}\nolimits}
\newcommand{\olMfrak}{\vphantom{Mfrak}\smash{\overline \Mfrak}}
\newcommand{\olF}{\vphantom{F}\smash{\overline F}}
\newcommand{\olMcal}{\vphantom{Mcal}\smash{\overline \Mcal}}
\newcommand{\olPcal}{\vphantom{Pcal}\smash{\overline \Pcal}}
\newcommand{\olone}{\vphantom{1}\smash{\overline1}}
\newcommand{\oltwo}{\vphantom{2}\smash{\overline2}}
\newcommand{\XXX}{\olMfrak_{2,4}^{\textnormal{Gor}}}
\newcommand{\YYY}[4]{\Nfrak_{1^{#1}\olone^{#2}2^{#3}\oltwo^{#4}}}
\newcommand{\XXXst}{\Mcal^{\textnormal{Gor}}_{2,4}}
\newcommand{\minmod}[1]{#1_{\textnormal{min}}}
\begin{document}
\pagenumbering{arabic}

\begin{abstract}
We define and study a concrete stratification of the moduli space of Gorenstein stable surfaces $X$ satisfying $K_{X}^2 = 2$ and $\chi(\mathcal{O}_{X}) = 4$, by first establishing an isomorphism with the moduli space of plane octics with certain singularities, which is then easier to handle concretely.
In total, there are 47 inhabited strata with altogether 78 components.
\end{abstract}

\maketitle
\tableofcontents

\section{Introduction}

One of the most important bounds on the geography of surfaces of general type is Noether's inequality $2\chi(\osh{X})\leq K_{X}^2+6$.
A minimal surface of general type satisfying equality here is said to be on the Noether line. 
Since they have been studied intensively by Horikawa~\cite{Horikawa:1976}, they are also called \emph{Horikawa surfaces}.
The smallest possible invariants on the Noether line are $K_{X}^2 = 2$ and $\chi(\osh{X}) = 4$ and it is a classical fact that (the canonical model of) the corresponding surfaces are double-covers of $\PP^{2}$, branched over an octic curve with at worst simple singularities, via the morphism defined by the canonical linear system $|K_{X}|$.
Conversely, a double-cover of the plane branched over an octic with at worst simple singularities gives an example of such a surface.
Therefore, the Gieseker-moduli space $\Mfrak_{2,4}$ of canonical models $X$ of surfaces of general type with invariants $K_{X}^2 = 2$ and $\chi(\osh{X}) = 4$ is in bijection with (in fact, isomorphic to) the moduli space of plane curves of degree $8$ with at worst simple singularities.

The subject of this article is the study of the modular compactification $\olMfrak_{2,4}$ of $\Mfrak_{2,4}$ parametrising stable surfaces $X$ with the same numerical invariants $K_{X}^{2} = 2$ and $\chi(\osh{X}) = 4$.
We refer to this as the \emph{KSBA-compactification}, for Koll\'{a}r and Shepherd-Barron \cite{KSB:1988} and Alexeev \cite{Alexeev:1994}.
We thereby continue the series of works by Franciosi, Pardini and Rollenske \cite{FPR:2015,FPR:2015b,FPR:2017} who investigated the moduli spaces parametrising Gorenstein stable surfaces $X$ with $K_{X}^2 = 1$ using similar methods.

We can only handle the \emph{Gorenstein} stable surfaces since this allows us to consider the canonical map.
As in the classical case, the canonical linear system $|K_{X}|$ defines a double-cover of the plane, branched over an octic curve; this is the content of Corollary~\ref{cor:is-double-cover-of-pp2}.
Conversely, if $B\subset \PP^2$ is a curve of degree $8$ such that the pair $(\PP,\tfrac{1}{2}B)$ is log-canonical, then the double-cover $X$ of $\PP^{2}$ branched over $B$, which is essentially unique since $\Pic(\PP^2)$ is torsion-free, is a Gorenstein stable surface satisfying $K_{X}^2 = 2$ and $\chi(\osh{X}) = 4$.
This way, we obtain an isomorphism between the moduli spaces of those plane octics and the moduli space $\XXX\subset \olMfrak_{2,4}$ of Gorenstein stable surfaces $X$ satisfying $K_{X}^2 = 2$ and $\chi(\osh{X}) = 4$; see Theorem~\ref{theorem:stack-isomorphism}.
This allows us to use the rich theory of plane curves and the computer algebra system Macaulay2 \cite{M2} to get some understanding of the boundary components of $\Mfrak_{2,4}$ in $\XXX$.

More precisely, we will define a stratification of $\XXX$ by means of three indicators: the degree of non-normality, the number and degrees of isolated irrational singularities and whether the irrational singularities are simply elliptic or cusps.
The interest in the third indicator, even though not necessary to understand the birational isomorphism type of the surface, comes from the relation with another stratification induced by the degeneration of mixed Hodge structures on $H^2(X)$ as defined by Green, Griffiths, Kerr, Laza and Robles \cite{GGR:2014,GGLR:201X,KR:201X,Robles:2016a,Robles:2016}.

We will see that all inhabited strata are of the expected dimension, but many of the numerically characterised strata decompose further into disjoint components; this is mostly reflected by the birational types of the minimal resolutions.
On the level of curves, the different components correspond to special configurations of the non-simple singularities.

Moreover, we will define a \emph{Hodge type} of our surfaces under investigation (Definition~\ref{def:Hodge type}).
For the stratification of the locus parametrising normal surfaces, the Hodge type is constant on the strata, as shown in Proposition~\ref{prop:hodge-diamond-classification}.
For the locus of non-normal surfaces, however, this is much more complicated, as we will indicate in Example~\ref{example:wild-Hodge type-on-non-normal1} and  Example~\ref{example:wild-Hodge type-on-non-normal2}.
This is why on the locus of non-normal surfaces, the stratification will not be fine enough to control the Hodge type.

The article is organised follows:
In Chapter~\ref{section:surface-geometry}, we investigate the geometry of the surfaces of interest, i.e., we prove that they are canonically double-covers of the plane, discuss the singularities they may have, prove some constraints on the possible birational isomorphism types and we study the mixed Hodge structure on second cohomology.

Going back and forth between curves and branched double-covers defines an isomorphism between our moduli space of interest $\XXX$ and the moduli space of certain plane curves; this will be the subject of Chapter~\ref{section:moduli-spaces}.
This moduli space has at least one more notable compactification, due to Hacking \cite{Hacking:2004}; in Chapter~\ref{section:further-remarks}, we will present a few remarks and questions in this direction.

Before that, in Chapter~\ref{section:stratification}, we will define and study the stratification; first, for the locus parametrising normal surfaces and then for the remaining part.
The full degeneration diagram would be incomprehensible, which is why we restrict the presentation to two fragments which give a sufficiently good idea of the situation.

To ease the flow of presentation, we have two appendices, one about the possible configurations of certain singularities on curves of degree $8$ or $6$, \ref{section:curves}, and some explanations concerning the Macaulay2-code, \ref{section:M2}.
The code can be obtained from a GitLab repository \cite{Anthes:2018}.
I also uploaded it to \url{http://arxive.org}, accompanying the source code to this article.

\subsection*{Acknowledgements}
This is my PhD thesis \cite{Anthes:thesis}.
To my supervisor, S\"onke Rollenske, I owe deep gratitude and appreciation.
He also spotted a mistake in \cite[Theorem~3.11]{Anthes:thesis} and generously provided help with fixing it for this slightly abbreviated version; cf. Remark~\ref{rmk:error}.
The thesis has benefited a lot from his continuous feedback, but also from conversations with Marco Franciosi, Andreas Krug, Colleen Robles and Michael L\"onne, for which I am grateful as well.
Finally, I am thankful for the funding which we have received from the Deutsche Forschungsgemeinschaft (DFG) through my supervisor's Emmy Noether-program 
\emph{Modulr\"aume und Klassifikation von algebraischen Fl\"achen und Nilmannigfaltigkeiten mit linksinvarianter komplexer Struktur.}

\subsection{Notations and conventions}
We work with schemes over the complex numbers $\CC$.
\emph{Varieties} are reduced and proper schemes of finite type over $\CC$ and a \emph{surface} is a purely two-dimensional variety in this sense.
A \emph{curve} is a possibly non-reduced projective scheme, purely of dimension $1$.
In some places, points (of schemes) are implicitly understood as $\CC$-rational points, but this is always clear from the context.
\subsubsection{Notation}
Let $X$ be a proper complex scheme of pure dimension $n$.
\begin{itemize}[nolistsep,label=--]
\item $\p_{g}(X) = h^{n}(\osh{X})$.
\item $\p_{a}(X) = (-1)^{n}(\chi(\osh{X})-1)$.
\item If $n = 1$, then $\deg(L) = \chi(L)-\chi(\osh{X})$ for all $L\in\Pic(X)$.
\item $\q(X) = h^1(\osh{X})$.
\item To avoid confusion between topological and holomorphic Euler characteristics, we use $\chi(-)$ only for coherent $\osh{X}$-modules and $\chi_{\textnormal{top}}(X)$ for the topological Euler characteristic of the analytification $X^{\textnormal{an}}$.
\item To ease notation, we use $H^*(X;\CC):=H^*(X^{\textnormal{an}};\CC)$ for the singular cohomology of the analytification.
\end{itemize}

\subsubsection{Semi-log-canonical varieties and pairs}
We briefly recall the definition of a semi-log-canonical pair (from Koll\'{a}r's exposition \cite[Chapter~5]{Kollar:2013}, see there for more details).
If a finite type scheme $X$ over $\CC$ satisfies the following two conditions, it is called \emph{demi-normal}.
\begin{enumerate}[label = \arabic*.]
\item $X$ satisfies Serre's condition $(S_{2})$, i.e., for every $x\in X$ we have
$$\depth_{\osh{X,x}}(\osh{X,x})\geq\min\{2,\dim(\osh{X,x})\}.$$
\item $X$ is regular or double normal crossing in codimension $1$, i.e., if $x\in X$ is the generic point of a sub-variety of codimension one, then either $\osh{X,x}$ is regular, or its completion $\osh{X,x}^\wedge$ with respect to the maximal ideal is isomorphic to the complete local ring $\CC[[x,y]]/(xy)$.
\end{enumerate}
For a demi-normal scheme $X$, with normalisation $\pi\colon \overline{X}\to X$, the \emph{conductor locus} $F:=\mathrm{supp}(\pi_{*}\osh{\overline{X}}/\osh{X})\subset X$ is purely one-codimensional, reduced and $\pi$ is generically a double-cover over $F$, as explained by Koll\'{a}r \cite[Section~5.1]{Kollar:2013}.
The ideal sheaf defining $F$, $\ann_{\osh{X}}(\pi_{*}\osh{\overline{X}}/\osh{X}) = \mathop{\mathcal{H}\hspace{-.3ex}\mathit{om}}_{\osh{X}}(\pi_{*}\osh{\overline{X}},\osh{X})$, is also an ideal in $\pi_{*}\osh{\overline{X}}$ which defines the conductor locus $\overline{F}\subset\overline{X}$ in the normalisation.
It is the reduced pre-image of $F$.

Note that a demi-normal scheme satisfies Serre's condition $(S_{2})$ and is Gorenstein at all points of codimension one, i.e., satisfies $(G_{1})$.
Therefore, there is a canonical sheaf $\omega_{X}$ and it is a divisorial sheaf which is locally free in codimension one.
In particular, we can choose a canonical Weil-divisor $K_{X}$ which is Cartier in codimension one.

A pair $(X,D)$ of a  variety $X$ and an effective $\QQ$-divisor $D$ on $X$ is \emph{semi-log-canonical} if $X$ is demi-normal, $F$ and the support of $D$ have no component in common, the divisor $K_{X}+D$ is $\QQ$-Cartier and the pair $(\overline{X},\pi_{*}^{-1}D+\overline{F})$ is log-canonical (cf.\ Koll\'{a}r \cite[Definition~2.8]{Kollar:2013}).

A variety $X$ is \emph{semi-log-canonical} if the pair $(X,0)$ is semi-log-canonical.
In particular, the canonical divisor $K_{X}$ is $\QQ$-Cartier then, i.e., $X$ is $\QQ$-Gorenstein.
A \emph{stable surface} is a projective, connected, semi-log-canonical surface $X$ whose canonical divisor $K_{X}$ is ample.
More generally, a \emph{stable log-surface} is a semi-log-canonical pair $(X,\Delta)$ where  $X$ is a connected and projective surface and such that $K_{X}+\Delta$ is ample.

We will also need the notion of Du Bois-singularities; see Koll\'{a}r \cite[Chapter~6]{Kollar:2013} for a concise introduction.
Since we will ultimately be concerned with Gorenstein stable surfaces, for our purposes, it is enough to note the following two facts.
For one, semi-log-canonical singularities are Du Bois, see Koll\'{a}r \cite[Corollary~6.32]{Kollar:2013} or Kov\'{a}cs, Schwede, Smith \cite[Theorem~4.16]{KSS:2010}.
Conversely, Du Bois-singularities which are demi-normal and Gorenstein are semi-log-canonical by Doherty \cite[Theorem~4.2]{Doherty:2008}.

\subsubsection{Stable surfaces and their normalisations---Koll\'{a}r's glueing}

Let $X$ be a stable surface with normalisation $\pi\colon \overline{X}\to X$ and conductor loci $F\subset X$ and $\olF\subset \overline{X}$.
Then $K_{\overline{X}}+\olF$ is an ample $\QQ$-Cartier divisor.
Moreover, the restriction of $\pi$ to $\overline{F}\to F$ is generically a double-cover and after passing to normalisations $\olF^\nu\to F^\nu$, it is the quotient of a Galois involution $\tau\colon \olF^\nu\to\olF^\nu$.
The surface $X$ can then be recovered from these data as the following diagram is a composition of push-outs:
$$
\begin{CD}
\olF^\nu @>\nu>> \olF @>>> \overline{X}\\
@VV{/\tau}V                         @VV\pi V          @VV\pi V\\
F^\nu @>\nu>> F @>>> X
\end{CD}
$$
This follows from Koll\'{a}r's Glueing Theorem \cite[Theorem~5.13]{Kollar:2013} and its proof there.
More precisely, this theorem captures when exactly the data $(\overline{X},\olF,\tau)$ arise from a stable surface $X$ (or a stable log-surface $(X,\Delta)$ in the presence of a boundary divisor $\overline{\Delta}$).
Moreover, in this correspondence, $X$ is Gorenstein if and only if the involution $\tau$ on $\olF^{\nu}$ induces a fixed-point free involution on the pre-images of the nodes of $\olF$, as shown by Franciosi, Pardini and Rollenske \cite[Addendum to Theorem~3.2]{FPR:2015}.

\subsubsection{Divisors on demi-normal varieties}\label{sec:divisors-on-demi-normal}
A ($\QQ$-)\emph{Weil divisor} $D$ on $X$ is a $\ZZ$-\ (respectively $\QQ$-)linear combination of integral sub-\-varieties of codimension one in $X$.
Its \emph{support} is the reduced union of all sub-varieties with non-trivial coefficient.
If $X$ is demi-normal, a divisor $D$ on $X$ is said to be \emph{well-behaved} if its support and the conductor locus $F\subset X$ do not share a common component.
An effective \emph{Cartier divisor} is a sub-scheme $D\subset X$ whose ideal sheaf $\osh{X}(-D)$ is invertible and $D$ is said to be \emph{almost-Cartier} (cf.\ Hartshorne \cite{Hartshorne:1994,Hartshorne:2007}), if the ideal sheaf is invertible at all points of codimension one, i.e., outside a closed sub-scheme of codimension at least two.

An effective almost-Cartier divisor  $D\subset X$ gives rise to an effective Weil divisor through $\sum_{C\subset X} \length(\osh{D,\eta_{C}})C$, where the sum runs through the integral closed sub-schemes $C\subset X$ and $\eta_{C}\in C$ is the generic point, the length being computed as $\osh{C,\eta_{C}}$-module.
We will, by abuse of language, call a Weil divisor \emph{almost-Cartier}, if it is the difference of two effective divisors arising from almost-Cartier divisors; furthermore, it is called Cartier if the corresponding almost-Cartier generalised divisor is Cartier.
As usual, a $(\QQ\text{-})$Weil divisor is called $\QQ$-Cartier if it has an integral multiple which is Cartier.
For example, if $L$ is a divisorial sheaf on a demi-normal scheme, a regular section $s\in H^0(X,L)$ gives rise to an effective almost-Cartier divisor $Z(s)$ with corresponding ideal sheaf $\mathrm{im}(s^\vee\colon L^\vee\to\osh{X})$.

\subsubsection{Numerical connectedness}
Recall that a Gorenstein curve $C$ is said to be \emph{numerically $m$-connected} if for every generically Gorenstein strict sub-curve $B\subset C$,
$\deg_{B}(\omega_{C}|_{B})-(2\p_{a}(B)-2)\geq m$
(cf.\ Catanese, Hulek, Franciosi, Reid \cite{CFHR:1999}).
This is a very useful generalisation of the classical notion numerical connectedness of curves on smooth surfaces.

\section{The geometry of the surfaces}
\label{section:surface-geometry}

In this first chapter we investigate the geometry of Gorenstein stable surfaces $X$ with $K_{X}^2 = 2$ and $\chi(\osh{X}) = 4$.
At first, we show that they all arise as double-covers of the plane, branched over some curve of degree $8$.
Then we identify the birational isomorphism types of the minimal resolutions, give characterisations of the singularities and we conclude with some results about the mixed Hodge structures on their second cohomology.

\begin{remark}\label{rmk:basic-numerics}
Let $X$ be a Gorenstein stable surface with $K_{X}^2=2$.
If $\chi(\osh{X}) \geq 4$, then the stable Noether inequality due to Liu and Rollenske \cite{LR-geo} implies $\p_{g}(X)<K_{X}^2+2$.
Thus, from
$$4 \leq \chi(\osh{X}) = 1-\q(X)+\p_{g}(X)\leq 4-\q(X)\leq 4$$
we conclude $\chi(\osh{X}) = 4$, $\q(X) = 0$ and $\p_{g}(X) = 3$.
Conversely, if $K_{X}^2 = 2$ and $\p_{g}(X)=3$, then $\q(X) = 0$ and so $\chi(\osh{X}) = 4$.
This will be shown in a manuscript in preparation by Franciosi, Pardini and Rollenske \cite{FPR:201X}.
\end{remark}

\subsection{The canonical linear system}
The aim of this section is to show that the canonical map of a Gorenstein stable surface $X$ satisfying $K_{X}^2 = 2$ and $\chi(\osh{X}) = 4$ is a double-cover of $\PP^2$ branched over an octic.

If $X$ is a reducible Gorenstein stable surface, it may happen that some non-trivial section of the canonical bundle is not regular; that is, it could vanish on a component.
In the case under consideration, however, this does not happen:

\begin{lemma}\label{lemma:general-canonical-curve}
Assume $X$ is a Gorenstein stable surface with $K_{X}^2 = 2$.
Then $X$ has at most two components and every non-trivial section of $\omega_{X}$ is regular.
If, in addition, $\p_{g}(X)\geq 2$, then $|\omega_{X}|$ has no fixed part and a general effective canonical divisor is well-behaved and reduced.
\end{lemma}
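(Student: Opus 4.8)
The plan is to treat the four assertions in turn, the first two for a possibly reducible $X$ and the last two under the extra hypothesis $\p_{g}(X)\geq 2$.

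For the bound on the number of components I would use that $X$, being Gorenstein, has $K_{X}$ Cartier, so $\omega_{X}$ is an ample line bundle. If $X_{1},\dots,X_{r}$ are the (reduced, two-dimensional) irreducible components, then restriction gives an ample Cartier divisor $K_{X}|_{X_{i}}$ on each projective surface $X_{i}$, and since $X$ is reduced of pure dimension two one has $K_{X}^{2}=\sum_{i}(K_{X}|_{X_{i}})^{2}$. Each summand is the self-intersection of an ample \emph{integral} divisor on a surface, hence a positive integer, so $2=K_{X}^{2}\geq r$ and $r\leq 2$. In particular, if $r=2$ then $(K_{X}|_{X_{1}})^{2}=(K_{X}|_{X_{2}})^{2}=1$.

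For regularity, note that $X$ satisfies $(S_{2})$, so the associated points of $\osh{X}$ are exactly the generic points of the components; thus a non-trivial section $s$ of $\omega_{X}$ fails to be regular precisely when it vanishes identically on one component, say $X_{1}$, which forces $r=2$. Writing $C=X_{1}\cap X_{2}$ (contained in the conductor), the ideal sheaf of $X_{1}$ is $\osh{X_{2}}(-C)$, so $s$ lives in $H^{0}(X_{2},\omega_{X}|_{X_{2}}(-C))$; adjunction along the conductor, $\omega_{X}|_{X_{2}}\cong\omega_{X_{2}}(C)$, identifies this with $H^{0}(X_{2},\omega_{X_{2}})$. I would then rule this out numerically: if $\omega_{X_{2}}$ had a section, then $K_{X_{2}}$ would be effective, and combining $K_{X}|_{X_{2}}=K_{X_{2}}+C$ ample with $(K_{X}|_{X_{2}})^{2}=1$ forces, on intersecting against the ample class, $K_{X_{2}}\equiv 0$ and $C^{2}=1$; a surface with numerically trivial canonical class carrying an ample Cartier curve of odd self-intersection is excluded by the parity of its intersection form. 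This case analysis on the complementary component is the step I expect to be the main obstacle, since it must also accommodate the possibility that $X_{2}$ is itself non-normal.

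Now assume $\p_{g}(X)\geq 2$ and write $K_{X}\sim M+\Phi$ with $M$ the moving part and $\Phi$ the fixed part; by regularity every member is an honest effective Cartier divisor. Suppose $\Phi\neq 0$. Since $K_{X}$ is ample and Cartier, $K_{X}\cdot M$ and $K_{X}\cdot\Phi$ are positive integers summing to $K_{X}^{2}=2$, hence both equal $1$. The Hodge index inequality $(K_{X}\cdot M)^{2}\geq K_{X}^{2}\cdot M^{2}$ (valid on a resolution) then gives $M^{2}\leq\tfrac{1}{2}$, so $M^{2}=0$. Applying adjunction to the general member $M$, a Gorenstein curve, yields $\deg\omega_{M}=(K_{X}+M)\cdot M=K_{X}\cdot M+M^{2}=1$, contradicting that $\deg\omega_{M}=2\p_{a}(M)-2$ is even. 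Hence $\Phi=0$ and $|\omega_{X}|$ has no fixed part. Finally, with no fixed part the divisorial base locus of $|\omega_{X}|$ is empty, so no prescribed curve, in particular no component of the conductor $F\subset X$, lies in the general member, which is therefore well-behaved (a finite intersection of dense open conditions handles all components of $F$ at once). Reducedness of the general member then follows from Bertini's theorem in characteristic zero, since any non-reduced behaviour would occur along a divisor forced into the codimension-two base locus.
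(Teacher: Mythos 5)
Your first two steps are sound and essentially the paper's: the bound $s\leq 2$ via $2=K_X^2=\sum_i(K_X|_{X_i})^2$, and the reduction of regularity, through conductor adjunction/linkage (sections of $\omega_X$ vanishing on $X_1$ are identified with $H^0(X_2,\omega_{X_2})$), to showing $\p_{g}(X_2)=0$; the paper performs the same reduction, but on the normalisation, reducing to $H^0(\overline{X}_i,\omega_{\overline{X}_i})=0$. The gap sits exactly where you predicted it. Your exclusion of sections of $\omega_{X_2}$ rests on ``parity of the intersection form'', but $X_2$ is a possibly non-normal, a priori not even Cohen--Macaulay surface, and for such a surface parity (i.e.\ $D\cdot(D-K_{X_2})$ even for $D$ Cartier) is not a formal fact: it is a consequence of Riemann--Roch plus Serre duality, which requires $X_2$ to be $S_2$ and Gorenstein, or of the Liu--Rollenske Riemann--Roch for slc surfaces --- and a component of an slc surface need not be slc. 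Nor can you just ``pass to a resolution'': on a resolution $Y\to X_2$ one has $K_Y\neq f^{*}K_{X_2}$, the difference involving the self-conductor of $X_2$ and the discrepancies, so the pulled-back self-intersection $1$ no longer visibly violates adjunction on $Y$. (The step can in fact be salvaged: in your contradiction branch $\omega_{X_2}\cong\osh{X_2}$, and since dualizing sheaves are automatically $S_2$, this forces $X_2$ to be $S_2$, hence Cohen--Macaulay, hence Gorenstein, after which Riemann--Roch gives the parity contradiction with $C^2=1$, $K_{X_2}\cdot C=0$; but none of this is in your write-up, and it is precisely the missing content. The paper instead works with the lc pair $(\overline{X}_2,\olF_2)$, where it can quote the Franciosi--Pardini--Rollenske classification of stable log surfaces with $(K+F)^2=1$, or run Riemann--Roch on the \emph{normal} surface $\overline{X}_2$.)

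There is a second gap of the same nature in your fixed-part argument. Writing $K_X\sim M+\Phi$, both the quantity $M^2$ in the Hodge index inequality and the adjunction formula $\deg\omega_M=(K_X+M)\cdot M$ presuppose that $M$ (equivalently $\Phi$) is $\QQ$-Cartier. The fixed part of a linear system on a singular, possibly non-normal surface is in general only a Weil divisor: $M^2$ is then undefined, ``valid on a resolution'' does not transfer back because Weil divisors do not pull back, and adjunction for a non-Cartier curve acquires correction terms; running everything upstairs on a resolution re-creates the $K_Y\neq f^{*}K_X$ problem, so the intended parity contradiction $K_X\cdot M+M^2=1$ evaporates. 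The paper avoids intersection theory of the moving part entirely: regularity of sections makes the restriction maps $|\omega_X|\to|\omega_X|_{X_i}|$ injective, while ampleness with self-intersection $1$ of $\omega_{\overline{X}_i}(\olF_i)$ forces every member of the restricted system to be reduced and irreducible, so a fixed curve would make the restricted system constant, contradicting $\dim|\omega_X|=\p_{g}(X)-1\geq 1$. Your concluding points --- well-behavedness of the general member because no conductor component can be divisorially fixed, and reducedness via generic reducedness plus $(S_1)$ --- are correct and agree in substance with the paper.
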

\begin{proof}
We consider the decomposition $X = \bigcup_{i=1}^s X_{i}$ into irreducible components and assume that $s\geq 2$.
Then there is a corresponding decomposition of the normalisation into disjoint components $\overline{X} = \coprod_{i=1}^s\overline{X}_{i}$ where $\overline{X}_{i}$ is the component over $X_{i}$.
The conductor locus $\overline{F}\subset\overline{X}$ accordingly decomposes as $\overline{F} = \bigcup_{i=1}^s\olF_{i}$, $\olF_{i}\subset\overline{X}_{i}$.

Since $\omega_{X}$ is ample, $2 = \omega_{X}^2 = \sum_{i=1}^s(\omega_{X}|_{X_{i}})^2$ is a sum of $s$ positive integers, so that we have to have $s=2$ and $\omega_{X}|_{X_{1}}^2 = \omega_{X}|_{X_{2}}^2 = 1$.
In particular, the invertible sheaves $\pi^{*}\omega_{X}|_{\overline{X}_{i}}\cong\omega_{\overline{X}_{i}}(\overline{F}_{i})$, $i=1,2$, are ample with $(\pi^{*}\omega_{X}|_{\overline{X}_{i}})^2 = 1$.
This furthermore implies that every member of $|\omega_{\overline{X}_{i}}(\overline{F}_{i})|$ is reduced and irreducible.

We now show that every non-trivial section of $\omega_{X}$ is regular, i.e., that the natural maps $p_{i}\colon H^0(X,\omega_{X})\to H^0(\overline{X}_{i},\omega_{\overline{X}_{i}}(\overline{F}_{i}))$, $i=1,2$, given by pull-back and restriction, are injective.
Since $\ker(p_{1})$ and $\ker(p_{2})$ only have the trivial element in common, it suffices to show that they agree.
To this end, note that the restriction of $p_{1}$, to $\ker(p_{2})$ factors through the inclusion $H^0(\overline{X}_{1},\omega_{\overline{X}_{1}})\to  H^0(\overline{X}_{1},\omega_{\overline{X}_{1}}(\overline{F}_{1}))$ (and likewise for $p_{2}$ in place of $p_{1}$).
Thus, it suffices to prove $H^0(\overline{X}_{i},\omega_{\overline{X}_{i}}) = 0$ for both, $i=1,2$.
But $(\overline{X}_{i},\olF_{i})$ is a stable log-pair with $\omega_{\overline{X}_{i}}(\olF_{i})^2 = 1$ and these are classified by Franciosi, Pardini and Rollenske \cite[Theorem~1.1]{FPR:2015}.
From this result it follows that $\p_{g}(\overline{X}_{i}) = 0$, as claimed.
Alternatively, it can be shown that a non-trivial section of $\omega_{X}$ had to be nowhere vanishing, hence $\olF_{i}^2 = \omega_{\overline{X}_{i}}(\olF_{i})^2 = 1$, in contradiction with Riemann-Roch.

It remains to show that if $\p_{g}(X)\geq 2$, then $|\omega_{X}|$ has no fixed part and a general member is reduced and well-behaved.
The generic fibre of a morphism with reduced source is reduced (see the Stacks Project \cite[\href{https://stacks.math.columbia.edu/tag/054Z}{Tag 054Z}]{stacks-project}).
Thus, if $\p_{g}(X)\geq 2$, a general member of $|\omega_{X}|$ is generically reduced, hence reduced.
Moreover, an effective and reduced member of the canonical linear system is well-behaved since a section of an invertible sheaf vanishes along the conductor with even multiplicity.

Since every member of $|\omega_{X}|_{X_{i}}|$ is irreducible, if the linear system $|\omega_{X}|$ would fix a curve $C\subset X_{i}$, then the restriction map $|\omega_{X}|\to |\omega_{X}|_{X_{i}}|$ could only be constant, mapping everything to $C$.
But assuming $\dim(|\omega_{X}|) = \p_{g}(X)-1 \geq 1$, the injective map $|\omega_{X}|\to |\omega_{X}|_{X_{i}}|$ is not constant.
Thus, $|\omega_{X}|$ cannot fix a curve.
This completes the proof.
\end{proof}

The following examples show that a special canonical curve could very well be non-reduced or non-well-behaved.

\begin{examples}~
\begin{enumerate}[label=\arabic*.]
\item \emph{(A canonical curve in the conductor)} When we obtain $X$ as a union of two copies of $\PP^2$ along a quartic $\overline{F}\in |\osh{\PP^2}(4)|$ with at worst nodal singularities, then $X$ is a Gorenstein stable surface with $K_{X}^2=2$ and $\chi(\osh{X}) = 4$ and the members of the canonical linear system are the unions of compatible lines in either plane.
Therefore, if $\overline{F}$ is a union of a general line and a smooth cubic, then the corresponding line in the conductor is a member of the canonical linear system.
\item \emph{(A well-behaved, non-reduced canonical curve)} Let $f\colon X\to\PP^2$ be a double-cover branched along the union of a smooth septic and a general line (that is, meeting the septic transversely).
Then $X$ is a normal Gorenstein stable surface with $K_{X}^2 = 2$ and $\chi(\osh{X}) = 4$ and the (non-reduced) pre-image of the line occurs as a member of the canonical linear system on $X$.
\end{enumerate}
\end{examples}

In the following result we collect the most basic numerical properties of an arbitrary member of the canonical linear system.
\begin{lemma}\label{lemma:on-canonical-curves}
Assume that $X$ is a Gorenstein stable surface satisfying $K_{X}^2=2$ and $\chi(\osh{X}) = 4$.
Let $C\in|\omega_{X}|$ be a canonical curve.
Then the following holds.
\begin{enumerate}[label=\alph*)]
\item The curve $C$ is Gorenstein and has at most two components.
\item The identities $h^0(\osh{C}) = 1$ and $\chi(\osh{C}) = -2$ hold. In particular, $C$ is connected and $ h^0(\omega_{C}) = p_{a}(C) = 3$.
\item The invertible sheaf $L := \omega_{X}|_{C}\in\Pic(C)$ is a square root of $\omega_{C}$, i.e., $L^{2}\cong\omega_{C}$, and we have $\chi(L)=0$ and $\deg(L)=h^{0}(L)=2$.
\end{enumerate}
\end{lemma}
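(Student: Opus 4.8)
The plan is to read off all three parts from the description of $C$ as a Cartier divisor. By \ref{rmk:basic-numerics} we have $\p_{g}(X)=3$ and $\q(X)=0$, and since $X$ is Gorenstein and stable the sheaf $\omega_{X}$ is invertible and ample. By \ref{lemma:general-canonical-curve} the section $s\in H^{0}(\omega_{X})$ cutting out $C$ is regular, so $C=Z(s)$ is an effective Cartier divisor with $\osh{X}(C)\cong\omega_{X}$. For (a), the Gorenstein property is local and immediate: $\osh{C}$ is locally the quotient of the Gorenstein ring $\osh{X,x}$ by a single non-zero-divisor. To bound the components I would write $C=\sum_{j}m_{j}C_{j}$ with distinct integral components $C_{j}$ and multiplicities $m_{j}\geq1$; ampleness of $\omega_{X}$ makes each $\omega_{X}\cdot C_{j}$ a positive integer, so $\sum_{j}m_{j}(\omega_{X}\cdot C_{j})=\omega_{X}\cdot C=\omega_{X}^{2}=2$ leaves room for at most two components.

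Next I would put $L:=\omega_{X}|_{C}$ and apply adjunction. As $C$ is Cartier with $\osh{X}(C)\cong\omega_{X}$, we get $\omega_{C}\cong(\omega_{X}\otimes\osh{X}(C))|_{C}\cong\omega_{X}^{\otimes2}|_{C}\cong L^{\otimes2}$, which is the first claim of (c), while $\deg L=\omega_{X}\cdot C=\omega_{X}^{2}=2$ by intersection theory. Hence $\deg\omega_{C}=2\deg L=4$; since $C$ is Gorenstein this reads $2\p_{a}(C)-2=4$, so $\p_{a}(C)=3$ and $\chi(\osh{C})=1-\p_{a}(C)=-2$, which is the bulk of (b). The curve degree convention then gives $\chi(L)=\deg L+\chi(\osh{C})=2-2=0$, completing the numerical assertions of (c).

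It remains to compute two spaces of sections. For $h^{0}(L)=2$ I would twist the structure sequence of $C$ by $\omega_{X}$ to obtain $0\to\osh{X}\xrightarrow{\,s\,}\omega_{X}\to L\to0$; since $h^{0}(\osh{X})=1$, $h^{0}(\omega_{X})=\p_{g}(X)=3$ and $h^{1}(\osh{X})=\q(X)=0$, the long exact sequence yields $h^{0}(L)=3-1=2$. For the connectedness statement $h^{0}(\osh{C})=1$ I would instead use $0\to\omega_{X}^{-1}\to\osh{X}\to\osh{C}\to0$, in which $H^{0}(\omega_{X}^{-1})=0$ by ampleness, so that $h^{0}(\osh{C})=1+h^{1}(\omega_{X}^{-1})$. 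The main obstacle is exactly the vanishing $h^{1}(\omega_{X}^{-1})=0$, which is not naive Kodaira vanishing because $X$ is singular; I would deduce it from the fact recorded earlier that semi-log-canonical singularities are Du~Bois, together with the Du~Bois form of Kodaira vanishing $H^{i}(X,A^{-1})=0$ for $i<\dim X$ and $A$ ample, applied to $A=\omega_{X}$. This gives $h^{0}(\osh{C})=1$, whence $C$ is connected, and Serre duality on the Gorenstein curve $C$ yields $h^{0}(\omega_{C})=h^{1}(\osh{C})=h^{0}(\osh{C})-\chi(\osh{C})=3=\p_{a}(C)$, which finishes (b).
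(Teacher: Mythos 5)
Your proof is correct and all the ingredients you invoke are available in the paper's setting. It follows the same broad strategy as the paper (regularity of the section via Lemma~\ref{lemma:general-canonical-curve}, adjunction, a vanishing theorem, Serre duality), but differs in two mechanical points worth noting. First, the paper obtains $\chi(\osh{C})=-2$ from the Riemann--Roch formula for Cartier divisors on semi-log-canonical surfaces (Liu--Rollenske), computing $\chi(\osh{X}(-C))-\chi(\osh{X})=\tfrac12(-C)(-C-K_X)$; you instead stay on the curve, deducing $\deg\omega_C=2\deg L=4$ from adjunction and then $2\p_a(C)-2=4$ from Serre duality on the Gorenstein curve $C$. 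Your route is more elementary in that it replaces a surface-level Riemann--Roch for singular surfaces by curve-level duality, at the cost of needing the standard compatibility $\deg(\omega_X|_C)=\omega_X\cdot C$ (which the paper itself uses in part (a), so this is unobjectionable). Second, for $h^0(\osh{C})=1$ the paper works with the sequence $0\to\omega_X\to\omega_X^2\to\omega_C\to0$ and applies Kodaira vanishing for slc surfaces to get $h^1(\omega_C)=h^2(\omega_X)$, then dualises on both $X$ and $C$; you apply the Serre-dual form of the same vanishing, $H^1(X,\omega_X^{-1})=0$, directly to the ideal-sheaf sequence $0\to\omega_X^{-1}\to\osh{X}\to\osh{C}\to0$. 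These are equivalent via Serre duality on the Gorenstein surface $X$, and the vanishing you quote is exactly what the cited Du~Bois/slc Kodaira vanishing provides. You also make the computation $h^0(L)=2$ explicit via the long exact sequence and $\q(X)=0$, which the paper leaves implicit. No gaps.
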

\begin{proof}
\begin{enumerate}[label=\alph*),leftmargin=*]
\item Since $X$ is Gorenstein, every canonical curve $C$ on $X$ is Gorenstein. Furthermore, $K_{X}C = 2$ and $K_{X}$ has positive degree on each component of $C$, so that there can be at most two such.
\item By adjunction, $\omega_{C}$ is isomorphic to the cokernel of the inclusion $\omega_{X}^2(-C)\to \omega_{X}^2$.
The relevant fragment of the associated exact cohomology sequence, together with the Kodaira Vanishing Theorem for semi-log-canonical surfaces (Liu and Rollenske \cite[Proposition 3.1]{LR-pluri}) shows $h^1(\omega_{C}) = h^2(\omega_{X})$.
Serre duality implies $h^0(\osh{C}) = h^0(\osh{X}) = 1$ and applying the Riemann-Roch Formula for Cartier divisors on semi-log-canonical surfaces due to Liu and Rollenske \cite[Theorem 3.1]{LR-geo}  gives
$
	-\chi(\osh{C}) = \chi(\osh{X}(-C))-\chi(\osh{X}) = \tfrac{1}{2}(-C)(-C-K_{X}) = K_{X}^2 = 2.
$
Therefore, $\chi(\osh{C}) = -2 $ and $p_{a}(C) = 1-\chi(\osh{C}) =3$.
Finally, from $h^0(\osh{C}) = 1$ and Serre duality we conclude $p_{a}(C) = h^{1}(\osh{C}) = h^0(\omega_{C})$.
\item That $L$ is a square-root of $\omega_{C}$ follows from adjunction.
Since $L$ is the cokernel of the inclusion $\osh{X}\cong \omega_{X}(-C)\to \omega_{X}$ we get $\chi(L) = \chi(\omega_{X})-\chi(\osh{X}) = 0$ by Serre duality.
This readily implies $\deg(L) = p_{a}(C)-1 = 2$.
\end{enumerate}
This completes the proof.
\end{proof}

We aim to show:
\begin{proposition}\label{prop:C-is-honestly-hyperelliptic}
Let $C$ be a general, i.e., well-behaved and reduced, canonical curve on a Gorenstein stable surface $X$ satisfying $K_{X}^2 = 2$ and $\chi(\osh{X})=4$.
Then $C$ is numerically $4$-connected and honestly hyperelliptic, the double-cover $C\to\PP^1$ being defined by the sections of $\omega_{X}|_{C}$.
Moreover, if $C$ is reducible, then its two components are smooth rational curves.
\end{proposition}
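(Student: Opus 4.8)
The plan is to read everything off the numerical data recorded in \ref{lemma:on-canonical-curves}: $C$ is a connected Gorenstein curve of arithmetic genus $p_a(C)=3$ with at most two components, and $L:=\omega_X|_C$ is a square root of $\omega_C$ with $\deg L=2$ and $h^0(L)=2$ (hence $h^1(L)=2$). I would first settle the reducible case, which simultaneously yields numerical $4$-connectedness and the rationality of the components, and then deduce honest hyperellipticity from Clifford's theorem for numerically connected Gorenstein curves \cite{CFHR:1999}.

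If $C$ is irreducible there are no strict subcurves, so $C$ is numerically $m$-connected for every $m$ and there is nothing to show about components. Suppose then $C=C_1\cup C_2$. Since $\omega_X$ is ample, each $\deg_{C_i}L=\omega_X\cdot C_i$ is a positive integer, and as they sum to $\deg L=2$ we get $\deg_{C_1}L=\deg_{C_2}L=1$. Using $L^2\cong\omega_C$ together with the adjunction isomorphism $\omega_C|_{C_i}\cong\omega_{C_i}(C_1\cap C_2)$, I would compute
$$2=\deg_{C_i}\omega_C=2p_a(C_i)-2+C_1\cdot C_2,$$
so that $p_a(C_1)=p_a(C_2)=:g$ and $C_1\cdot C_2=4-2g$. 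Connectedness of $C$ forces $C_1\cdot C_2\ge 1$, which already excludes $g\ge 2$.

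The crux is ruling out $g=1$, and I expect this to be the main obstacle. In that case each $C_i$ is a Gorenstein curve of arithmetic genus $1$ carrying the degree-one bundle $L|_{C_i}$, and a short Riemann--Roch and Serre-duality computation gives $h^0(L|_{C_i})=1$, the unique section vanishing on an effective divisor of degree $1$. Feeding this into the Mayer--Vietoris sequence
$$0\to L\to L|_{C_1}\oplus L|_{C_2}\to L|_{C_1\cap C_2}\to 0$$
and taking cohomology, the source of the restriction map $\rho\colon H^0(L|_{C_1})\oplus H^0(L|_{C_2})\to H^0(L|_{C_1\cap C_2})$ is two-dimensional, and $H^0(L)=\ker\rho$. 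Since $h^0(L)=2$ is already known, $\rho$ would have to vanish; but $\rho=0$ would force the generator of $H^0(L|_{C_1})$ to vanish along the length-two scheme $C_1\cap C_2$ (here $C_1\cdot C_2=4-2g=2$), which is impossible for a nonzero section of a degree-one bundle. This contradiction leaves only $g=0$: both components are smooth rational curves and $C_1\cdot C_2=4$. As the sole proper decomposition is $C=C_1+C_2$ and $\deg_{C_i}\omega_C-(2p_a(C_i)-2)=C_1\cdot C_2=4$, the curve $C$ is numerically $4$-connected.

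Finally, I would deduce honest hyperellipticity. With $C$ now known to be numerically connected, Clifford's theorem for Gorenstein curves \cite{CFHR:1999} applies to the special bundle $L$ and gives $h^0(L)\le\tfrac12\deg L+1=2$; since $h^0(L)=2$, equality holds. Because $0<\deg L=2<4=2p_a(C)-2$, the bundle $L$ is neither $\osh{C}$ nor $\omega_C$, so the equality case of Clifford's theorem forces $C$ to be honestly hyperelliptic with $L$ the pullback of $\osh{\PP^1}(1)$ along a degree-two map. Concretely, $|L|$ is base-point free --- a base point would yield a degree-one bundle $L(-p)$ with $h^0=2$, again violating Clifford --- so it defines a morphism $\phi\colon C\to\PP^1$ with $\phi^*\osh{\PP^1}(1)=L$; no component is contracted since $\deg_{C_i}L>0$, whence $\phi$ is finite of degree $\deg L=2$. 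Thus the double-cover $C\to\PP^1$ cut out by the sections of $\omega_X|_C$ exhibits $C$ as honestly hyperelliptic, completing the proof.
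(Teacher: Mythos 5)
Your argument is correct in substance, but it reaches the crux --- the reducible case --- by a genuinely different route from the paper. You pin down the components intrinsically: from $L^{2}\cong\omega_{C}$ and $\deg_{C_{i}}L=1$ you get $p_{a}(C_{1})=p_{a}(C_{2})=g$ and $C_{1}\cdot C_{2}=4-2g$, and you kill $g=1$ by the Mayer--Vietoris section count ($\rho=0$ would force the generator of $H^{0}(L|_{C_{1}})$ to vanish on a length-two scheme). The paper instead leans on the surface: since $|K_{X}|$ has no fixed part (\ref{lemma:general-canonical-curve}), each $h^{0}(L|_{C_{i}})\geq 1$, and $h^{0}(L|_{C_{i}})=1$ is excluded because a section of $L$ vanishing on $C_{1}$ would restrict on $C_{2}$ to a nonzero section of a degree-one sheaf vanishing on the separating conductor, whose length must be even; then \ref{lemma:degree-one-line-bundle-gives-pp1} yields $(C_{i},L|_{C_{i}})\cong(\PP^{1},\osh{\PP^1}(1))$, numerical $4$-connectedness, and global generation via the Curve Embedding Theorem of \cite{CFHR:1999}. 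Your version buys self-containedness (only the numerics of \ref{lemma:on-canonical-curves} are used, not the absence of a fixed part), and the $g=1$ exclusion is a clean substitute for the parity-of-the-conductor trick.

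Two spots in your final paragraph should be tightened. First, the ``equality case of Clifford'' for numerically connected Gorenstein curves is not available in \cite{CFHR:1999} in the form you invoke it; fortunately you do not need it, since your direct construction ($|L|$ base-point free, no component contracted, hence a finite degree-two morphism with $\phi^{*}\osh{\PP^1}(1)=L$) already establishes honest hyperellipticity by definition. Second, the base-point-freeness step ``a base point yields $L(-p)$ of degree one with $h^{0}=2$'' needs care: at a singular point $L\otimes\mathfrak{m}_{p}$ is not invertible (one must use the torsion-free version of Rosenlicht's Clifford inequality, as the paper implicitly does in \ref{lemma:degree-one-line-bundle-gives-pp1}), and on the reducible $C$ a naive Clifford inequality can fail without degree hypotheses on subcurves. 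In the reducible case it is cleaner to observe that $H^{0}(L)\to H^{0}(L|_{C_{i}})\cong H^{0}(\osh{\PP^1}(1))$ is an isomorphism (its kernel is $H^{0}(\osh{\PP^1}(1-4))=0$), so $|L|$ restricts to the full, base-point-free pencil on each component.
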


In the proof, we will use the following result, which is well known in the smooth case.
In the present version, it is presumably also well known to some experts.
It follows from Rosenlicht's version of the Clifford Inequality for singular surfaces; see Rosenlicht \cite[Theorem~16]{Rosenlicht:1952} for the original proof or Kleiman and Martins \cite[Theorem~3.1]{KleimanMatrins:2009} for a modern account and further references.
\begin{lemma}\label{lemma:degree-one-line-bundle-gives-pp1}
Let $C$ be a reduced and irreducible Cohen-Macaulay curve.
An invertible sheaf $L\in\Pic(C)$ of degree one has at most two linearly independent global sections and if there are in fact two such, then $L$ is globally generated and the associated morphism $\phi_{|L|}\colon C\to \PP^1$ is an isomorphism.
%
\end{lemma}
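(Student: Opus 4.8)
The plan is to combine the Clifford inequality for integral singular curves with Riemann--Roch, reducing the ``two sections'' case to the case $C\cong\PP^1$. First I would bound $h^0(L)$ from above by splitting into two cases according to whether $L$ is special, i.e.\ whether $h^1(L)\neq 0$. If $h^1(L)=0$, then, since $C$ is reduced and connected we have $h^0(\osh{C})=1$, hence $\chi(\osh{C})=1-p_{a}(C)$, and Riemann--Roch on the Cohen--Macaulay curve gives
$$
h^0(L)=\chi(L)=\deg(L)+\chi(\osh{C})=1+(1-p_{a}(C))=2-p_{a}(C)\le 2,
$$
using $p_{a}(C)\ge 0$. If instead $h^1(L)\neq 0$ and $h^0(L)>0$, then $L$ is special and the Clifford inequality in Rosenlicht's singular form (Rosenlicht \cite[Theorem~16]{Rosenlicht:1952}, Kleiman--Martins \cite[Theorem~3.1]{KleimanMatrins:2009}) yields $h^0(L)\le \tfrac12\deg(L)+1=\tfrac32$, so $h^0(L)\le 1$. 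In either case $h^0(L)\le 2$, which settles the first assertion.

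Next I would analyse the equality case $h^0(L)=2$. By the dichotomy above this forces the non-special case, and then the Riemann--Roch computation $2=h^0(L)=2-p_{a}(C)$ gives $p_{a}(C)=0$. It remains to recall that a reduced, irreducible projective curve of arithmetic genus zero is smooth rational: writing $p_{a}(C)=g(\widetilde{C})+\sum_{P}\delta_{P}$ for the normalisation $\nu\colon\widetilde{C}\to C$, non-negativity of $g(\widetilde{C})$ and of the local $\delta$-invariants forces $g(\widetilde{C})=0$ and $\delta_{P}=0$ for every $P$, so $\nu$ is an isomorphism and $C\cong\PP^1$.

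Finally, with $C\cong\PP^1$ and $\deg(L)=1$ one has $L\cong\osh{\PP^1}(1)$, which is globally generated with exactly two sections, and $\phi_{|L|}\colon C\to\PP^1$ is the identity up to an automorphism of $\PP^1$, in particular an isomorphism; this closes the argument.

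The main obstacle is invoking the Clifford inequality correctly in the singular, possibly non-Gorenstein setting: one must check that the cited version applies to an honest line bundle on a merely reduced irreducible Cohen--Macaulay curve, and that speciality is controlled purely through the Euler characteristic $h^1(L)=h^0(L)-\chi(L)$ rather than through Serre duality and $\omega_{C}$. This is precisely why I separate the special and non-special cases and extract the identity $h^0(L)=2-p_{a}(C)$ directly from Riemann--Roch, so that no hypothesis beyond Cohen--Macaulayness is needed.
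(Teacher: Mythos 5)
Your proof is correct and follows essentially the same route as the paper: Rosenlicht's Clifford inequality for singular curves rules out the special case, and then Riemann--Roch forces $p_{a}(C)\leq 0$, hence $C\cong\PP^{1}$. Your write-up is somewhat more explicit about the case division and about why $p_{a}(C)=0$ implies smoothness, but the underlying argument is identical.
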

\begin{proof}
Any $L$ with $h^{0}(L)\geq 2$ and $\deg(L)=1$ violates Clifford's inequality for singular curves $2(h^0(L)-1)\leq \deg(L)$; thus, anything but $h^1(L) = 0$ would lead to a contradiction.
But then $\chi(L) = h^0(L)\geq 2$ and so $\p_{a}(C) = 1-\chi(\osh{C}) = 1+\deg(L)-\chi(L)\leq 0$.
Hence, $C\cong \PP^{1}$ and $h^0(L) = 2$.
\end{proof}

\begin{proof}[Proof of Proposition~\ref{prop:C-is-honestly-hyperelliptic}]
Recall from the earlier Lemmas~\ref{lemma:general-canonical-curve} and~\ref{lemma:on-canonical-curves} that a general member $C\in|K_{X}|$ is Gorenstein, reduced, well-behaved and has $\p_{a}(C) = 3$.
We showed, furthermore, that $L := \omega_{X}|_{C}$ is a square-root of $\omega_{C}$ with $\deg(L) = h^{0}(L) = 2$.

If $C$ is irreducible, then it is clearly numerically $4$-connected and if $L$ were not globally generated, say at $x\in C$, then $L(-x)$ were of degree one with two linearly independent sections and so we had to have $C\cong \PP^1$ by Lemma~\ref{lemma:degree-one-line-bundle-gives-pp1}, in contradiction with $\p_{a}(C) = 3$.
Thus, if $C$ is irreducible, $\phi_{|L|}\colon C\to \PP^1$ is a morphism of degree two, as claimed.

If $C$ is reducible, then $C = C_{1}\cup C_{2}$ with $\deg(L|_{C_{1}}) = \deg(L|_{C_{2}}) = 1$.
Below, we will show that $h^0(L|_{C_{1}}), h^0(L|_{C_{2}}) \geq 2$.
Assuming this for the moment, it follows that $(C_{i},L|_{C_{i}})\cong(\PP^1,\osh{\PP^1}(1))$ for both $i=1,2$, by Lemma~\ref{lemma:degree-one-line-bundle-gives-pp1}.
In particular, $\deg(\omega_{C}|_{C_{i}})-(2\p_{a}(C_{i})-2) = 4$ for both $i=1,2$.
Thus, $C$ is numerically $4$-connected and the Curve Embedding Theorem due to Catanese, Franciosi, Hulek and Reid \cite[Theorem~1.1]{CFHR:1999} implies that $L$ is globally generated.

It remains to show that $h^0(L|_{C_{1}}), h^0(L|_{C_{2}}) \geq 2$.
Since $|K_{X}|$ has no fixed part, every component admits a non-trivial section of $L$; thus, $h^0(L|_{C_{1}}), h^0(L|_{C_{2}}) \geq 1$.
If we had $h^0(L|_{C_{1}}) = 1$, then we had to have a non-trivial section of $L$ vanishing on all of $C_{1}$.
But then the restriction of this section to $H^0(L|_{C_{2}})$ were non-trivial and vanishing on the separating conductor\footnote{The \emph{separating conductor} is the conductor locus of the partial normalisation $C_{1}\amalg C_{2}\to C$. Actually, in this particular case, where we know a-posteriori that $C_{1}\cong C_{2}\cong \PP^1$, this partial normalisation is already the full normalisation and so we are talking about the usual conductor locus.}, which had to have length $\deg(L|_{C_{2}}) = 1$ then, in contradiction with the fact that  in our case the separating conductor has to have even length: Since $C$ is Gorenstein, the length of the separating conductor on $C_{i}$ is precisely $\deg(\omega_{C}|_{C_{i}})-(2\p_{a}(C_{i})-2) = 2-(2\p_{a}(C_{i})-2)$, an even number.
Hence, $h^0(L|_{C_{1}})\geq 2$ and by the same argument for $C_{2}$ we also get $h^0(L|_{C_{2}})\geq 2$.
This finishes the proof.
\end{proof}

\begin{corollary}\label{cor:is-double-cover-of-pp2}
If $X$ is a Gorenstein stable surface satisfying $K_{X}^2 = 2$ and $\chi(\osh{X}) = 4$,
then the canonical linear system on $X$ is base-point free and realises $X$ as a double-cover of $\PP^2$ which is branched over an octic.
\end{corollary}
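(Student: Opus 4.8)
The plan is to leverage the three preceding results. By Remark~\ref{rmk:basic-numerics} we have $\p_g(X) = h^0(\omega_X) = 3$ and $\q(X) = 0$; by Lemma~\ref{lemma:general-canonical-curve} the system $|\omega_X|$ has no fixed part and a general member $C$ is reduced and well-behaved; and by Proposition~\ref{prop:C-is-honestly-hyperelliptic} the bundle $L := \omega_X|_C$ is globally generated on such a $C$. I would first upgrade base-point freeness from the curve to the surface, and then read off the structure of the induced morphism.

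\emph{Step 1 (base-point freeness).} For a general $C\in|\omega_X|$ the defining section gives $\osh{X}(C)\cong\omega_X$, so twisting the structure sequence of $C$ by $\omega_X$ yields
\[
0\to\osh{X}\to\omega_X\to L\to 0,\qquad L=\omega_X|_C .
\]
Feeding $h^0(\osh{X})=1$, $h^0(\omega_X)=3$, $h^0(L)=2$ (Lemma~\ref{lemma:on-canonical-curves}) and $h^1(\osh{X})=\q(X)=0$ into the long exact sequence shows the restriction $H^0(\omega_X)\to H^0(L)$ is surjective. A base point $x$ of $|\omega_X|$ would lie on every member, in particular on a general $C$, and all sections of $\omega_X$ would vanish at $x$; by surjectivity every section of $L$ would then vanish at $x$, contradicting the global generation of $L$ from Proposition~\ref{prop:C-is-honestly-hyperelliptic}. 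As the base locus is contained in $C$ yet meets $C$ nowhere, it is empty, so $|\omega_X|$ is base-point free.

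\emph{Step 2 (finite double cover).} Base-point freeness produces a morphism $\phi:=\phi_{|\omega_X|}\colon X\to\PP^2$ with $\phi^*\osh{\PP^2}(1)\cong\omega_X$. Since $\omega_X$ is ample it has positive degree on every curve, so $\phi$ contracts no curve, hence is quasi-finite and, being proper, finite. As $(\phi^*\osh{\PP^2}(1))^2=K_X^2=2\neq0$, the image is two-dimensional and therefore all of $\PP^2$; the projection formula then gives $2=K_X^2=\deg(\phi)\cdot\osh{\PP^2}(1)^2=\deg(\phi)$, so $\phi$ is a double cover.

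\emph{Step 3 (the branch octic).} Being a surface satisfying $(S_2)$, $X$ is Cohen–Macaulay, and a finite morphism from a Cohen–Macaulay surface to the regular surface $\PP^2$ is flat; thus $\phi$ is a flat double cover. Writing $\phi_*\osh{X}=\osh{\PP^2}\oplus\Lcal^{-1}$ with $\Lcal=\osh{\PP^2}(d)$, relative duality gives $\phi_*\omega_X\cong\omega_{\PP^2}\oplus(\omega_{\PP^2}\otimes\Lcal)$, whence
\[
3=h^0(\omega_X)=h^0(\osh{\PP^2}(-3))+h^0(\osh{\PP^2}(d-3))=h^0(\osh{\PP^2}(d-3))
\]
forces $d-3=1$, i.e.\ $d=4$. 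The branch divisor lies in $|\Lcal^{2}|=|\osh{\PP^2}(8)|$ and is therefore an octic. (Equivalently, $\phi^*\osh{\PP^2}(d-3)\cong\omega_X\cong\phi^*\osh{\PP^2}(1)$ intersected with $\phi^*\osh{\PP^2}(1)$ gives $d=4$ directly.)

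The main obstacle is Step~1: transferring global generation from the general canonical curve to the whole surface via the surjectivity $H^0(\omega_X)\twoheadrightarrow H^0(L)$, which is exactly where $\q(X)=0$ and the earlier Proposition are indispensable. Once base-point freeness is secured, the finiteness (from ampleness), the degree count, and the passage to a flat double cover with the octic branch locus are all routine.
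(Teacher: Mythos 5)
Your proposal is correct and follows essentially the same route as the paper: base-point freeness of $L=\omega_X|_C$ on a general canonical curve (Proposition~\ref{prop:C-is-honestly-hyperelliptic}) plus surjectivity of $H^0(\omega_X)\to H^0(L)$ from $\q(X)=0$ gives base-point freeness of $|K_X|$, ampleness gives a finite degree-$2$ map to $\PP^2$, and the computation of $\phi_*\omega_X$ forces the branch divisor to have degree $8$. The only cosmetic difference is that you derive $\phi_*\omega_X\cong\omega_{\PP^2}\oplus\omega_{\PP^2}(d)$ via relative duality where the paper uses $\omega_X=\phi^*(\omega_{\PP^2}(d))$ and the projection formula; both are standard and equivalent here.
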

\begin{proof}
For a general canonical curve $C\in |K_{X}|$, the restriction $\omega_{X}|_{C}$ is base-point free by Proposition~\ref{prop:C-is-honestly-hyperelliptic}.
Since $h^1(\osh{X}) = \q(X) =0$ by assumption (cf.\ Remark~\ref{rmk:basic-numerics}), the restriction map $H^0(\omega_{X})\to H^0(\omega_{X}|_{C})$ is surjective; hence, so is the evaluation map $H^{0}(\omega_{X})\to H^{0}(\omega_{X}|_{p})$ for every $p\in C$.
This implies that $|K_{X}|$ is base-point free.
Since $K_{X}$ is ample, the canonical map $\phi := \phi_{|K_{X}|}\colon X\to \PP^2$ is finite and of degree $K_{X}^2 = 2$.
Finally, if $d\in\NN$ is such that the branch divisor is of degree $2d$, then we have to have $\phi_{*}\omega_{X} = \phi_{*}\phi^{*}(\omega_{\PP^2}(d)) = \omega_{\PP^2}(d)\oplus \omega_{\PP^2}$.
Thus, $3 = \p_{g}(X) = h^0(\omega_{\PP^2}(d)\oplus \omega_{\PP^2}) = h^0(\osh{\PP^2}(d-3))$, which is possible only if $d = 4$; hence, the branch divisor is an octic.
\end{proof}

The following corollary is equivalent to the former; we present a separate proof, though, because it shows how to compute the canonical ring from the canonical ring of a general canonical curve.

\begin{corollary}\label{cor:canonical-ring}
The canonical ring $R(X,\omega_{X})$ of a Gorenstein stable surface $X$ with $\chi(\osh{X}) = 4$ and $K_{X}^2 = 2$ is isomorphic to $\CC[x_{0},x_{1},x_{2},z]/(z^2-f_{8})$, where $x_{0}$, $x_{1}$ and $x_{2}$ are of degree $1$ and $z$ is of degree $4$ and where $f_{8}\in \CC[x_{0},x_{1},x_{2}]$ is a non-trivial homogeneous polynomial of degree $8$.
\end{corollary}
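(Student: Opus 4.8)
The plan is to realise $R:=R(X,\omega_X)$ as a one-step extension of the half-canonical ring of a general canonical curve $C\in|K_X|$ along the section $x_2\in H^0(\omega_X)$ that cuts out $C$. I first compute $R(C,L):=\bigoplus_{n\ge0}H^0(C,L^n)$, where $L:=\omega_X|_C$. By Proposition~\ref{prop:C-is-honestly-hyperelliptic} the curve $C$ is honestly hyperelliptic, the pencil $|L|$ defining a finite double cover $\gamma\colon C\to\PP^1$, and by Lemma~\ref{lemma:on-canonical-curves} we have $L^2\cong\omega_C$, $\deg L=2$ and $\p_a(C)=3$. Since $C$ is Gorenstein, $\gamma_*\osh{C}$ is locally free of rank $2$ on $\PP^1$ and splits as $\osh{\PP^1}\oplus\osh{\PP^1}(-d)$; matching $\deg\omega_C=4$ forces $d=4$, so $\gamma$ is branched over eight points and
$$R(C,L)\cong\bigoplus_{n\ge0}\Bigl(H^0\bigl(\osh{\PP^1}(n)\bigr)\oplus H^0\bigl(\osh{\PP^1}(n-4)\bigr)\Bigr)\cong\CC[u_0,u_1,y]/(y^2-f),$$
with $\deg u_i=1$, $\deg y=4$, and $f=f(u_0,u_1)$ a nonzero binary octic (nonzero because a general $C$ is reduced).

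Next I transport this to $X$. Tensoring $0\to\osh{X}(-C)\to\osh{X}\to\osh{C}\to0$ with the line bundle $\omega_X^n$ and using $\osh{X}(C)\cong\omega_X$ yields
$$0\longrightarrow\omega_X^{n-1}\xrightarrow{\ \cdot x_2\ }\omega_X^n\longrightarrow L^n\longrightarrow0.$$
The crucial input is the vanishing $H^1(\omega_X^{n-1})=0$ for every $n\ge1$: for $n=1$ it is $\q(X)=0$ (Remark~\ref{rmk:basic-numerics}); for $n=2$ it reads $h^1(\omega_X)=h^1(\osh{X})=0$ by Serre duality; and for $n\ge3$ the bundle $\omega_X^{n-2}$ is ample, so Kodaira vanishing for semi-log-canonical surfaces (Liu--Rollenske \cite[Proposition~3.1]{LR-pluri}) applies to $\omega_X^{n-1}=\omega_X\otimes\omega_X^{n-2}$. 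Hence each restriction $H^0(\omega_X^n)\to H^0(L^n)$ is surjective with kernel $x_2 H^0(\omega_X^{n-1})$; assembling these gives a graded ring isomorphism $R/(x_2)\cong R(C,L)$ (and shows $x_2$ is a non-zerodivisor). In particular $\dim_\CC H^0(\omega_X^n)=\sum_{m=0}^n h^0(L^m)=\binom{n+2}{2}+\binom{n-2}{2}$.

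I then lift generators. Choosing $x_0,x_1\in H^0(\omega_X)$ restricting to $u_0,u_1$ and $z\in H^0(\omega_X^4)$ restricting to $y$, a graded Nakayama argument along $x_2$ (using that $u_0,u_1,y$ generate $R(C,L)$) shows that $x_0,x_1,x_2,z$ generate $R$, giving a surjection $\CC[x_0,x_1,x_2,z]\twoheadrightarrow R$ of graded rings. Lifting the curve relation $y^2=f(u_0,u_1)$ produces $z^2=f(x_0,x_1)+x_2 h$ with $h\in R_7$; writing $h=h_7(x)+z\,h_3(x)$ and replacing $z$ by $z-\tfrac12 x_2 h_3(x)$ completes the square and rewrites the relation as $z^2=f_8(x_0,x_1,x_2)$, where $f_8$ is a ternary octic congruent to $f$ modulo $x_2$, hence nonzero.

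Finally I compare Hilbert functions. The induced surjection $\CC[x_0,x_1,x_2,z]/(z^2-f_8)\twoheadrightarrow R$ has source free of rank $2$ over $\CC[x_0,x_1,x_2]$ on the basis $\{1,z\}$, so its degree-$n$ piece has dimension $\binom{n+2}{2}+\binom{n-2}{2}$, which equals $\dim_\CC H^0(\omega_X^n)$ by the previous step; the surjection is therefore an isomorphism in every degree, proving $R\cong\CC[x_0,x_1,x_2,z]/(z^2-f_8)$. The main obstacle is the middle stage: obtaining a single clean relation requires the full vanishing package (in particular the semi-log-canonical Kodaira theorem, to make all restriction maps surjective) and the completion of the square to kill the a priori possible $z$-linear term, after which the Hilbert-function count rules out any further relation.
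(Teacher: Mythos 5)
Your proposal is correct and follows essentially the same route as the paper: restrict to a general canonical curve $C\in|K_X|$, identify $R(C,L)\cong\CC[y_1,y_2,z]/(z^2-g_8)$ using the honestly hyperelliptic structure (the paper cites \cite[Lemma 3.5]{CFHR:1999} where you rederive it via $\gamma_*\osh{C}$), and use Kodaira vanishing together with $\q(X)=0$ to make the restriction $R(X,\omega_X)\to R(C,L)$ surjective with kernel generated by the section cutting out $C$. Your explicit lifting of generators and relations, the completion of the square, and the Hilbert-function comparison are exactly the details the paper compresses into ``this is easily seen to imply.''
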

\begin{proof}
Let $x_{0}\in H^0(\omega_{X})$ be a general section, such that its associated canonical divisor $C = (x_{0})_{0}\in |K_{X}|$ is an honestly hyperelliptic curve of genus $3$, as granted by Proposition~\ref{prop:C-is-honestly-hyperelliptic}.
Then the section ring of the invertible sheaf $L = \omega_{X}|_{C}$ is isomorphic to $\CC[y_{1},y_{2},z]/(z^2-g_{8})$ for some homogeneous $g_{8}\in\CC[x_{1},x_{2}]$ of degree $8$, where $\deg(y_{1}) = \deg(y_{2}) = 1$ and $\deg(z) = 4$, as shown by Catanese, Franciosi, Hulek and Reid \cite[Lemma 3.5]{CFHR:1999}.
By Kodaira vanishing and since $\q(X) = 0$, the restriction map $R(X,\omega_{X})\to R(C,L)$ is surjective and the kernel is generated by $x_{0}$.
This is easily seen to imply that the associated map $\CC[x_{0},x_{1},x_{2},z]\to R(X,\omega_{X})$ is surjective with kernel generated by $z^2-f_{8}$ for some homogeneous $f_{8}\in \CC[x_{0},x_{1},x_{2}]$ of degree $8$ which lifts $g_{8}\in\CC[x_{1},x_{2}]$.
\end{proof}

We conclude with the remark that conversely, a sufficiently nice plane octic gives rise to a Gorenstein stable surface with the desired invariants.
Precisely, a double-cover $X\to \PP^2$ branched over a divisor $B$ is semi-log-canonical if and only of the pair $(\PP^{2},\tfrac{1}{2}B)$ is log-canonical, by Alexeev and Pardini \cite[Lemma~2.3]{AP:2012}.
For later reference, we deal not just with a single curve, but with a family of such.
\begin{proposition}\label{prop:curve-to-surface}
If $B\subset \PP_{S}^{2}$ is a flat family of octics, it is in particular a relative Cartier divisor.
Thus, we can form the double-cover $\Xscr \to \PP_{S}$ branched over $B$.
Assume that every fibre $B_{s}$, $s\in S(\CC)$, is such that the pair $(\PP^{2},\tfrac{1}{2}B_{s})$ is log-canonical.
Then the composition $f \colon \Xscr\to\PP_{S}^{2}\to S$ is a flat family of Gorenstein stable surfaces $\Xscr_{s}$, $s\in S(\CC)$, such that $K_{\Xscr_{s}}^2 = 2$ and $\chi(\osh{\Xscr_{s}}) = 4$.
Furthermore, $f_{*}\omega_{\Xscr/S}$ is free and $B\subset\PP_{S}^2$ can be recovered up to isomorphism as the branch divisor of the double-cover $\Xscr\to \PP_{S}(f_{*}\omega_{\Xscr/S})$.
\end{proposition}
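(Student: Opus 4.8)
The plan is to realise $\Xscr$ explicitly as a cyclic double cover and then verify everything first fibrewise and afterwards in the family. Since $B$ is flat over $S$ with each fibre $B_{s}$ an effective Cartier divisor (an octic) on $\PP^{2}$, the subscheme $B\subset\PP_{S}^{2}$ is a relative effective Cartier divisor, which I would record as the asserted standard fact. Writing $g\colon\PP_{S}^{2}\to S$ for the projection and $\mathcal{L}=\osh{\PP^2_S}(4)$ for the relative Serre sheaf, the relation $\mathcal{L}^{\otimes 2}\cong\osh{\PP^2_S}(B)$ holds at least Zariski-locally on $S$, where the possible twist by $g^{*}\Pic(S)$ trivialises, and the section cutting out $B$ endows $\osh{\PP^2_S}\oplus\mathcal{L}^{-1}$ with the structure of a sheaf of $\osh{\PP^2_S}$-algebras. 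I set $\phi\colon\Xscr=\Spec_{\PP^2_S}(\osh{\PP^2_S}\oplus\mathcal{L}^{-1})\to\PP_{S}^{2}$; because $\Pic(\PP^{2})$ is torsion-free, the fibrewise cover is unique, so the local constructions glue to a global $\Xscr$. As $\phi$ is finite flat and $g$ is flat (indeed smooth), the composite $f=g\circ\phi$ is flat.

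Next I would verify the fibrewise claims. Each $\Xscr_{s}$ is the double cover of $\PP^{2}$ branched over $B_{s}$; it is a local complete intersection (locally $z^{2}=f_{8}$), hence Gorenstein, and it is semi-log-canonical exactly because $(\PP^{2},\tfrac{1}{2}B_{s})$ is log-canonical, by Alexeev and Pardini \cite[Lemma~2.3]{AP:2012}. The double-cover ramification formula gives $\omega_{\Xscr_{s}}\cong\phi^{*}(\omega_{\PP^{2}}\otimes\mathcal{L})=\phi^{*}\osh{\PP^2}(1)$, the pull-back of an ample sheaf along a finite morphism and therefore ample; so $\Xscr_{s}$ is a stable surface with $K_{\Xscr_{s}}^{2}=\deg(\phi)\cdot\osh{\PP^2}(1)^{2}=2$. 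From $\phi_{*}\osh{\Xscr_{s}}=\osh{\PP^2}\oplus\osh{\PP^2}(-4)$ I compute $\chi(\osh{\Xscr_{s}})=\chi(\osh{\PP^2})+\chi(\osh{\PP^2}(-4))=1+3=4$.

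For the freeness and the recovery of $B$, I would invoke relative duality for the finite flat morphism $\phi$, which yields
$$\phi_{*}\omega_{\Xscr/S}\cong\cHom_{\osh{\PP^2_S}}\bigl(\phi_{*}\osh{\Xscr},\,\omega_{\PP^2_S/S}\bigr)\cong\omega_{\PP^2_S/S}\oplus\bigl(\mathcal{L}\otimes\omega_{\PP^2_S/S}\bigr)\cong\osh{\PP^2_S}(-3)\oplus\osh{\PP^2_S}(1).$$
Pushing forward along $g$ and using $g_{*}\osh{\PP^2_S}(-3)=0$ together with $g_{*}\osh{\PP^2_S}(1)\cong H^{0}(\PP^{2},\osh{\PP^2}(1))\otimes_{\CC}\osh{S}\cong\osh{S}^{\oplus 3}$, I obtain $f_{*}\omega_{\Xscr/S}\cong\osh{S}^{\oplus 3}$, free of rank $3$. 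The same duality computation identifies $\omega_{\Xscr/S}\cong\phi^{*}\osh{\PP^2_S}(1)$, so the three generators of $f_{*}\omega_{\Xscr/S}$ pull back to the coordinate sections defining $\phi$; hence the relative canonical map $\Xscr\to\PP_{S}(f_{*}\omega_{\Xscr/S})=\PP_{S}(\osh{S}^{\oplus 3})$ is, under the canonical identification $\PP_{S}(\osh{S}^{\oplus 3})\cong\PP_{S}^{2}$, precisely $\phi$, whose branch divisor is $B$.

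I expect the main obstacle to be organisational rather than conceptual: carrying the fibrewise computations uniformly over the base, that is, checking that relative duality and the pushforward $g_{*}\osh{\PP^2_S}(1)$ behave as the naive fibrewise answer predicts, so that $f_{*}\omega_{\Xscr/S}$ is genuinely free and compatible with base change and not merely fibrewise three-dimensional, and handling the harmless twist by $g^{*}\Pic(S)$ in the square root $\mathcal{L}$ when $S$ is not local. The semi-log-canonical property and the numerical invariants are then immediate from the cited result and the standard double-cover formulae.
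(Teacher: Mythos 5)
Your proof is correct and follows essentially the same route as the paper: construct the relative double cover, verify the fibrewise claims via Alexeev--Pardini and the standard double-cover formulas ($\omega_{\Xscr_s}\cong\phi^{*}\osh{\PP^2}(1)$, $K^2=2$, $\chi=4$), compute $f_{*}\omega_{\Xscr/S}\cong\osh{S}^{\oplus 3}$ by pushing forward the splitting $\osh{\PP_S^2}(-3)\oplus\osh{\PP_S^2}(1)$, and recover $B$ by identifying the relative canonical map with $\phi$. The only cosmetic difference is that you derive that splitting from relative duality for the finite flat morphism, where the paper simply quotes $\varphi_{*}\omega_{\Xscr/S}=\omega_{\PP_S^2/S}\oplus\omega_{\PP_S^2/S}(4)$; your extra care with the $g^{*}\Pic(S)$-twist and gluing addresses a point the paper leaves implicit in its phrase ``the cover taking the square-root of the section of $\osh{\PP_S^2/S}(8)$ defining $B$.''
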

\begin{proof}
At first, suppose that we are dealing with a single double-cover $\varphi\colon X\to\PP^{2}$, branched over an octic $B\in|\osh{\PP^{2}}(8)|$ such that the pair $(\PP^{2},\tfrac{1}{2}B)$ is log-canonical, so that $X$ is semi-log-canonical, as discussed above.
Note that since $\varphi$ is finite and $\omega_{X} = \varphi^{*}\omega_{\PP^{2}}(4) = \varphi^{*}\osh{\PP^{2}}(1)$, the canonical divisor on $X$ is Cartier and ample, i.e., $X$ is Gorenstein and stable.
The invariants $K_{X}^{2} = 2$ and $\chi(\osh{X}) = 4$ are computed as follows: $K_{X}$ defines a double-cover onto $\PP^{2}$, hence, $K_{X}^2 = 2$ and $\chi(\osh{X}) = \chi(\varphi_{*}\osh{X}) = \chi(\omega_{\PP^{2}}\oplus\omega_{\PP^{2}}(4)) = 1+3 = 4$.

Now let $S$ be a scheme of finite type over $\CC$ and let $B\subset \PP_{S}^{2}$ be a relative Cartier divisor of degree $8$.
Let $\Xscr\to\PP_{S}^{2}$ be the double-cover branched over $B$; more precisely, the cover taking the square-root of the section of $\osh{\PP_{S}^{2}/S}(8)$ defining $B$.
Since $\varphi_{*}\osh{\Xscr} = \osh{\PP_{S}^{2}}\oplus\osh{\PP_{S}^{2}}(-4)$ is locally free, the double-cover morphism is flat; thus, $f\colon \Xscr\to S$ is flat.

For every $s\in S(\CC)$, the fibre $\Xscr_{s}$ naturally identifies with the double-cover of $\PP^{2}$ branched over $B_{s}$.
Thus, $\Xscr_{s}$ is a Gorenstein stable surface if and only if $(\PP^{2},\tfrac{1}{2}B_{s})$ is log-canonical.
Let us suppose that this is indeed the case for all $s\in S(\CC)$.
Then all fibres $(f_{*}\omega_{\Xscr/S})(s) = H^{0}(\Xscr_{s},\omega_{\Xscr_{s}})$, $s\in  S(\CC)$, are $3$-dimensional and by naturality we observe $\Xscr$ as another double-cover $g\colon \Xscr\to\PP_{S}(f_{*}\omega_{\Xscr/S})$.
On the other hand, $f_{*}\omega_{\Xscr/S}$ is the direct image of $\varphi_{*}\omega_{\Xscr/S} = \omega_{\PP_{S}^{2}/S}\oplus\omega_{\PP_{S}^{2}/S}(4) = \osh{\PP_{S}^{2}/S}(-3)\oplus \osh{\PP_{S}^{2}/S}(1)$ along the projection $\PP_{S}^2\to S$; thus, $f_{*}\omega_{\Xscr/S}\cong \osh{S}\otimes H^0(\PP^{2},\osh{\PP^{2}}(1))\cong \osh{S}^{3}$.
Tracing through the sequence of morphisms involved shows that the induced isomorphism $\PP_{S}^{2}\cong \PP_{S}(f_{*}\omega_{\Xscr/S})$ identifies the double-covers $f$ and $g$.
This proves the claim.
\end{proof}

Recall that the \emph{log-canonical threshold} of an effective divisor $D\subset X$ on a variety $X$ is the number $\mathrm{lcth}(X,D) = \sup\{t\geq 0\mid(X,tD) \text{ is log-canonical} \}$, see Koll\'{a}r \cite[Section~8.2]{Kollar:2013}.
Thus, the a pair $(\PP^{2},\tfrac{1}{2}B)$ is log-canonical if and only if the log-canonical threshold of $B$ (in $\PP^{2}$) is at least $\tfrac{1}{2}$.
For brevity, we introduce a term for the plane curves with this property.

\begin{definition}
A plane curve $C\subset\PP^{2}$ is said to be \emph{half-log-canonical} if the pair $(\PP^{2},\tfrac{1}{2}C)$ is log-canonical; equivalently, if $\mathrm{lcth}(\PP^{2},C)\geq \tfrac{1}{2}$.
\end{definition}

This definition is independent of the embedding since the condition on the singularities is (analytically) local.

\subsection{The normalisation and the minimal resolution}

As we have shown above, every Gorenstein stable surface $X$ with $K_{X}^2 = 2$ and $\chi(\osh{X}) = 4$ arises as a double-cover of the plane, branched over an octic curve $B\in|\osh{\PP^2}(8)|$ such that the pair $(\PP^{2},\tfrac{1}{2}B)$ is log-canonical.
For such an octic, the ceiling $\ceil{\tfrac{1}{2}B}$ is supposed to be reduced, by definition.
That is, all integral components of $B$ have to appear with coefficient $\leq 2$.
In particular, every admissible $B$ decomposes as a sum $B = B'+2B''$ of (possibly trivial) reduced effective divisors.
Moreover, $B''$ can have at worst nodes and is smooth at the points of intersection with $B'$; a proof can be found in Koll\'{a}r \cite[Corollary~2.32]{Kollar:2013}, but this also follows from the classification of semi-log-canonical hypersurface singularities discussed in Proposition~\ref{prop:possible-curve-singularities} below.

If $\pi\colon \overline{X}\to X$ denotes the normalisation, then by a result of Pardini~\cite[Proposition~3.2]{Pardini:1991}, the composition $\overline{\varphi} = \varphi\circ\pi\colon\overline{X}\to\PP^{2}$ is the double-cover branched over $B'$ and the conductor loci $F\subset X$ and $\overline{F}\subset\overline{X}$ are the reduced pre-images of $B''$ under $\varphi$ and $\overline{\varphi}$, respectively.

This proves most of the following statement which we state for later reference.

\begin{proposition}\label{prop:description-of-normalisation}
Let $X$ be a Gorenstein stable surface with numerical invariants $K_{X}^{2} = 2$ and $\chi(\osh{X}) = 4$, let $\phi\colon X\to\PP^2$ be the canonical double-cover and let $B\subset \PP^2$ be the branch divisor.
Then the following holds:
\begin{enumerate}[label=\alph*)]
\item \label{just-for-the-proof1}The pair $(\PP^2,\tfrac{1}{2}B)$ is log-canonical; in particular, there is a unique decomposition $B = B'+2B''$ with $B',B''$ effective and reduced; $B''$ is nodal.
\item \label{just-for-the-proof2}The composition of $\phi\colon X\to\PP^2$ with the normalisation $\pi\colon \overline{X}\to X$ is the double-cover branched over $B'$ and the reduced pre-images of $B''$ in $\overline{X}$ and $X$ are the conductor loci $\overline{F}\subset\overline{X}$ and $F\subset X$, respectively.
\item \label{just-for-the-proof3}The morphism $(\phi\circ \pi)|_{\overline{F}}\colon \overline{F}\to B''$ is a double-cover branched over the Cartier divisor $B'|_{B''}$ and it factors through the isomorphism $\phi|_{F}\colon F\to B''$.
\end{enumerate}
\end{proposition}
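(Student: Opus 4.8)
The plan is to notice that parts~\ref{just-for-the-proof1} and~\ref{just-for-the-proof2} have essentially been established already in the discussion preceding the statement, so that the real content lies in part~\ref{just-for-the-proof3}. Concretely: that $X$ is a double-cover of $\PP^2$ branched over an octic $B$ with $(\PP^2,\tfrac{1}{2}B)$ log-canonical is Corollary~\ref{cor:is-double-cover-of-pp2}; the decomposition $B=B'+2B''$ into reduced effective divisors is forced by the requirement that $\ceil{\tfrac{1}{2}B}$ be reduced, and it is unique because $B'$ collects precisely the components of $B$ of odd multiplicity and $B''$ those of even (i.e.\ coefficient $2$) multiplicity; that $B''$ is nodal and meets $B'$ only in smooth points of $B''$ is Koll\'ar \cite[Corollary~2.32]{Kollar:2013} (equivalently Proposition~\ref{prop:possible-curve-singularities}). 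Finally, that $\phi\circ\pi$ is the double-cover branched over $B'$, with $\overline{F}$ and $F$ the reduced pre-images of $B''$, is Pardini \cite[Proposition~3.2]{Pardini:1991}. I would therefore spend the proof on part~\ref{just-for-the-proof3}.

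For part~\ref{just-for-the-proof3} the factorisation itself is automatic: since $\overline{\varphi}:=\phi\circ\pi$ carries $\overline{F}$ onto $F$ and $F$ onto $B''$, restricting the identity $\overline{\varphi}=\phi\circ\pi$ to the conductor gives $\overline{\varphi}|_{\overline{F}}=(\phi|_{F})\circ(\pi|_{\overline{F}})$. Everything else should follow from base-change descriptions of the two covers, so the first step is to record these. Writing $s',s''$ for the sections cutting out $B',B''$, the cover $\phi\colon X\to\PP^2$ is the double-cover attached to $M:=\osh{\PP^2}(4)$ and the section $s=s'(s'')^2$ of $\osh{\PP^2}(8)$, while $\overline{\varphi}\colon\overline{X}\to\PP^2$ is attached to a square root $L$ of $\osh{\PP^2}(B')$ and the section $s'$.

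The key observation is what happens on restricting each cover to $B''$. Since $s|_{B''}=s'|_{B''}\cdot(s''|_{B''})^2=0$, the fibre product $\phi^{-1}(B'')=X\times_{\PP^2}B''$ is the split square-zero extension of $\osh{B''}$ (locally $\osh{B''}[w]/(w^2)$), whose reduction is $B''$ embedded as the zero-section; so by part~\ref{just-for-the-proof2} we get $F=\phi^{-1}(B'')_{\mathrm{red}}\cong B''$ and $\phi|_{F}$ is an isomorphism. For the other cover, $\overline{\varphi}^{-1}(B'')=\overline{X}\times_{\PP^2}B''$ is the double-cover of $B''$ attached to $(L|_{B''},s'|_{B''})$; as $B'$ and $B''$ have no common component, $s'|_{B''}$ is a non-zero section whose vanishing locus is the Cartier divisor $B'|_{B''}$, which is thus the branch divisor. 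To see that this double-cover is $\overline{F}$ itself, rather than a thickening of it, I would check that $\overline{X}\times_{\PP^2}B''$ is already reduced: locally it is $w^2=s'|_{B''}$ over the reduced curve $B''$, which at a smooth point of $B''$ is the reduced germ $w^2=t^k$, and at a node of $B''$ (which lies off $B'$, so $s'|_{B''}$ is a unit) splits into two reduced branches. Hence $\overline{F}=\overline{\varphi}^{-1}(B'')$ is the asserted double-cover branched over $B'|_{B''}$.

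The main obstacle is precisely this reducedness verification at the special points of $B''$, for it is what upgrades the set-theoretic picture to the scheme-theoretic assertions $F\cong B''$ and ``$\overline{F}\to B''$ branched over $B'|_{B''}$''. It is exactly here that the structural input from part~\ref{just-for-the-proof1} is indispensable: without the control that $B''$ is nodal and that its singular points and its meeting with $B'$ are as tame as claimed, the scheme $\overline{\varphi}^{-1}(B'')$ could acquire embedded points or non-reduced structure and the clean identifications above would break down.
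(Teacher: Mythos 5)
Your proposal is correct, and its overall shape matches the paper's: parts a) and b) are disposed of by appeal to the discussion immediately preceding the statement (Koll\'ar for the structure of $B=B'+2B''$, Pardini for the identification of $\overline{X}$ and the conductor loci), and the genuine work goes into c). Where you differ is in how c) is proved. The paper asserts that the double-cover structure of $\overline{F}\to B''$ simply ``follows from b)'', and establishes the isomorphism $\phi|_{F}\colon F\to B''$ by a seminormality-type argument: $\phi|_{F}$ is finite of degree one, hence generically an isomorphism, and it is bijective; since $B''$ has only nodes this forces it to be an isomorphism. You instead compute the scheme-theoretic fibres of both covers over $B''$: the fibre product $X\times_{\PP^2}B''$ is a square-zero extension of $\osh{B''}$ whose reduction is literally $B''$, which gives $F\cong B''$ using only that $B''$ is reduced, and $\overline{X}\times_{\PP^2}B''$ is the double cover of $B''$ attached to $(L|_{B''},s'|_{B''})$, which you check is already reduced (using that the nodes of $B''$ lie off $B'$) and hence coincides with $\overline{F}$. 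Your route is more explicit and makes the scheme-theoretic content of c) transparent --- in particular why $B'|_{B''}$ is the branch divisor and why no embedded or non-reduced structure can appear --- at the cost of a local reducedness verification; the paper's argument is shorter but leaves these base-change identifications to the reader. Both are sound.
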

\begin{proof}
We have discussed \emph{\ref{just-for-the-proof1}} and \emph{\ref{just-for-the-proof2}} right above the statement of the proposition.

Regarding \emph{\ref{just-for-the-proof3}}: That $(\phi\circ \pi)|_{\overline{F}}\colon \overline{F}\to B''$ is a double-cover branched over the Cartier divisor $B'|_{B''}$ follows from \ref{just-for-the-proof2}.
It also follows that $\phi|_{F}\colon F\to B''$ is of degree one, hence, generically an isomorphism.
It remains to show that it is an isomorphism everywhere.
Since $B''$ has only nodes and $\phi|_{F}$ is finite, it suffices to observe that $\phi|_{F}$ is bijective, which holds by construction.
\end{proof}

\subsubsection{The birational geometry of the minimal resolutions}
The birational geometry of the surfaces under investigation strongly depends on the number and degrees of the irrational singularities.
Recall that an isolated surface singularity $(X,x)$ is called \emph{irrational} if it is not rational; i.e., if the exceptional divisor $E$ in the minimal resolution $(Y,E)\to (X,x)$ has strictly positive arithmetic genus.
Its \emph{degree} is the negative of the self-intersection number $-E^2$.
From the classification of semi-log-canonical hypersurface singularities in dimension two, it follows that if $(X,x)$ is irrational and semi-log-canonical, then $\p_{a}(E) = 1$ (see Proposition~\ref{prop:possible-curve-singularities} below), i.e., $(X,x)$ is \emph{elliptic}.
It also follows that the only elliptic semi-log-canonical singularities occurring on double-covers of a smooth surface have to have degree $1$ or $2$.
We distinguish two cases:
If $E$ is a smooth elliptic curve, $(X,x)$ is said to be \emph{simply elliptic} and otherwise \emph{cuspidal} (or \emph{a cusp}).
The latter may happen if $E$ is a cycle of rational curves.

The holomorphic Euler characteristic of the resolution is easy to compute:
\begin{lemma}\label{lemma:chi-of-resolution}
Let $X$ be a log-canonical surface with $k$ irrational singularities.
For any resolution of singularities $Y$ of $X$, we have $\chi(\osh{Y}) = \chi(\osh{X})-k$.
\end{lemma}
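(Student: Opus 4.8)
The plan is to run the Leray spectral sequence for a resolution $f\colon Y\to X$ and reduce the whole statement to computing the length of $R^{1}f_{*}\osh{Y}$ at each singular point. Since $X$ is log-canonical it is in particular normal, so $f_{*}\osh{Y}=\osh{X}$; as every fibre of $f$ has dimension at most one, $R^{q}f_{*}\osh{Y}=0$ for $q\geq 2$; and $R^{1}f_{*}\osh{Y}$ is a skyscraper sheaf supported on the finite set of points where $f$ fails to be an isomorphism. Consequently $H^{p}(X,R^{1}f_{*}\osh{Y})=0$ for $p>0$, and the five-term exact sequence of the Leray spectral sequence collapses to
\[
\chi(\osh{Y})=\chi(\osh{X})-\ell\bigl(R^{1}f_{*}\osh{Y}\bigr),
\]
where $\ell$ denotes the total length. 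The stalk length $\dim_{\CC}(R^{1}f_{*}\osh{Y})_{x}$ is the geometric genus of $(X,x)$ and is independent of the chosen resolution, so it is enough to determine it on a convenient resolution and the conclusion then holds for every resolution in the statement.

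It remains to identify this length. By definition a rational singularity contributes $0$, so only the $k$ irrational points can contribute, and the claim reduces to showing that each irrational log-canonical surface singularity has geometric genus exactly $1$. First I would pass to a good log resolution, with reduced simple-normal-crossing exceptional divisor $E=f^{-1}(x)_{\mathrm{red}}$. Since a log-canonical singularity is Du Bois (Koll\'ar \cite[Corollary~6.32]{Kollar:2013}; cf.\ the discussion in the Notations), the comparison map characterising isolated Du Bois singularities gives an isomorphism $(R^{1}f_{*}\osh{Y})_{x}\cong H^{1}(E,\osh{E})$. For a connected reduced projective curve one has $h^{1}(\osh{E})=\p_{a}(E)$, and an irrational, hence elliptic, log-canonical surface singularity satisfies $\p_{a}(E)=1$, as recorded in Proposition~\ref{prop:possible-curve-singularities}. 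Therefore $\dim_{\CC}(R^{1}f_{*}\osh{Y})_{x}=1$ at every irrational point.

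Combining the two steps, $\ell(R^{1}f_{*}\osh{Y})=\sum_{x\ \mathrm{irrational}}1=k$, so the displayed formula becomes $\chi(\osh{Y})=\chi(\osh{X})-k$, as desired.

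I expect the main obstacle to be the final identification, namely the length-one statement $\dim_{\CC}(R^{1}f_{*}\osh{Y})_{x}=1$ at an irrational singularity. The Leray reduction is formal, but pinning the geometric genus down to exactly $1$ requires both the Du Bois property of log-canonical singularities and the ellipticity $\p_{a}(E)=1$ of the exceptional divisor; some care is needed to apply the comparison isomorphism to a genuine log resolution (so that $E$ is simple normal crossing) and to invoke the resolution-independence of the geometric genus in order to pass from this particular resolution back to the arbitrary resolution in the statement.
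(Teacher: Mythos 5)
Your proof is correct, and the Leray reduction $\chi(\osh{Y})=\chi(f_{*}\osh{Y})-\chi(R^{1}f_{*}\osh{Y})=\chi(\osh{X})-h^{0}(R^{1}f_{*}\osh{Y})$ is exactly the skeleton of the paper's argument. The difference lies in how the local contribution is pinned down, and in how resolution-independence is handled. The paper first reduces to the minimal resolution by the birational invariance of $\chi$ for smooth surfaces and then simply cites Liu--Rollenske \cite[Lemma~A.6]{LR-pluri} for the equality $h^{0}(R^{1}f_{*}\osh{Y})=k$; you instead keep an arbitrary resolution, invoke the resolution-independence of the geometric genus $\dim_{\CC}(R^{1}f_{*}\osh{Y})_{x}$ (both routes are fine), and then actually \emph{prove} the length-one statement via Steenbrink's characterisation of isolated Du Bois singularities, $(R^{1}f_{*}\osh{Y})_{x}\cong H^{1}(E,\osh{E})$, combined with $\p_{a}(E)=1$ for elliptic singularities. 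What your route buys is a self-contained argument in place of an external citation, and it makes transparent \emph{why} the answer is $1$ per irrational point; what it costs is some extra care that the paper avoids: the isomorphism with $H^{1}(E,\osh{E})$ requires a genuine log resolution with reduced SNC exceptional divisor, and the statement $\p_{a}(E)=1$ recorded in the paper (just before the lemma, and in Proposition~\ref{prop:possible-curve-singularities}) refers to the exceptional divisor of the \emph{minimal} resolution of a hypersurface/double-cover singularity, so you should note both that attaching trees of rational curves (as happens when passing to a log resolution) does not change $\p_{a}$, and that the general classification of log-canonical surface singularities — not just the hypersurface case — gives ellipticity of the irrational ones, which is what the cited Liu--Rollenske lemma supplies in the paper's version.
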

\begin{proof}
Since the holomorphic Euler characteristic is a birational invariant of smooth surfaces, we can suppose that $f\colon Y\to X$ is the minimal resolution.
Then $f$ has connected fibres since $X$ is normal, $\chi(R^1f_{*}\osh{Y}) = h^{0}(R^1f_{*}\osh{Y}) = k$ by Liu, Rollenske \cite[Lemma~A.6]{LR-pluri} and $R^{i}f_{*}\osh{Y} = 0$ for all $i\geq 2$ for dimension reasons.
Thus, $\chi(\osh{Y}) = \chi(f_{*}\osh{Y})-\chi(R^{1}f_{*}\osh{Y}) = \chi(\osh{X})-k,$
as claimed.
\end{proof}

The following two results are part of a manuscript in preparation by Franciosi, Pardini and Rollenske \cite{FPR:201X}.
For simplicity, we restrict both to the relevant case.

\begin{proposition}[Franciosi, Pardini, Rollenske]\label{prop:normalisation-kodaira-neg-infinite}
Assume that $X$ is a normal Gorenstein stable surface satisfying $K_{X}^2 = 2$ and $\chi(\osh{X}) = 4$.
Let $k$ be the number of elliptic singularities of $X$ and suppose that the minimal resolution $Y$ of $X$ satisfies $\kappa(Y) = -\infty$.
Then either $Y$ is rational with $\chi(\osh{Y}) = 1$ and $k = 3$, or $\chi(\osh{Y}) = 0$ and $k = 4$; in the latter case, the four elliptic singularities are simple.
\end{proposition}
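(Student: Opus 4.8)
The plan is to extract everything from the holomorphic Euler characteristic of $Y$ together with a close look at how the exceptional configurations of the elliptic singularities sit inside the ruling of $Y$.

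First I would record the numerics. By Lemma~\ref{lemma:chi-of-resolution}, applied with $\chi(\osh{X})=4$ (Remark~\ref{rmk:basic-numerics}), the minimal resolution satisfies $\chi(\osh{Y})=4-k$. Since $\kappa(Y)=-\infty$ forces $\p_{g}(Y)=h^{0}(\omega_{Y})=0$, we have $\chi(\osh{Y})=1-\q(Y)$, whence $\q(Y)=k-3$; in particular $k\geq 3$. The whole statement now reduces to proving $\q(Y)\leq 1$: for then $\q(Y)=0$ gives $\chi(\osh{Y})=1$, $k=3$, and — being a surface with $\kappa=-\infty$ and $\q=\p_{g}=0$ — $Y$ is rational by Castelnuovo's criterion, while $\q(Y)=1$ gives $\chi(\osh{Y})=0$ and $k=4$.

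The heart of the argument is the case $\q(Y)=g\geq 1$, in which $Y$ is a (possibly blown up) irrational ruled surface and its ruling $\pi\colon Y\to C$ onto a smooth curve $C$ of genus $g$ has fibres that are trees of rational curves, hence of arithmetic genus $0$. I would examine the exceptional locus of each elliptic singularity: it is connected with $\p_{a}=1$ and is either a smooth elliptic curve (simply elliptic case) or a cycle of rational curves (cuspidal case). A cuspidal cycle cannot occur, for each of its rational components maps to a point of $C$ — there being no non-constant morphism $\PP^{1}\to C$ — and connectedness then forces the entire cycle into a single fibre of $\pi$; but a fibre is a rational tree and contains no subconfiguration of arithmetic genus $1$. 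A simply elliptic curve $E$ likewise cannot lie in a fibre, so $\pi|_{E}\colon E\to C$ is non-constant, and Riemann--Hurwitz gives $g=g(C)\leq g(E)=1$. Thus for $g\geq 2$ the surface $X$ would carry no elliptic singularity at all, i.e.\ $k=0$, contradicting $k\geq 3$; hence $\q(Y)\leq 1$.

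Finally I would assemble the conclusion. The case $\q(Y)=0$ yields the rational alternative with $\chi(\osh{Y})=1$ and $k=3$ as above, and the case $\q(Y)=1$ yields $\chi(\osh{Y})=0$ and $k=4$; moreover the dichotomy of the previous paragraph shows that for $g=1$ no cuspidal singularity can occur, so all four elliptic singularities are simply elliptic, which is exactly the assertion that they are simple. I expect the main obstacle to lie in the third paragraph: one must argue cleanly that a connected exceptional configuration of arithmetic genus $1$ is incompatible with the rational-tree fibres of the ruling (this rules out cusps for every $g\geq 1$) and that the surviving simply elliptic curves bound the base genus by $1$. The remaining steps are bookkeeping with $\chi$ and the Enriques--Castelnuovo classification.
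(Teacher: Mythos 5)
Your proof is correct and follows essentially the same route as the paper: the paper simply cites Franciosi--Pardini--Rollenske \cite[Lemma~4.5]{FPR:2015} for the dichotomy (rational, or ruled of genus $1$ with only simple elliptic singularities) and then applies Lemma~\ref{lemma:chi-of-resolution} to get $\chi(\osh{Y})=4-k$, exactly as you do. Your third paragraph is a correct self-contained substitute for that citation — the cycle-of-rational-curves versus rational-tree-fibre argument and the Riemann--Hurwitz bound on the base genus are precisely the standard content of that lemma.
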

\begin{proof}[Proof after Franciosi, Pardini and Rollenske~\cite{FPR:201X}.]
With the same proof as in Franciosi, Pardini, Rollenske~\cite[Lemma~4.5]{FPR:2015} we get that either $Y$ is rational ($\chi(\osh{Y}) = 1$), or $\minmod{Y}$ is ruled of genus $1$ ($\chi(\osh{Y}) = 0$) and that in the latter case all elliptic singularities are simple.
Application of Lemma~\ref{lemma:chi-of-resolution} yields $\chi(\osh{Y}) = 4-k$, which completes the proof.
\end{proof}

\begin{theorem}[Franciosi, Pardini, Rollenske]\label{theorem:FPR}
Let $X$ be a normal Gorenstein stable surface satisfying $K_{X}^2 = 2$.
Denote by $f\colon Y\to X$ its minimal resolution and by $\sigma\colon Y\to \minmod{Y}$ a minimal model of $Y$.
Furthermore, let $d$ be the sum of the degrees of the elliptic singularities.
If $\kappa(Y)\geq 0$, then there are only the following possibilities:
\begin{enumerate}[label=\roman*)]
\item\label{theorem:FPR:itema} $Y = \minmod{Y}$ is of general type, $K_{Y}^2 = 2$ and $X$ is its canonical model ($d = 0$).
\item\label{theorem:FPR:itemb} $\minmod{Y}$ is of general type with $K_{\minmod{Y}}^2 = 1$, $\sigma\colon Y\to \minmod{Y}$ is the blow up in one point and $X$ has a unique elliptic singularity of degree $1$ ($d = 1$).
\item\label{theorem:FPR:itemc} $Y = \minmod{Y}$ is properly elliptic ($\kappa(Y) = 1$); in this case, $d = 2$.
\item\label{theorem:FPR:itemd} $\kappa(Y) = 0$ or $1$,  $\sigma\colon Y\to \minmod{Y}$ is a blow-up in one point and $d = 3$.
\item\label{theorem:FPR:iteme} $\kappa(Y) = 0$, $\sigma\colon Y\to \minmod{Y}$ is a sequence of two blow-ups and $d = 4$.
\end{enumerate}
\end{theorem}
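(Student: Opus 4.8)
The plan is to read the classification off two numerical identities, the only non-formal ingredient being control of the $(-1)$-curves contracted by $\sigma$.

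First I would compute $K_Y^2$. Since $X$ is Gorenstein, $K_X$ is Cartier and $f$ is crepant over the rational (canonical) singularities. Over an irrational point the reduced exceptional divisor $E_j$ is elliptic, so $p_a(E_j)=1$ and $E_j^2=-\deg_j$ by definition of the degree; adjunction gives $K_Y\cdot E_j=-E_j^2$, i.e. discrepancy $-1$, so that $K_Y=f^*K_X-E$ with $E=\sum_j E_j$. As the $E_j$ are $f$-exceptional and pairwise disjoint,
\[
K_Y^2=K_X^2+E^2=2-d .
\]
Combined with \ref{lemma:chi-of-resolution}, which gives $\chi(\osh Y)=4-k$ for $k$ the number of irrational points, and with $K_Y^2=K_{\minmod Y}^2-b$ for the number $b\ge 0$ of blow-downs in $\sigma$, this produces the bookkeeping identity $K_{\minmod Y}^2=2-d+b$. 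Because $\kappa(Y)\ge 0$ makes $\minmod Y$ a genuine minimal surface, one has $K_{\minmod Y}^2\ge 0$ and $\chi(\osh{\minmod Y})=4-k\ge 0$, so $k\le 4$.

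Next I would determine $\kappa(Y)$ from the bigness of $K_Y$. The class $f^*K_X=(\phi\circ f)^*\osh{\PP^2}(1)$ is nef and big with $(f^*K_X)^2=2$ and is orthogonal to every $E_j$; hence $f^*K_X-tE$ is nef for small $t>0$ with square $2-t^2d$, and the expected outcome is that $K_Y=f^*K_X-E$ is big exactly for $d\le 1$. In that range $Y$ is already minimal and $K_{\minmod Y}^2=2-d\in\{1,2\}$, giving the two general-type cases (i) and (ii). For $d\ge 2$ the canonical class is no longer big, so $\kappa(Y)\le 1$ and $K_{\minmod Y}^2=0$, whence $b=d-2$: the value $d=2$ forces $b=0$ and $Y=\minmod Y$ with $K_Y$ nef, $K_Y^2=0$ and $K_Y\not\equiv 0$ (as $f^*K_X$ is big while $E$ is negative), hence $\kappa=1$ and case (iii); the values $d=3,4$ force $b=1,2$ and $\kappa\in\{0,1\}$, the minimal type being fixed by $\chi(\osh{\minmod Y})=4-k$, giving (iv) and (v).

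The genuinely hard step, on which the whole dichotomy rests, is the geometric control of the $(-1)$-curves: one must show that any curve $\Gamma$ contracted by $\sigma$ has $K_Y\cdot\Gamma<0$, hence $E\cdot\Gamma>f^*K_X\cdot\Gamma\ge 0$, so that $\Gamma$ meets the elliptic locus, and then one must \emph{bound their number}, equivalently prove $d\le 4$. The numerical identities alone place no ceiling on $d$; the bound comes from the explicit branched-cover geometry, showing that each configuration $E_j$ can be met by only boundedly many contractible curves in terms of $\deg_j$. This is precisely the input I would import from Franciosi, Pardini and Rollenske \cite{FPR:201X}, in the spirit of \cite[Lemma~4.5]{FPR:2015}. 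Once $b\le 2$ (equivalently $d\le 4$) is secured, the admissible quadruples $(\kappa,K_{\minmod Y}^2,b,d)$ are exactly those listed in (i)--(v).
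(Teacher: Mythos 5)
Your first identity is the same as the paper's: writing $K_Y=f^*K_X-E$ and intersecting with $K_Y$ gives $K_Y^2=2-d$, equivalently $K_{\minmod{Y}}^2=2-d+m$ where $m$ is the number of blow-downs in $\sigma$. But your proposal then stalls exactly where the paper's proof does its real work. You assert that ``the numerical identities alone place no ceiling on $d$'' and that the bound $m\le 2$ (hence $d\le 4$) must be imported from the branched-cover geometry of \cite{FPR:201X}. That is not how the argument closes, and as written your proof has a genuine gap: the key bound is never established. The paper obtains it from a \emph{second} numerical identity that you never write down: intersecting $K_Y=\sigma^*K_{\minmod{Y}}+G$ with the nef class $f^*K_X$ gives
$$2=K_Y\cdot f^*K_X=\sigma^*K_{\minmod{Y}}\cdot f^*K_X+G\cdot f^*K_X\ \ge\ m,$$
because $K_{\minmod{Y}}$ is nef (here $\kappa(Y)\ge 0$ is used), $K_X$ is ample Cartier, and no component of $G$ is $f$-exceptional (the minimal resolution has no $(-1)$-curves in its fibres), so each of the $m$ components $G_i$ satisfies $G_i\cdot f^*K_X\ge 1$. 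Combined with $K_{\minmod{Y}}^2\ge 0$ this gives $d=2-K_{\minmod{Y}}^2+m\le m+2\le 4$ by pure intersection theory on $Y$; no input about double covers of $\PP^2$ is needed, and indeed the theorem is stated for arbitrary normal Gorenstein stable surfaces with $K_X^2=2$.

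The same missing identity, refined by the observation that every $(-1)$-curve $G_i\subset G$ satisfies $G_i\cdot E\ge 2$ (from $-1=G_i\cdot K_Y=G_i\cdot f^*K_X-G_i\cdot E\ge 1-G_i\cdot E$, whence $G\cdot E>m$ when $m>0$), is also what pins down the individual cases, and your case analysis leaves these steps unjustified: for $d=1$ you need $m=0$ (not just $K_Y^2=1$); for $d=2$ you claim ``$b=0$ is forced'' but a priori $K_{\minmod{Y}}^2=m$ could be $1$ or $2$; for $d=3$ ruling out $m=2$ uses that $K_{\minmod{Y}}\cdot\sigma_*E_i=0$ on a minimal surface of general type would force the $E_i$ to be rational, contradicting $p_a(E_i)=1$. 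Separately, your bigness heuristic is shaky: $f^*K_X-tE$ need not be nef for small $t>0$, since an $f$-exceptional $(-2)$-curve meeting $E$ pairs to a negative number with it; bigness of $K_Y$ in cases (i)--(ii) should instead be deduced after showing $Y=\minmod{Y}$, where nefness plus $K_Y^2>0$ suffices.
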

\begin{proof}[Proof after Franciosi, Pardini and Rollenske~\cite{FPR:201X}.]
Let $E_{i}\subset Y$, $i=1,\dots,n$, be the exceptional curves over the elliptic singularities and let $G = \sum_{i=1}^mG_{i}\subset Y$ be the exceptional divisor contracted by $\sigma\colon Y\to Y_{\textnormal{min}}$.
Then the canonical divisor can be written in two different ways as $K_{Y} = f^{*}K_{X}-\sum_{i=1}^nE_{i}$ and $K_{Y} = \sigma^{*}K_{Y_{\textnormal{min}}}+G$.
The sum of degrees can be written as $d = K_{Y}\sum_{i=1}^n E_{i} =  -\sum_{i=1}^n E^2_{i}\geq n$.
We have $G_{i}f^{*}K_{X}\geq 1$ for each component $G_{i}\subset G$ since $K_{X}$ is ample and since no component of $G$ is contracted by $f$.
Moreover, every $(-1)$-curve $G_{i}\subset G$ satisfies $G_{i}E\geq 2$, for $-1 = G_{i}K_{Y} = G_{i}f^{*}K_{X}-G_{i}E\geq 1-G_{i}E$.
In particular, $GE> m$ unless $m = 0$.
We introduce the two central (in-)equalities:
\begin{align*}
d &= K_{Y}E = K_{Y}(f^{*}K_{Y}-K_{Y}) = f^{*}K_{X}^2-K_{Y}^2 = 2-K_{\minmod{Y}}^2+m\\
2 &= K_{Y}f^{*}K_{X} = \sigma^{*}K_{\minmod{Y}}f^{*}K_{X}+Gf^{*}K_{X} \geq \sigma^{*}K_{\minmod{Y}}f^{*}K_{X}+m\geq m
\end{align*}
Using $K_{\minmod{Y}}^2\geq 0$ (from $\kappa(Y)\geq 0$), they yield $d\leq m+2 \leq 4$.
The cases \ref{theorem:FPR:itema}--\ref{theorem:FPR:iteme} correspond to the possible values for $d=0,\dots,5$, respectively:

Let $d = 0$. Then $K_{Y} = f^{*}K_{X}$ is big and nef; this is case \ref{theorem:FPR:itema}.

Let $d = 1$. Then $K_{\minmod{Y}}^2  = 1+m\geq 1$, so that $K_{\minmod{Y}}$ is minimal of general type.
Moreover, $1 = d = GE > m$; thus, $m = 0$.
This is case \ref{theorem:FPR:itemb}.

Let $d = 2$. Then $K_{\minmod{Y}}^2 = m$.
But $m \geq 1$ is impossible, for $1 = d = K_{Y}E = GE+\sigma^{*}K_{\minmod{Y}}E> m$.
Hence, we have to have $Y = \minmod{Y}$ and $K_{Y}^2 = m = 0$.
On the other hand, $\kappa(Y) \not= 0$, since $K_{Y}f^{*}K_{X} = 2 > 0$; thus, $\kappa(Y) = 1$, as in case \ref{theorem:FPR:itemc}.

Let $d = 3$.
Then $K_{\minmod{Y}}^2 = m-1$, hence $m\geq 1$.
In fact, $m = 1$:
If we had $m \geq 2$, then $\minmod{Y}$ would be minimal of general type and by $3 = K_{Y}E = K_{\minmod{Y}}\sigma_{*}E+GE$ and $GE>m\geq2$ would imply $K_{\minmod{Y}}\sigma_{*}E = 0$.
In particular, we would have to have $K_{\minmod{Y}}\sigma_{*}E_{i} = 0$ for each component $E_{i}$ of $E$.
But since $\minmod{Y}$ had to be minimal of general type, the components $E_{i}$ had to be rational then, which is impossible.
Therefore, $m = 1$ and $K_{\minmod{Y}}^2 = 0$, corresponding to case \ref{theorem:FPR:itemd}.

Finally, let $d = 4$.
Then $m = 2$ and $K_{\minmod{Y}}^2 = 0$, for $0\leq K_{\minmod{Y}}^2 = m-2$ and $m\leq 2$.
To complete the proof, it is left to show that $\kappa(Y) = 0$.
Indeed, for $2 = \sigma^{*}K_{\minmod{Y}}f^{*}K_{X}+m$ to hold, we have to have $\sigma^{*}K_{\minmod{Y}}f^{*}K_{X} = 0$; together with $K_{\minmod{Y}}^2 = 0$ this implies that $\kappa(Y) = 0$ and we arrive at case \ref{theorem:FPR:iteme}.
\end{proof}

The birational classification in the non-normal case will be established as needed later.
It will turn out that the only reducible normalisation is $\PP^2\amalg \PP^{2}$ and the possible irreducible normalisations are K3-surfaces, rational, or ruled over a curve of genus $1$.

\subsection{The singularities}
Since double-covers of smooth varieties branched over Cartier divisors are sub-varieties of the total space of a line bundle, defined by a single regular equation, the surfaces under investigation have to have hypersurface singularities, if any.
Therefore, the classification of semi-log-canonical hypersurface singularities (see Liu, Rollenske \cite{LR-hypersurfaces}) gives a complete list of analytic germs of singular points we might get.
Since we are dealing with double-cover singularities, we only need to consider those of multiplicity two.
For simplicity, we restrict our attention to the singularities of the branch curves.

\begin{table}\centering
\begin{tabular}{cccc}
\toprule
Symbol & Equation in $\CC[x,y]$ & Conditions & $\mu$\\ 
\midrule 
$A_{n}$ & $x^2+y^{n+1}$ & $n\geq 1$ & $n$ \\
$D_{n}$ & $y(x^2+y^{n-2})$ & $n\geq 4$  & $n$ \\
$E_{6}$ & $x^3+y^4$ &  & $6$ \\
$E_{7}$ & $x^3+xy^3$ & & $7$ \\
$E_{8}$ & $x^3+y^5$ & & $8$\\
\hline
$X_{9}$ & $x^4+\lambda(xy)^2+y^4$ & $\lambda^2\not=4$ & $9$ \\
$J_{10}$ & $x^3+\lambda(xy)^2+y^6$ & $4\lambda^3+27\not=0$ & $10$ \\
\hline
$X_{p} = T_{2,4,p-5}\,$ & $x^{4}+(xy)^2+y^{4+p-9}$ & $p\geq 10$ & $p$ \\
$\;\;Y_{r,s} = T_{2,4+r,4+s}$ & $x^{4+r}+(xy)^2+y^{4+s}$ & $r,s\geq 1$ & $9+r+s$ \\
$J_{2,p} = T_{2,3,p+6}\;$ & $x^3+(xy)^2+y^{6+p}$ & $p\geq 1$ & $10+p$ \\
\hline
$A_{\infty}$ & $x^2$& & $0$ \\
$D_{\infty}$ & $x^2y$& & $1$ \\
$J_{2,\infty}$ & $x^3+(xy)^2$ & & $4$ \\
$X_{\infty}$ & $x^4+(xy)^2$ & & $5$ \\
$Y_{r,\infty}$ & $x^{r+4}+(xy)^2$ & $r\geq 1$ & $r+5$ \\
$Y_{\infty,\infty}$ & $(xy)^2$&&$4$ \\
\bottomrule
\end{tabular}
\caption{The classification of half-log-canonical curve singularities. We refer to  Arnold, Gusein-Zade, Varchenko \cite[Chapter~15]{AGV:2012} for the notation.}
\label{table:singularity-classification}
\end{table}
\begin{proposition}\label{prop:possible-curve-singularities}
Let $C\subset \PP^2$ be a plane curve of degree $8$.
Then the double-cover of $\PP^2$ branched over $C$ is a Gorenstein stable surface $X$ if and only if the analytic germs of the singular points of $C$ are among those listed in \ref{table:singularity-classification}.
\end{proposition}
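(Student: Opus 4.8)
The plan is to strip off the Gorenstein and stability hypotheses, reduce to an analytically local statement about the branch germs, and then read the answer off the classification of semi-log-canonical hypersurface singularities.

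First I would note that the Gorenstein and stability conditions come for free once the cover is semi-log-canonical. As in the proof of Proposition~\ref{prop:curve-to-surface}, the double-cover $\varphi\colon X\to\PP^2$ branched over $C\in|\osh{\PP^2}(8)|$ has canonical sheaf $\omega_X=\varphi^*\osh{\PP^2}(1)$, which is Cartier and ample; moreover $X$ is projective, of pure dimension two and connected (the cover $z^2=f$ is irreducible when $f$ is not a perfect square, and otherwise two copies of $\PP^2$ glued along a quartic). Hence $X$ is a Gorenstein stable surface if and only if it is semi-log-canonical, and by Alexeev and Pardini \cite[Lemma~2.3]{AP:2012} the latter holds if and only if the pair $(\PP^2,\tfrac12 C)$ is log-canonical, i.e.\ if and only if $C$ is half-log-canonical.

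Next I would localise. Log-canonicity is an analytically local property, so $(\PP^2,\tfrac12 C)$ is log-canonical precisely when it is so at every point $p$; this is automatic where $C$ is smooth, so only the finitely many points at which $C$ is singular or non-reduced need be inspected. Over such a point $X$ is cut out in the total space of a line bundle by a single equation $z^2=f(x,y)$, with $f$ a local analytic equation of $C$: a hypersurface singularity of multiplicity two, which is semi-log-canonical exactly when $(\CC^2,\tfrac12\{f=0\})$ is log-canonical. The proposition is therefore equivalent to the statement that the multiplicity-two semi-log-canonical hypersurface germs are exactly the suspensions $z^2+f$ whose branch germ $\{f=0\}$ occurs in Table~\ref{table:singularity-classification}. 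This equivalence I would obtain by restricting the classification of semi-log-canonical hypersurface singularities of Liu and Rollenske \cite{LR-hypersurfaces} to the multiplicity-two case; both implications of the proposition then follow from completeness of that list, because $X$ is semi-log-canonical if and only if it is so at each of its singular points, and these are precisely the double points over the singular and non-reduced points of $C$.

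The main obstacle is the bookkeeping that identifies the two classifications. One must match the Liu--Rollenske normal forms with the notation of Arnold, Gusein-Zade and Varchenko used in the table and check the accompanying data: that the simple germs $A_n,D_n,E_6,E_7,E_8$ give rational double points; that the parabolic germs $X_9=T_{2,4,4}$ and $J_{10}=T_{2,3,6}$, under the open conditions $\lambda^2\neq4$ and $4\lambda^3+27\neq0$, give the simply elliptic double points of degrees $2$ and $1$ (here $\tfrac12+\tfrac14+\tfrac14=\tfrac12+\tfrac13+\tfrac16=1$); that the hyperbolic series $X_p$, $Y_{r,s}$, $J_{2,p}$ are the cusps $T_{2,q,r}$ with $\tfrac12+\tfrac1q+\tfrac1r<1$ over the stated parameter ranges; and that the degenerate germs $A_\infty,\dots,Y_{\infty,\infty}$ are exactly the non-reduced branch germs, for which the multiplicity-two part of $C$ makes $X$ non-normal yet still demi-normal and semi-log-canonical. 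The delicate point is completeness --- that every germ outside the table fails to be half-log-canonical --- which is most cleanly settled by invoking the Liu--Rollenske classification rather than computing log-canonical thresholds one singularity at a time.
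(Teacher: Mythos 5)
Your proposal is correct and follows essentially the same route as the paper: the paper's proof simply cites the Liu--Rollenske classification of semi-log-canonical hypersurface singularities restricted to multiplicity two (with the notational refinement of the $T_{2,\bullet,\bullet}$ series), while the reduction to half-log-canonicity via Alexeev--Pardini and the localisation you spell out are exactly the framework the paper sets up in the surrounding text (Proposition~\ref{prop:curve-to-surface} and the discussion preceding the statement). Your write-up just makes explicit the bookkeeping that the paper leaves implicit.
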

\begin{proof}
This is just the relevant part (multiplicity $2$) of the list given by Liu and Rollenske \cite{LR-hypersurfaces} after appropriate transformations where necessary and with the refinement of the series $T_{2,\bullet,\bullet}$ into $X_{\bullet}$, $J_{2,\bullet}$ and $Y_{\bullet,\bullet}$.
\end{proof}

\begin{remark}
Of course, du Val-singularities $A_{\bullet}$, $D_{\bullet}$, $E_{\bullet}$ on the branch curve correspond to canonical singularities on the surface. \ref{table:surface-vs-curve-singularities} provides a correspondence for the remaining types of singularities.
\begin{table}\centering
\begin{tabular}{cll}
\toprule
{Symbol} & \multicolumn{1}{c}{Branch curve singularity} & \multicolumn{1}{c}{Double-cover singularity} \\ 
\midrule
$X_{9}$ & ordinary quadruple-point & simply elliptic of degree $2$ \\

$X_{\bullet}$ & \multirow{2}{*}{degenerate quad\-rup\-le-point} & \multirow{2}*{cuspidal elliptic of degree $2$} \\
$Y_{\bullet,\bullet}$ \\
\hline
$J_{10}$ & non-degenerate $[3;3]$-point & simply elliptic of degree $1$ \\
$J_{2,\bullet}$ & degenerate $[3;3]$-point & cuspidal elliptic of degree $1$ \\
\hline
$A_{\infty}$ & double-line & double normal crossing \\
$D_{\infty}$ & double-line + transversal line & pinch point\\
$J_{2,\infty}$ & double-line + tangential line & \multirow{3}{*}{degenerate cusps}\\
$X_{\infty},Y_{\bullet,\infty}$&double-line + double-point\\
$Y_{\infty,\infty}$ & transversely meeting double-lines\\
\bottomrule
\end{tabular}
\caption{Dictionary: branch curve singularities $\leftrightarrow$ surface
singularities}
\label{table:surface-vs-curve-singularities}
\end{table}
\end{remark}

\begin{remark}
If a Gorenstein stable surface $X$ with $K_{X}^2 = 2$ and $\chi(\osh{X}) = 4$ has an $A_{n}$- or $D_{n}$-singularity, then $n\leq 48$, since the maximal Milnor number of a singular point of the branch curve of the canonical double-cover, which has degree $8$, does not exceed $49$ and this maximal number is attained only by the union of eight concurrent lines (cf.\ Lemma~\ref{lemma:milnor-number-stuff}), which is of course neither $A_{n}$ nor $D_{n}$.
(Similar bounds exist for the other families listed above.)
However, also the upper bound $n\leq 48$ is most probably not sharp and determining the maximal $n$ such that there exists a plane curve $C$ of fixed degree with an $A_{n}$-singularity, e.g., is a hard question\footnote{This was pointed out to the author by Michael Lönne.}.
For example, the maximal $A_{n}$-singularity on a quintic is for $n = 12$, e.g., by Wall \cite{Wall:1996}, and on a sextic, the maximal $A_{n}$ is for $n = 19$, which follows from Yang's classification \cite{Yang:1996}.
This seems to be about everything that is known in this direction, at least according to a related discussion on MathOverflow answered by user JNS \cite{MO:singularities}.
\end{remark}

\subsection{The mixed Hodge structure on $H^2(X)$}\label{subsection:hodge}
The stratification we will define and study later is motivated by recent work (partially in progress) of Green, Griffiths, Kerr, Laza and Robles \cite{GGR:2014,GGLR:201X,KR:201X,Robles:2016a,Robles:2016} about degenerations of Hodge structures.
Roughly, there should be a stratification of the moduli space of our surfaces under investigation, according to the type of polarised mixed Hodge structure on $H^2(X)$.
It should be noted that the details about this Hodge-theoretic stratification are subject to work in progress.
Therefore, we can only give an informal description.
We refer to Robles' exposition \cite{Robles:2016} and the references therein for more details; for the basic theory of mixed Hodge structures see Durfee's short introduction \cite{Durfee:1983} and the comprehensive account by Peters and Steenbrink \cite{PS:2008}.

Given a flat Gorenstein degeneration $\Xscr\to S$, $\Xscr_{s}$ smooth projective, we can associate a \emph{limiting polarised mixed Hodge structure} with the family of Hodge structures $H^2(\Xscr_{s};\CC)$.
The Deligne splitting gives an $\RR$-split polarised mixed Hodge structure.
Furthermore, representation theory gives rise to a relation among these Hodge diamonds, called the \emph{polarised relations}.
They reflect which Hodge structures are more degenerate than others.

Since $h^{2,0}(X) = h^{0,2}(X) = \p_{g}(X) = 3$ for a smooth surface $X$ of general type satisfying $K_{X}^2 = 2$ and $\chi(\osh{X}) = 4$, our case of interest corresponds to the Hodge numbers $h = (3,h^{1,1},3)$.
That is, the Deligne splitting $H^2(X;\CC) = \bigoplus_{p,q}I^{p,q}$ of the mixed Hodge structures has a Hodge diamond (indicating $\dim(I^{p,q})$) of the form
\begin{center}
\begin{tikzpicture}[every circle/.style={radius = 0.05}]
\draw [->] (0,0) -- (2.25,0);
\draw [<-] (0,2.25) -- (0,0);
\node [above] at (0,2.25) {$\scriptstyle{p}$};
\node [right] at (2.25,0) {$\scriptstyle{q}$};

\draw [fill] (0,2) circle;
\draw [fill] (0,1) circle;
\draw [fill] (0,0) circle;
\draw [fill] (1,2) circle;
\draw [fill] (1,1) circle;
\draw [fill] (1,0) circle;
\draw [fill] (2,2) circle;
\draw [fill] (2,1) circle;
\draw [fill] (2,0) circle;

\node [below left] at (0,0) {$\scriptstyle{r}$};
\node [above right] at (2,2) {$\scriptstyle{r}$};
\node [left] at (0,1) {$\scriptstyle{s}$};
\node [below] at (1,0) {$\scriptstyle{s}$};
\node [right] at (2,1) {$\scriptstyle{s}$};
\node [above] at (1,2) {$\scriptstyle{s}$};
\node [left] at (0,2) {$\scriptstyle{3-r-s}$};
\node [below] at (2,0) {$\scriptstyle{3-r-s}$};
\node [above] at (1,1) {$\scriptstyle{h^{1,1}-r-2s}$};
\end{tikzpicture}
\end{center}
where $r,s\geq 0$, $r+s\leq 3$ and $r+2s\leq h^{1,1}$.
With this diamond denoted by $\lozenge_{r,s}$, the polarised relation is defined as $\lozenge_{r,s}\leq \lozenge_{t,u}$ if and only if $r\leq t$ and $r+s\leq t+u$, cf.\ Robles \cite[Example~4.22]{Robles:2016}.
This is illustrated in the degeneration diagram \ref{fig:hodge-diagram-diagram}.
We will ignore $h^{1,1}$, just as we will ignore canonical surface singularities. 

\begin{figure}\centering
\begin{tikzpicture}
 \node (00) 
 {$\lozenge_{0,0}$};
 \node[right of=00,node distance=1.5cm] (01)
 {$\lozenge_{0,1}$};
 \node[below right of=01,node distance=1.5cm] (02)
 {$\lozenge_{0,2}$};
 \node[above right of=01,node distance=1.5cm] (10)
 {$\lozenge_{1,0}$};
 \node[right of=02,node distance=1.5cm] (03)
 {$\lozenge_{0,3}$};
 \node[right of=10,node distance=1.5cm] (11)
 {$\lozenge_{1,1}$};
 \node[ right of=03,node distance=1.5cm] (12)
 {$\lozenge_{1,2}$};
 \node[right of=11,node distance=1.5cm] (20)
 {$\lozenge_{2,0}$};
 \node[,below right of=20,node distance=1.5cm] (21)
 {$\lozenge_{2,1}$};
 \node[right of=21,node distance=1.5cm] (30)
 {$\lozenge_{3,0}$};
 
 \path (00) edge (01);
 \path (01) edge (10)
 			edge (02);
 \path (10) edge (11);
 \path (02) edge (11)
  			edge (03);
 \path (03) edge (12);
 \path (11) edge (20)
 			edge (12);
 \path (20) edge (21);
 \path (12) edge (21);
 \path (21) edge (30); 
\end{tikzpicture}
\caption{Degeneration diagram for the Hodge types.}\label{fig:hodge-diagram-diagram}
\end{figure}
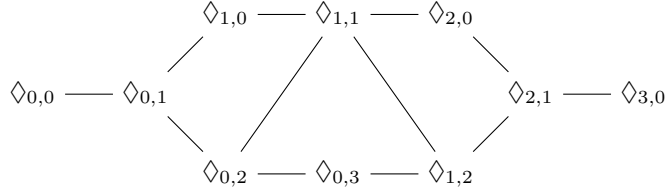

Dolgachev \cite{Dolgachev:1980} has introduced the notion of \emph{cohomologically insignificant degenerations}: If $\Xscr\to\Delta$ is a flat and projective family of varieties, where $\Delta\subset \CC$ is the unit disc and where all fibres $\Xscr_{s}$, $s\in\Delta^* = \Delta-0$ are smooth, we can compare Deligne's natural mixed Hodge structure on the cohomology of the special fibre $H^n(\Xscr_{0};\RR)$ and the limiting mixed Hodge structure on $H^n(\Xscr_{s};\RR)$, $s\not=0$, via the specialisation map.
The variety $\Xscr_{0}$ is said to be \emph{cohomologically $n$-insignificant} if these mixed Hodge structures on $H^{n}$ agree on $(p,q)$-components where $pq = 0$ for all such families with $\Xscr_{0}$ as special fibre.
By a result of Steenbrink \cite[Theorem~2]{Steenbrink:1980}, a projective variety with at worst du Bois singularities is cohomologically insignificant, that is, cohomologically $n$-insignificant for all $n$.
This applies in particular to semi-log-canonical surfaces since they are Du Bois (cf.\ Koll\'{a}r \cite[Corollary~6.32]{Kollar:2013} or Kov\'{a}cs, Schwede, Smith \cite[Theorem~4.16]{KSS:2010}).
For our purposes, it is therefore enough to work with Deligne's mixed Hodge structure.
\begin{definition}\label{def:Hodge type}
Let $X$ be a Gorenstein stable surface satisfying $K_{X}^2 = 2$ and $\chi(\osh{X}) = 4$.
Then $X$ is said to be of \emph{Hodge type $\lozenge_{r,s}$} if $r = \dim(H^2(X;\CC))^{(0,0)}$ and $s = \dim(H^2(X;\CC))^{(1,0)}$, where $(H^2(X;\CC))^{(p,q)}$ is the $(p,q)$-component of Deligne's mixed Hodge structure on $H^2(X;\CC)$.
\end{definition}

It can be shown that the moduli space under investigation in the forthcoming sections is stratified according to the Hodge type; in fact, the irrationality stratification defined below gives a refinement of this stratification, as follows from Proposition~\ref{prop:hodge-diamond-classification}.

For the computations of Hodge types of a Gorenstein stable surface $X$, we have to know the Hodge structure on its minimal resolution.
We will do the computation for our surfaces of interest as soon as we know their minimal resolutions.
To get an idea of how this is done, we show how to read off the number of cuspidal elliptic singularities from the Hodge type.

Let $X$ be a normal Gorenstein stable surface satisfying $\p_{g}(X) = 3$ and with irrational singularities $p_{1},\dots,p_{n}$, of which precisely $0\leq m\leq n$ are cusps.
Let $f\colon Y\to X$ be the resolution at the $p_{i}$, so that $Y$ has only rational singularities.
We denote the exceptional curve over $p_{i}$ by $E_{i}$ and their disjoint unions by $D = \coprod_{i=1}^n\{p_{i}\}$ and $E = \coprod_{i=1}^nE_{i}$.
Moreover, we let $i\colon D^{\textnormal{an}}\to X^{\textnormal{an}}$ and $j\colon E^{\textnormal{an}}\to Y^{\textnormal{an}}$ denote the inclusion maps.
Then we get a Mayer--Vietoris exact sequence of mixed Hodge structures, by Peters and Steenbrink \cite[Corollary-Definition 5.37]{PS:2008}
$$H^k(X;\CC)\xrightarrow{(f^{*},i^{*})} H^{k}(Y;\CC)\oplus H^{k}(D;\CC)\xrightarrow{j^{*}-f|_{E}^{*}} H^{k}(E;\CC)\to H^{k+1}(X;\CC).$$
Focussing on $H^{2}(X;\CC)$ and using that $H^1(D) = H^2(D) = 0$ for dimension reasons, we get the following exact sequence:
$$H^1(Y;\CC)\to \bigoplus_{i=1}^nH^1(E_{i};\CC)\to H^2(X;\CC)\xrightarrow{} H^2(Y;\CC).$$
If $p_{i}$ is simply elliptic, then $E_{i}$ is an elliptic curve and $H^1(E_{i})$ carries a pure Hodge structure of weight $1$ with $h^{1,0}(E_{i}) = 1$.
If $p_{i}$ is a cusp, so that $E_{i}$ is a cycle of rational curves, then $H^1(E_{i})$ is one-dimensional and its mixed Hodge structure is concentrated in weight $0$.
Concerning the mixed Hodge structure on $H^2(Y)$, since we are only interested in the $(p,0)$-components, we can pretend that $Y$ is regular since the rational singularities do not contribute.

Since $(H^1(Y))^{(0,0)} = (H^2(Y))^{(0,0)} = 0$, we conclude that $(H^{2}(X))^{(0,0)}$ is isomorphic to the $(0,0)$-component of $\bigoplus_{i=1}^nH^1(E_{i})$.
That is, $\dim(I^{(0,0)}) = m$, the number of cusps.
Likewise, since $H^2(Y)$ has no part of weight $1$, the part of weight $1$ in $H^2(X)$ entirely comes from $\bigoplus_{i=1}^nH^1(E_{i})$, so that $\dim(I^{(1,0)})\leq n-m$ is at most the number of simply elliptic singularities.
To actually compute the dimension of the $(1,0)$-component, we have to know more about the map $H^1(Y)\to\bigoplus_{i=1}^nH^1(E_{i})$.
So far, this discussion shows:
\begin{lemma}\label{lemma:MHS-computation}
Let $X$ be a normal Gorenstein stable surface with $\p_{g}(X) = 3$ having exactly $r$ cuspidal elliptic singularities.
Then $X$ is of Hodge type $\lozenge_{r,s}$ for a certain $0\leq s\leq 3-r$.
In this case the number of simply elliptic singularities is at least $s$.
\end{lemma}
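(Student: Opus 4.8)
The plan is to read off $r$ and $s$ from the Mayer--Vietoris analysis already carried out above, and then to supply the one inequality, $r+s\le 3$, that the preceding discussion does not yet address. By Definition~\ref{def:Hodge type} the Hodge type of $X$ is determined by $r=\dim I^{(0,0)}$ and $s=\dim I^{(1,0)}$ in Deligne's mixed Hodge structure on $H^2(X;\CC)$, so it suffices to compute these two numbers and to control their sum. Writing $f\colon Y\to X$ for the resolution of the $n$ irrational points, of which $m$ are cusps, the exact sequence
$$H^1(Y;\CC)\to\bigoplus_{i=1}^n H^1(E_{i};\CC)\to H^2(X;\CC)\to H^2(Y;\CC)$$
of mixed Hodge structures gives, as explained above, $\dim I^{(0,0)}=m$ (the weight-$0$ classes coming exactly from the cuspidal $E_{i}$, since $H^2(Y)$ contributes nothing in weight $0$). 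Hence $X$ is of Hodge type $\lozenge_{r,s}$ with $r=m$ the given number of cuspidal elliptic singularities and $s:=\dim I^{(1,0)}$.

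First I would record the lower bound on the number of simply elliptic singularities. The weight-$1$ part $\mathrm{Gr}^W_1 H^2(X;\CC)$ is a subquotient of $\bigoplus_i H^1(E_{i};\CC)$, and only the simply elliptic $E_{i}$ contribute in weight $1$, each with one-dimensional $(1,0)$-part. Thus $s=\dim I^{(1,0)}$ is at most the total $(1,0)$-dimension of $\bigoplus_i H^1(E_{i})$, which equals the number $n-m=n-r$ of simply elliptic singularities; this is exactly the asserted inequality.

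It remains to prove $s\le 3-r$. Here I would use that $X$, being a stable surface, is semi-log-canonical and hence has Du Bois singularities, so that $\mathrm{Gr}^0_F H^2(X;\CC)\cong H^2(X,\osh{X})$, whose dimension is $\p_{g}(X)=3$. Since $\mathrm{Gr}^0_F H^2=I^{(0,0)}\oplus I^{(0,1)}\oplus I^{(0,2)}$ and complex conjugation gives $\dim I^{(0,1)}=\dim I^{(1,0)}=s$ and $\dim I^{(0,0)}=r$, non-negativity of $\dim I^{(0,2)}$ yields $r+s\le 3$, that is $0\le s\le 3-r$, completing the proof.

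The main obstacle is bookkeeping with the mixed Hodge structures rather than any single hard estimate: I must ensure that the Mayer--Vietoris sequence is one of mixed Hodge structures with strict morphisms, so that dimensions of weight-graded and $(p,q)$-pieces may be read off term by term; I must use the known Hodge structures on $H^1(E_{i})$ (pure of weight $1$ for a smooth elliptic curve, pure of weight $0$ for a cycle of rational curves) and the fact that the $(p,0)$-pieces of $H^2(Y)$ are insensitive to the rational singularities of $Y$; and I must invoke the Du Bois identification of $\mathrm{Gr}^0_F$. The only genuinely subtle point is that $s$ need \emph{not} equal $n-r$: the connecting map $H^1(Y;\CC)\to\bigoplus_i H^1(E_{i};\CC)$ may annihilate some weight-$1$ classes, which is precisely why the statement is an inequality rather than an equality.
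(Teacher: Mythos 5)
Your argument is correct and follows essentially the same route as the paper: the Mayer--Vietoris sequence for the resolution of the irrational singularities identifies $I^{(0,0)}$ with the weight-$0$ contribution of the cuspidal exceptional curves and bounds $\dim I^{(1,0)}$ by the number of simply elliptic ones. The only difference is that you explicitly derive $r+s\le 3$ from the Du Bois identification $\mathrm{Gr}^0_F H^2(X;\CC)\cong H^2(X,\osh{X})$ together with conjugation symmetry, whereas the paper treats this constraint as built into the shape of the diamond $\lozenge_{r,s}$ fixed earlier in Section~\ref{subsection:hodge}; your version makes that implicit step precise.
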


\section{Remarks about the moduli space}
\label{section:moduli-spaces}

Our moduli space of interest is the KSBA-compactification of the Gieseker moduli space $\Mfrak_{2,4}$ of canonical models of surfaces of general type with invariants $K_{X}^2 = 2$ and $\chi(\osh{X}) = 4$.
For our techniques to apply, we restrict to the open locus of Gorenstein surfaces $\XXX\subset \olMfrak_{2,4}$.
Since this extra condition happens to simplify the definition of the moduli problem, we recall the details only for the space of Gorenstein stable surfaces.

Let $\XXXst$ be the category whose objects are pairs $(T,f\colon\Xscr\to T)$ consisting of a scheme $T$ of finite type over $\CC$ and a flat family $f\colon\Xscr\to T$ of Gorenstein stable surfaces $\Xscr_{t}$, $t\in T(\CC)$, all satisfying $K_{\Xscr_{t}}^2 = 2$ and $\chi(\osh{\Xscr_{t}}) = 4$.
The morphisms are fibre squares, as usual, so that the codomain fibration exhibits $\XXXst$ as a category fibred in groupoids over the category of complex schemes of finite type.
According to the seminal works of Koll\'{a}r, Shepherd-Barron \cite{KSB:1988,Kollar:1990} and Alexeev \cite{Alexeev:1994,AM:2004}, $\XXXst$ is a separated Deligne--Mumford stack (in the \'{e}tale topology), coarsely represented by a quasi-projective scheme $\XXX$.

\begin{remark}
If we consider only smoothable surfaces in $\olMfrak_{2,4}$, then $\XXX$ is dense, since every Gorenstein stable surface $X$ with $K_{X}^2 = 2$ and $\chi(\osh{X}) = 4$ is smoothable by Proposition~\ref{cor:is-double-cover-of-pp2}.
\end{remark}

We will now explain how the results of the previous chapter relate $\XXX$ to the following moduli space of polarised curves.
The linear system of plane octic curves $H := |\osh{\PP^{2}}(8)| (=\mathrm{Hilb}^{8n-20}(\PP^2))$ comes with its universal family $\Bscr \subset H\times\PP^2$, which is a relative Cartier divisor with respect to the projection $p_{1}\colon H\times\PP^2 \to H$.
Performing the relative double-cover branched over $\Bscr$ yields a flat, projective family $\pi\colon\Xscr\to H$ of two-dimensional schemes.
The locus $U\subset H$ parametrising curves $B$ such that the fibre $\Xscr_{B}\subset\Xscr$ is semi-log-canonical is precisely the locus of half-log-canonical curves.

\begin{lemma}\label{lemma:slc-locus-is-open}
In the notation of the preceding paragraph, the locus $U\subset H$ parametrising curves $B\subset \PP^{2}$ such that the fibre $\Xscr_{B}$ is semi-log-canonical is open.
\end{lemma}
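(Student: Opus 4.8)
The plan is to reduce the statement to the openness of the log-canonical locus of a family of pairs on the \emph{fixed smooth} surface $\PP^2$, and then to deduce this from the semicontinuity and constructibility of the log-canonical threshold in families. By the criterion of Alexeev and Pardini \cite[Lemma~2.3]{AP:2012} recalled above, the fibre $\Xscr_B$ is semi-log-canonical precisely when the pair $(\PP^2,\tfrac12 B)$ is log-canonical, i.e.\ when $\mathrm{lcth}(\PP^2,B)\geq\tfrac12$; hence
$$U=\{B\in H : \mathrm{lcth}(\PP^2,B)\geq\tfrac12\},$$
and it suffices to prove that this superlevel set is open. Here $\PP^2$ is smooth and fixed over $H$ and the coefficient $\tfrac12$ is constant, so that $K_{\PP^2}+\tfrac12 B$ is automatically $\QQ$-Cartier and one is genuinely in the classical situation of a family of divisors on a smooth variety.

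Next I would invoke the two standard properties of the log-canonical threshold along the universal curve $\Bscr\subset H\times\PP^2$. The first is \emph{lower semicontinuity}: the function $B\mapsto \mathrm{lcth}(\PP^2,B)$ is lower semicontinuous on $H$, so each locus $\{\mathrm{lcth}>c\}$ is open; this reflects the fact that the singularities of the $B_s$ can only worsen under specialisation. The second is \emph{constructibility}: on the finite-type base $H$ the threshold takes only finitely many values and is constant along the strata of a finite stratification. This follows by spreading out a log resolution, i.e.\ by stratifying $H$ into locally closed pieces over which $\Bscr$ admits a simultaneous embedded resolution; along each such stratum the discrepancies of the exceptional divisors, and hence the threshold, are constant.

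These two facts combine to give openness of the \emph{non-strict} superlevel set, which is the one point requiring care. Since the threshold takes only finitely many values, there is a largest value $v<\tfrac12$ occurring on $H$, and then $\{\mathrm{lcth}\geq\tfrac12\}$ coincides with $\{\mathrm{lcth}>v\}$, which is open by lower semicontinuity (and if no value, resp.\ every value, is $\geq\tfrac12$, the set is empty, resp.\ all of $H$). This shows $U$ is open. Equivalently, one may phrase the conclusion geometrically: the incidence locus $\{(B,p):(\PP^2,\tfrac12 B)\text{ is not log-canonical at }p\}$ is closed in $H\times\PP^2$, and its image $H\setminus U$ is closed because the projection $H\times\PP^2\to H$ is proper.

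The only real content is thus the semicontinuity and constructibility of the threshold in the family; everything after that is formal, and the subtlety of passing from $\{\mathrm{lcth}>c\}$ to $\{\mathrm{lcth}\geq\tfrac12\}$ is dissolved by the finiteness of the attained values. A reader wishing to avoid the general machinery can argue by hand from the explicit classification: by Proposition~\ref{prop:possible-curve-singularities} an octic lies in $H\setminus U$ exactly when it carries a singular germ absent from \ref{table:singularity-classification}, and each such forbidden local condition (a point of too high multiplicity, or a sufficiently degenerate lower-multiplicity point) cuts out a closed incidence condition in $H\times\PP^2$; as there are finitely many forbidden types to exclude, their union projects to the closed set $H\setminus U$. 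I expect the bulk of the work on this elementary route to lie precisely in checking that the list of forbidden germs is genuinely described by closed conditions.
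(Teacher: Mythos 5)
Your proof is correct, but it takes a genuinely different route from the paper's. The paper stays on the surface side: it observes that every fibre $\Xscr_{B}$ is Gorenstein (being a double cover of the plane), so that semi-log-canonical is equivalent to Du Bois for these fibres, and then quotes Kov\'{a}cs--Schwede's inversion of adjunction for Du Bois pairs to conclude that the Du Bois locus in the family over the smooth base $H$ is open. You instead push everything down to $\PP^{2}$ via the Alexeev--Pardini criterion, so that the question becomes the openness of $\{\mathrm{lcth}(\PP^{2},B)\geq\tfrac{1}{2}\}$ for a family of divisors on a fixed smooth surface, and you settle it with the two standard properties of the log-canonical threshold (Zariski lower semicontinuity and constructibility/finiteness of attained values on a finite-type base). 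Your handling of the one delicate point --- that lower semicontinuity only gives openness of strict superlevel sets, remedied by the finiteness of the value set --- is exactly right, and the alternative phrasing via the closed incidence locus in $H\times\PP^{2}$ and properness of the projection is a clean way to package it. What each approach buys: the paper's argument is shorter and needs no threshold machinery, but it leans on the fairly heavy Du Bois technology and on the fibres being Gorenstein; yours is more in the spirit of the rest of the paper (which works throughout with the branch curve and the half-log-canonical condition), is insensitive to the Gorenstein hypothesis on the covers, but requires importing the semicontinuity and constructibility results for the log-canonical threshold, which should be given explicit references (e.g.\ Demailly--Koll\'{a}r or Musta\c{t}\u{a}) if this route is taken. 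Your closing ``elementary'' sketch via the classification of forbidden germs is plausible but, as you note yourself, the verification that each forbidden type is cut out by closed incidence conditions is where the real work would lie, so it should not be presented as a complete alternative without that check.
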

\begin{proof}
We will use a proof strategy outlined by Kov\'{a}cs \cite{MO262045}.
By construction, every fibre $\Xscr_{B}$ of $\pi\colon\Xscr\to H$ is a double-cover of the plane, hence Gorenstein.
A Gorenstein singularity is semi-log-canonical if and only if it is Du Bois; thus, the locus $U\subset H$ with semi-log-canonical fibres equals the locus with Du Bois-fibres.
Since $H$ is smooth, the Du Bois-locus is open, by Kov\'{a}cs' and Schwede's inversion of adjunction for Du Bois pairs \cite[Theorem A; Lemma 4.5]{KS:2016}.
\end{proof}

Restricting the family $\Xscr$ to $U$ defines a morphism $U\to \XXX$, $B\mapsto [\Xscr_{B}]$, which is surjective by Corollary~\ref{cor:is-double-cover-of-pp2}.
Moreover, it induces an isomorphism of stacks:

\begin{theorem}\label{theorem:stack-isomorphism}
As above, let $U\subset|\osh{\PP^{2}}(8)|$ be the space of half-log-canonical plane curves of degree $8$.
Taking the double-cover branched over the curves induces an isomorphism of algebraic stacks
$$[U/\PGL(3,\CC)]\to \XXXst.$$
In particular, $\XXXst$ is smooth.
\end{theorem}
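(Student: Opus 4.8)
The plan is to construct an explicit quasi-inverse $\Psi$ to the double-cover morphism $\Phi\colon[U/\PGL(3,\CC)]\to\XXXst$, which is itself well defined by Proposition~\ref{prop:curve-to-surface}. Fix a family $f\colon\Xscr\to S$ in $\XXXst$ and set $\Ecal:=f_{*}\omega_{\Xscr/S}$. The first step is to show that $\Ecal$ is locally free of rank $3$ and that its formation commutes with arbitrary base change. Since $f$ is flat with Gorenstein fibres, $\omega_{\Xscr/S}$ is invertible, and relative duality for the relative dimension $2$ morphism $f$ identifies $\Ecal$ with the dual of the top direct image $R^{2}f_{*}\osh{\Xscr}$. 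The latter commutes with base change for degree reasons; as $h^{2}(\osh{\Xscr_{s}})=\p_{g}(\Xscr_{s})=3$ is constant on fibres by Remark~\ref{rmk:basic-numerics}, it is locally free of rank $3$, and hence so is $\Ecal$.

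Next I would build the relative canonical map. The surjection $f^{*}\Ecal\to\omega_{\Xscr/S}$ --- surjective by Nakayama, since it is surjective on each fibre by Corollary~\ref{cor:is-double-cover-of-pp2} and $\Ecal$ commutes with base change --- defines an $S$-morphism $\phi\colon\Xscr\to\PP(\Ecal)$ which is fibrewise the canonical double cover. Both $\Xscr$ and $\PP(\Ecal)$ are flat over $S$ and $\phi$ is finite and flat on every fibre, so $\phi$ is finite and flat by the fibrewise flatness criterion; being of fibre degree $2$, the trace splitting in characteristic zero gives $\phi_{*}\osh{\Xscr}=\osh{\PP(\Ecal)}\oplus\Lcal^{-1}$ with $\Lcal$ a line bundle restricting to $\osh{\PP^{2}}(4)$ on each fibre. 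The associated branch divisor $B\subset\PP(\Ecal)$ is then a relative Cartier divisor, flat over $S$ because its fibres are octics of constant Hilbert polynomial, and half-log-canonical on each fibre. Finally, $\PP(\Ecal)$ is the projectivisation of a vector bundle, hence Zariski-locally trivial; its bundle of frames is a $\PGL(3,\CC)$-torsor $P\to S$, over which $B$ pulls back to a relative octic cutting out a $\PGL(3,\CC)$-equivariant morphism $P\to|\osh{\PP^{2}}(8)|$ landing in $U$ (Lemma~\ref{lemma:slc-locus-is-open}). This datum is precisely an $S$-point of $[U/\PGL(3,\CC)]$ and defines $\Psi$ on objects; compatibility with base change, hence functoriality, follows from the base-change property of $\Ecal$.

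It remains to verify that $\Phi$ and $\Psi$ are mutually quasi-inverse. The composite $\Psi\circ\Phi$ is naturally isomorphic to the identity by the last assertion of Proposition~\ref{prop:curve-to-surface}, where it is shown that for the double cover of a relative octic one recovers $f_{*}\omega_{\Xscr/S}$ as free and the original octic as the branch divisor of $\Xscr\to\PP(f_{*}\omega_{\Xscr/S})$; thus the torsor and the equivariant map are recovered up to canonical isomorphism. For $\Phi\circ\Psi\cong\id$, starting from $f\colon\Xscr\to S$ one forms $(\PP(\Ecal),B)$ as above and takes the double cover branched over $B$. A double cover branched over a fixed Cartier divisor is determined by a choice of square root of the associated line bundle, and since $\Pic(\PP^{2})$ is torsion-free the square root is unique on each fibre; hence the reconstructed double cover is canonically isomorphic to $\Xscr$ over $S$.

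The smoothness statement is then immediate: $U$ is open in $|\osh{\PP^{2}}(8)|\cong\PP^{44}$, hence smooth, and $\PGL(3,\CC)$ is a smooth group scheme, so the quotient stack $[U/\PGL(3,\CC)]$ is smooth, and the isomorphism transports this to $\XXXst$. The step I expect to be the main obstacle is the passage from the fibrewise Corollary~\ref{cor:is-double-cover-of-pp2} to the relative statement over a possibly non-reduced base $S$: one must ensure that the canonical evaluation map stays surjective, that $\phi$ is an honest finite flat double cover, and that the branch octic $B$ is a genuine relative Cartier divisor flat over $S$. The base-change control on $\Ecal=f_{*}\omega_{\Xscr/S}$ obtained in the first step is exactly what makes these upgrades go through.
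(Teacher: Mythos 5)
Your construction of $\Psi$ on families of surfaces is sound and in fact supplies details that the paper's proof leaves implicit (local freeness and base-change compatibility of $f_{*}\omega_{\Xscr/S}$, surjectivity of the relative evaluation map, flatness of the relative canonical morphism via the fibrewise criterion); up to that point you are following essentially the same route as the paper, with Proposition~\ref{prop:curve-to-surface} as the engine. The genuine gap is on the other side of the equivalence: an object of $[U/\PGL(3,\CC)]$ over $S$ is a $\PGL(3,\CC)$-torsor $P\to S$ that is a priori only \'{e}tale-locally trivial, so the associated $\PP^{2}$-bundle $\PP=P\times^{\PGL(3,\CC)}\PP^{2}$ is a Brauer--Severi scheme which need not be the projectivisation of a vector bundle. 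You never confront this. It affects two steps. First, $\Phi$ is not ``well defined by Proposition~\ref{prop:curve-to-surface}'' on such objects, since that proposition only builds the double cover of an octic inside the \emph{trivial} bundle $\PP^{2}_{S}$; one must either exhibit a global square root of $\osh{\PP}(\Bscr)$ on $\PP$ (e.g.\ $(\osh{\PP}(\Bscr)\otimes\omega_{\PP/S}^{\otimes 2})^{\otimes 2}$) or glue the \'{e}tale-local double covers using uniqueness of the square root. Second, and more importantly, your verification of $\Psi\circ\Phi\cong\id$ again cites Proposition~\ref{prop:curve-to-surface}, which applies only after a trivialisation; as written your argument therefore only identifies $\XXXst$ with the full substack of objects whose torsor is Zariski-locally trivial.

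This is precisely the point where the paper's proof does its one piece of non-formal work: it notes that the presence of the relative octic $\Bscr\subset\PP$ kills the Brauer class, because $\osh{\PP}(-3K_{\PP/T}-\Bscr)$ is a line bundle restricting to $\osh{\PP^{2}}(1)$ on every geometric fibre, so $\PP$ is Zariski-locally trivial and nothing is missed. Your setup can absorb the same point by descent instead: since $[U/\PGL(3,\CC)]$ is a stack for the \'{e}tale topology and isomorphisms of objects descend, you may trivialise $P$ \'{e}tale-locally, obtain the natural isomorphism $(\Psi\circ\Phi)(\xi)\cong\xi$ there from Proposition~\ref{prop:curve-to-surface}, and glue; because $\Psi$ always outputs the torsor of the vector bundle $\Ecal=f_{*}\omega_{\Xscr/S}$, this recovers the Brauer triviality as a by-product. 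Either repair is short, but one of them is required. The remaining steps of your proposal ($\Phi\circ\Psi\cong\id$ via torsion-freeness of $\Pic(\PP^{2})$, and smoothness of the quotient stack) are correct.
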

\begin{proof}
Using the notations introduced above, we will construct the inverse morphism and show that it is an isomorphism.
By Proposition~\ref{prop:curve-to-surface}, any $(f\colon \Xscr\to T)\in (\XXXst)_{T}$ is a relative double-cover of a projective bundle $\PP_{T}(f_{*}\omega_{\Xscr/T})$ with a relative branch divisor $\Bscr\subset \PP_{T}(f_{*}\omega_{\Xscr/T})$.
Thus, the associated $\PGL(3,\CC) = \mathrm{Aut}(\PP^2)$-torsor $P$ comes with an induced $\PGL(3,\CC)$-equivariant morphism $P\to U$ corresponding to the family of octics.
This defines an element of $[U/\PGL(3,\CC)]_{T}$.
It is straightforward to check that this assignment is functorial for pull-backs along morphisms $T'\to T$, thus defining a morphism $\XXXst\to[U/\PGL(3,\CC)]$.
It is fully faithful and its essential image consists of those objects whose underlying $\PGL(3,\CC)$-torsors are locally trivial in the Zariski topology, yet again by Proposition~\ref{prop:curve-to-surface}.
Thus, it suffices to show that this is the case for all $\PGL(3,\CC)$-torsors $P$ with a $\PGL(3,\CC)$-equivariant morphism $P\to U$.
In other words, what we have to show is that if $\PP\to T$ is an \'{e}tale-locally trivial $\PP^2$-bundle with a relative divisor $\Bscr\subset\PP$ which is fibre-wise an octic, then $\PP$ is locally trivial in the Zariski topology.
But since $\osh{\PP}(-3K_{\PP/T}-\Bscr)$ restricts to $\osh{\PP^2}(1)$ on the geometric fibres, this is indeed the case.
\end{proof}

For later reference, we observe that the classifying morphism $U\to \XXX$ maps $\PGL(3,\CC)$-invariant locally closed sets to locally closed sets, hence, also $\PGL(3,\CC)$-invariant stratifications to stratifications.
For this, it is enough to show that it is a geometric quotient in the sense of Mumford \cite[Definition 0.6]{GIT}, for then $\XXX\cong U/G$ carries the quotient topology.

\begin{corollary}\label{cor:classifying-map-submersive}
The coarse moduli space $U/\PGL(3,\CC)\cong \XXX$ is a quasi-projective scheme and the classifying morphism $U\to U/\PGL(3,\CC)$ is a geometric quotient.
In particular, it maps $\PGL(3,\CC)$-invariant locally closed sets to locally closed sets.
\end{corollary}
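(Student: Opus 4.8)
The plan is to realise $U\to U/\PGL(3,\CC)$ as the restriction of a GIT geometric quotient and then to identify the target with the coarse space $\XXX$ furnished by Theorem~\ref{theorem:stack-isomorphism}. The natural action of $\PGL(3,\CC)$ on the linear system $|\osh{\PP^2}(8)|\cong\PP^{44}$ lifts to a linear action of the reductive group $\SL(3,\CC)$ on $\PP^{44}$ with its tautological polarisation, and since the central $\mu_{3}\subset\SL(3,\CC)$ acts trivially, the notions of (semi)stability and the resulting quotients for $\SL(3,\CC)$ and $\PGL(3,\CC)$ coincide. By Mumford's Geometric Invariant Theory \cite{GIT}, the open locus $(\PP^{44})^{s}$ of stable points carries a geometric quotient $(\PP^{44})^{s}\to(\PP^{44})^{s}/\SL(3,\CC)$ onto a quasi-projective scheme.

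The key step is to prove that $U\subseteq(\PP^{44})^{s}$, i.e.\ that every half-log-canonical octic is GIT-stable. First I would invoke the Hilbert--Mumford numerical criterion: it suffices to show that the Mumford weight $\mu(B,\lambda)$ is strictly positive for every non-trivial one-parameter subgroup $\lambda$ of $\SL(3,\CC)$. For a plane curve of degree $d$ this weight can be bounded below in terms of the log-canonical threshold, the upshot being a stability criterion of the shape $\mathrm{lcth}(\PP^2,B)>\tfrac{3}{8}$ in the case $d=8$. Since $B\in U$ means precisely $\mathrm{lcth}(\PP^2,B)\geq\tfrac12>\tfrac38$, stability then follows for all of $U$; if one prefers to avoid the general criterion, the numerical estimate can instead be checked directly against the explicit list of admissible singularities in Proposition~\ref{prop:possible-curve-singularities}. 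This verification is the main obstacle, as it is the only point at which the half-log-canonical hypothesis does work beyond what Theorem~\ref{theorem:stack-isomorphism} already supplies.

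Granting $U\subseteq(\PP^{44})^{s}$, the classifying morphism is obtained by restricting the GIT quotient to the $\PGL(3,\CC)$-invariant open subset $U$. A geometric quotient restricts to a geometric quotient over an invariant open, so $U\to U/\PGL(3,\CC)$ is a geometric quotient in the sense of Mumford \cite[Definition~0.6]{GIT}, and $U/\PGL(3,\CC)$ is an open, hence quasi-projective, subscheme of $(\PP^{44})^{s}/\SL(3,\CC)$. Its points are the $\PGL(3,\CC)$-orbits of octics, which by Theorem~\ref{theorem:stack-isomorphism} are exactly the isomorphism classes of the associated Gorenstein stable surfaces; by uniqueness of coarse moduli spaces this identifies $U/\PGL(3,\CC)$ with $\XXX$. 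Finally, the \emph{in particular} is immediate: a geometric quotient is submersive, so $\XXX$ carries the quotient topology, and a $\PGL(3,\CC)$-invariant subset $W\subseteq U$ satisfies $W=\phi^{-1}(\phi(W))$; submersiveness forces $\phi$ to carry invariant closed (respectively open) sets to closed (respectively open) sets, whence $\phi(W)$ is locally closed whenever $W$ is.
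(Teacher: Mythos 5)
Your proof is correct, and its core coincides with the paper's: both arguments hinge on the fact that half-log-canonical octics are GIT-stable and then exploit Mumford's theorem that the quotient of the stable locus is geometric. The packaging differs in two respects. First, the paper does not re-derive stability: it cites Hacking \cite[Section~10]{Hacking:2004} (via Remark~\ref{rem:GIT-embedding}), whose result is precisely the lct-type criterion you sketch (a curve $B$ of degree $8$ with $(\PP^2,(\tfrac38+\epsilon)B)$ log-canonical is GIT-stable); your proposed Hilbert--Mumford verification is the content of that theorem, so what you flag as ``the main obstacle'' is exactly the point the paper outsources to the literature. Second, and more substantively, the paper obtains quasi-projectivity of $U/\PGL(3,\CC)\cong\XXX$ from the KSBA theory of Koll\'ar--Shepherd-Barron \cite{KSB:1988} and Alexeev \cite{Alexeev:1994}, using the GIT factorisation $U\to H^s\to H^s/\PGL(3,\CC)$ only to upgrade the categorical quotient to a geometric one; you instead get quasi-projectivity for free, as $U$ is a saturated invariant open inside $H^s$, so its image is an open subscheme of the quasi-projective GIT quotient, and you then identify this with $\XXX$ via Theorem~\ref{theorem:stack-isomorphism} and uniqueness of coarse moduli spaces. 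Your route is thus more self-contained (it does not lean on the deep KSBA quasi-projectivity result), at the price of carrying out the stability estimate; the paper's route is shorter given the references it already has in hand. Your treatment of the ``in particular'' clause (submersiveness plus $W=\phi^{-1}(\phi(W))$ for invariant $W$, applied to $\overline{W}$ and $\overline{W}\setminus W$) is the standard argument the paper leaves implicit, and it is sound.
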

\begin{proof}
Since $\XXXst$ has a quasi-projective moduli space $\XXX$ by the works of Koll\'{a}r, Shepherd-Barron \cite{KSB:1988} and Alexeev \cite{Alexeev:1994}, so does $[U/\PGL(3,\CC)]$.
This space, $U/\PGL(3,\CC)$, is then the categorical quotient in the category of schemes.
On the other hand, since half-log-canonical plane octics can be shown to be GIT-stable, cf.\ Remark~\ref{rem:GIT-embedding}, the classifying map $U\to U/\PGL(3,\CC)$ equivariantly factors through the GIT-quotient $H^s\to H^s//\PGL(3,\CC)$, which is geometric by Mumford \cite[Theorem~1.10, cf.\ Chapter~1 \S4]{GIT}.
(Here, $H^s\subset|\osh{\PP^{2}}(8)|$ is the locus of GIT-stable points with respect to the $\PGL(3,\CC)$-action.)
But then the classifying morphism $U\to U/G$ has to be geometric as well, as claimed.
\end{proof}

\begin{remark}
The moduli space under consideration is thus birationally equivalent to the moduli space of plane curves of degree $8$, for which, according to B\"ohning, Graf von Bothmer and Kr\"oker \cite[p.\ 506]{BGvBK:2009}, it is not known whether it is rational or not.
\end{remark}

\begin{question}
It is tempting to call $U/\PGL(3,\CC)$ the moduli space of half-log-canonical octics, but it is not obvious whether $U/\PGL(3,\CC)$ is really a moduli space of curves, or one of polarised curves.
\emph{Are there pairs of abstractly, but not projectively isomorphic half-log-canonical plane octics?}
Note that by Hassett \cite[Proposition~2.1]{Hassett:1999}, two abstractly isomorphic nodal plane octics are indeed projectively isomorphic.
\end{question}

For further questions and comparisons between $\XXXst$ and different moduli spaces of curves, see Chapter~\ref{section:further-remarks}.

\section{A Stratification of the moduli space}
\label{section:stratification}

As is well known, normalising usually does not work well in flat families.
For the moduli space at hand, we can get hands on this quite explicitly, in that we can cover it by strata on which the normalisation can be performed in families.
Recall from Proposition~\ref{prop:description-of-normalisation} that if $f\colon X\to\PP^2$ is the canonical double-cover with branch curve $B$, then $B = B'+2B''$ for reduced effective divisors $B',B''$ and the composition $\overline{f} = f\circ \pi\colon\overline{X}\to\PP^2$ is the double-cover branched over $B'$; furthermore, the conductor on $\overline{X}$ is the pull-back of $B''$. 
In other words, the non-reduced part of the branch curve controls the non-normal locus of $X$.
This motivates the first approximation to a stratification.

\begin{definition}[The (non-)normality stratification]
For a non-negative integer $0\leq a\leq 4$ we let $\Mfrak^{(a)}\subset\XXX$ be the locus of those surfaces whose branch divisor $B = B'+2B''\subset \PP^2$ for the canonical double-cover is such that $B'' = \floor{\tfrac{1}{2}B}$ is of degree $a$.
The open and dense subset consisting of the normal surfaces is denoted by $\Nfrak:=\Mfrak^{(0)}$.
\end{definition}

Since the degree of the branch divisors $B$ is eight, we clearly only need to consider $a=0,\dots,4$ to cover $\XXX$ as $\bigcup_{a=0}^4\Mfrak^{(a)}$.

\begin{proposition}\label{prop:normality-stratification}
For each $a = 0,\dots,4$, the subset $\Mfrak^{(a)}\subset\XXX$ is locally closed and its closure in $\XXX$ is $\olMfrak^{(a)} = \bigcup_{a\leq b\leq4}\Mfrak^{(b)}$.
\end{proposition}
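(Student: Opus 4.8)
The plan is to transfer the entire question to the level of plane curves and exploit the geometric quotient of Corollary~\ref{cor:classifying-map-submersive}. Writing $q\colon U\to\XXX$ for the classifying morphism, let $U^{(a)}\subset U$ be the locus of half-log-canonical octics $B=B'+2B''$ with $\deg B''=a$. This locus is $\PGL(3,\CC)$-invariant and saturated, with $q^{-1}(\Mfrak^{(a)})=U^{(a)}$, so that $\Mfrak^{(a)}=q(U^{(a)})$. Since $q$ is a geometric quotient, it is a topological quotient map and $\XXX$ carries the quotient topology; hence a $\PGL(3,\CC)$-invariant set is locally closed exactly when its image is, and since the closure of an invariant set is again invariant (thus saturated) one gets $\overline{\Mfrak^{(a)}}=q(\overline{U^{(a)}})$, the closure on the right taken in $U$. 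It therefore suffices to show that $U^{(a)}$ is locally closed in $U$ and that its closure in $U$ equals $U^{(\ge a)}:=\bigcup_{a\le b\le 4}U^{(b)}$.

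For local closedness I would show that $U^{(\ge a)}$ is closed. For $0\le b\le 4$ consider the multiplication morphism
\[
\Phi_b\colon \PP\bigl(H^0(\PP^2,\osh{\PP^2}(8-2b))\bigr)\times\PP\bigl(H^0(\PP^2,\osh{\PP^2}(b))\bigr)\longrightarrow|\osh{\PP^2}(8)|,\quad([g],[h])\mapsto[g\,h^2].
\]
Its source is complete, so its image $Z_b$ is closed. By Proposition~\ref{prop:description-of-normalisation} every $B\in U$ has all components of multiplicity at most two, so the doubled part $B''$ is a reduced form of degree $\le 4$; consequently $\deg B''\ge a$ holds if and only if $B\in Z_b$ for some $b\in\{a,\dots,4\}$ (for the nontrivial direction, $h^2\mid B$ with the multiplicity bound forces $h$ reduced and each of its components doubled in $B$). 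Thus $U^{(\ge a)}=U\cap\bigcup_{a\le b\le 4}Z_b$ is closed, $U^{(a)}=U^{(\ge a)}\setminus U^{(\ge a+1)}$ is locally closed, and the inclusion $\overline{U^{(a)}}\subseteq U^{(\ge a)}$ is immediate.

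The remaining task, the reverse inclusion $U^{(b)}\subseteq\overline{U^{(a)}}$ for $b>a$, is the heart of the matter. The idea is to realise any $B_0\in U^{(b)}$ as the limit of a family in $U^{(a)}$ by partially \emph{undoubling} $B_0''$: selecting components of $B_0''$ of total degree $b-a$ and replacing each such $2D$ by a moving pair $D_t+D_t'$ of reduced curves with $D_t,D_t'\to D$ as $t\to 0$. For $t\ne 0$ this lowers the degree of the doubled part to exactly $a$ while the limit recovers $B_0$; one then checks that the generic member is again half-log-canonical, the only new singularities being transverse ($A_1$) intersections of $D_t$ with $D_t'$ and with $B_0'$, which are admissible by Proposition~\ref{prop:possible-curve-singularities}.

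The main obstacle lies precisely in carrying out this last step uniformly. Undoubling a single component $D$ drops the doubled degree by $\deg D$, so reaching an arbitrary intermediate value $a$ requires a subcollection of the components of $B_0''$ of total degree exactly $b-a$, together with a degeneration preserving half-log-canonicity along the whole family. This is the point where the purely numerical invariant $\deg B''$ no longer suffices and the finer component structure of the strata — and the list of admissible configurations from the appendix on plane curves — must enter; establishing that such half-log-canonical degenerations always exist, with generic fibre of doubled degree exactly $a$, is the genuinely substantive part of the proof.
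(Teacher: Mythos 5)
Your reduction to the curve side and your proof of local closedness follow the paper's argument almost verbatim: the paper likewise passes to $U$ via \ref{cor:classifying-map-submersive} and writes the locus $\{\deg B''\ge a\}$ as the intersection of $U$ with a union of images of multiplication morphisms from products of complete linear systems (it uses the presentation of $\{\deg B_{\mathrm{red}}\le 8-a\}$ indexed by partitions $\sum_i a_in_i=8$, $\sum_i n_i\le 8-a$, rather than your maps $\Phi_b$, but the mechanism is identical). Up to and including the containment $\overline{\Mfrak^{(a)}}\subseteq\bigcup_{a\le b\le 4}\Mfrak^{(b)}$ your argument is complete and correct.

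The step you flag as ``the heart of the matter'' is indeed a genuine gap in your write-up --- but it is one you could not have closed, because the reverse inclusion is false for $a=1,2,3$, and your own closedness argument already shows this. Take $B_0=B'+2B''$ with $B'$ a general smooth quartic and $B''$ a smooth conic meeting it transversally, so $B_0\in U^{(2)}$. Since $U^{(1)}\subseteq Z_1=\mathrm{im}\,\Phi_1$ and $Z_1$ is closed, every $B\in\overline{U^{(1)}}$ has defining form $g\,\ell^2$ with $\ell$ linear; but by unique factorisation the only repeated irreducible factor of the form of $B_0$ is the irreducible quadric cutting out $B''$, so $B_0\notin Z_1$ and hence $\Mfrak^{(2)}\not\subseteq\overline{\Mfrak^{(1)}}$ (the same factorisation argument rules out \emph{any} degeneration, not just your undoubling construction, which works exactly when $B_0''$ has a subdivisor of whole components of degree $b-a$). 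The paper's own proof disposes of this step with ``from the above presentation it easily follows that $\overline{f^{-1}(\Mfrak^{(a)})}=U_a$'', which is unjustified and, by the above, not correct as an equality. What does hold --- and is all that is used later (compare \ref{prop:irrationality-stratification}, which only claims containment) --- is that each $\Mfrak^{(a)}$ is locally closed with closure contained in $\bigcup_{a\le b\le 4}\Mfrak^{(b)}$, and that much your proposal fully establishes.
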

%
\begin{proof}
As before, we let $U\subset |\osh{\PP^{2}}(8)|$ be the open sub-scheme parametrising half-log-canonical plane octic curves and let $f\colon U\to \XXX$ be the classifying map.
By Corollary~\ref{cor:classifying-map-submersive}, it suffices to show that the pre-images $f^{-1}(\Mfrak^{(a)})$ define such a stratification of $U$.
The locus $U_{a}\subset U$ of half-log-canonical curves $B = B'+2B''$ with $\deg(B'') \geq a$ may alternatively be characterised as the locus of $B\in U$ such that $\deg(B_{\textnormal{red}})\leq 8-a$.

Note that quite generally, the space $V_{n}\subset |\osh{\PP^{2}}(8)|$ consisting of the divisors $B$ such that $\deg(B_{\textnormal{red}})\leq n$ is closed, being a union of closed sub-spaces
$$V_{n} = \bigcup_{\substack{\sum_{i}a_{i}n_{i}=8,\\\sum_{i}n_{i}\leq n}}\sum_{i}a_{i}|\osh{\PP^{2}}(n_{i})|,$$
where $\sum_{i}a_{i}|\osh{\PP^{2}}(n_{i})|$ is shorthand notation for the image of the morphism $$\prod_{i}|\osh{\PP^{2}}(n_{i})|\to|\osh{\PP^{2}}(8)|,\;(B_{i})_{i}\mapsto\sum_{i}a_{i}B_{i}.$$

Therefore, $U_{a} = U\cap V_{8-a}$ is closed, $f^{-1}(\Mfrak^{(a)}) = U_{a}\setminus U_{a+1}$ is locally closed and from the above presentation it easily follows that $\overline{f^{-1}(\Mfrak^{(a)})} = U_{a}$.
This completes the proof.
\end{proof}

This very rough stratification will be refined in the following sections.

\subsection{The locus of normal surfaces}
We now turn to the stratification of the moduli space $\Nfrak = \Mfrak^{(0)}$ of normal Gorenstein stable surfaces with $K_{X}^2 = 2$ and $\chi(\osh{X}) = 4$.
We stratify $\Nfrak$ according to the number of irrational singularities, their degree and whether they are simply elliptic or cusps.

\begin{definition}[The irrationality stratification]
Given non-negative integers $a,b,c$ and $d$ satisfying $a+b+c+d\leq 4$ we define the subset $\YYY{a}{b}{c}{d}$ of the stratum of normal surfaces $\Nfrak$ consisting of those surfaces having precisely $a$ simply elliptic singularities of degree $1$, $b$ cusps of degree $1$, $c$ simply elliptic singularities of degree $2$ and $d$ cusps of degree $2$.
To ease notation, indices with exponent $0$ are omitted, an exponent of $1$ will be omitted and so on, e.g., $2^1 = 2$, $1^2 = 11$, etc.
\end{definition}
For example, an $X\in \Nfrak_{122\overline{2}} = \YYY{1}{0}{2}{1}$ has exactly one simply elliptic singularity of degree one, two simply elliptic singularities of degree two and one cusps of degree two.
The empty list of degrees corresponds to the stable surfaces with only canonical singularities, i.e., $\Nfrak_{\emptyset} = \YYY{0}{0}{0}{0} = \Mfrak_{2,4}$ is the dense open of canonical surfaces.

\begin{remark}
By definition, the loci $\YYY{a}{i}{b}{j}\subset \Nfrak$ are pair-wise disjoint and since the normal surfaces under investigation have at most four irrational singularities (Theorem~\ref{theorem:FPR} and Proposition~\ref{prop:normalisation-kodaira-neg-infinite}), $\Nfrak$ is indeed covered by the loci $\YYY{a}{i}{b}{j}$, as $a+b+i+j\leq 4$.
\end{remark}

\begin{remark}
Local singularity theory, most notably Brieskorn's result \cite{Brieskorn:1979}, implies that that a singularity of type $X_{p}$ may degenerate to a singularity of type $X_{q}$ with $q\geq p$ or certain singularities of type $Y_{\bullet,\bullet}$, but none of them can degenerate to a triple-point or to a milder quadruple-point.
Similarly, a $[3;3]$-point may degenerate more and more, or it may even degenerate to a quadruple-point, but none of the series $X_{\bullet}$ or $Y_{\bullet,\bullet}$.
This prevents certain strata to appear at the boundary of other strata.
For example, the boundary of $\Nfrak_{2}$ is covered by all strata parametrising surfaces with at least one (possibly degenerate) quadruple-point.
More generally, the closure of $\YYY{a}{b}{c}{d}$ in $\Nfrak$ is contained in the union of the strata $\YYY{a'}{b'}{c'}{d'}$ where $a'+b'\geq a+b$ and $b'\geq b$, as well as $c'+d'\geq c+d$ and $d'\geq d$.
\end{remark}


\begin{proposition}\label{prop:irrationality-stratification}
The strata $\YYY{a}{b}{c}{d}\subset\Nfrak$ are locally closed and the closure of a stratum is contained in a union of strata.
\end{proposition}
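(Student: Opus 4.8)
The plan is to transfer the problem to the space of branch curves and to deduce both assertions from the upper-semicontinuity of two classical local invariants. By Corollary~\ref{cor:classifying-map-submersive} the classifying morphism $f\colon U\to\XXX$ is a geometric quotient, so it sends $\PGL(3,\CC)$-invariant locally closed sets to locally closed sets; since the stratification is visibly $\PGL(3,\CC)$-invariant, it suffices to establish the two statements for the pre-images $f^{-1}(\YYY{a}{b}{c}{d})$ inside the open locus $U^{\circ}:=f^{-1}(\Nfrak)$ of reduced half-log-canonical octics. Over $U^{\circ}$ each surface is the double-cover of $\PP^{2}$ branched over a reduced octic $B$ (Corollary~\ref{cor:is-double-cover-of-pp2}), and the irrational singularities of $X$ are in bijection with the non-simple (non-du-Val) singular points of $B$; by the dictionary of Table~\ref{table:surface-vs-curve-singularities} the numbers $a,b,c,d$ count the points of $B$ of type $J_{10}$, of type $J_{2,\bullet}$, of type $X_{9}$, and of type $X_{\bullet}$ or $Y_{\bullet,\bullet}$, respectively.

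The key device is the integer-valued function $\Phi\colon U^{\circ}\to\ZZ^{4}$, $\Phi(B)=(c+d,\;d,\;a+b,\;b)$, whose value manifestly determines the whole tuple $(a,b,c,d)$; thus the stratum $f^{-1}(\YYY{a}{b}{c}{d})$ is exactly the fibre of $\Phi$ over $(c+d,d,a+b,b)$. I would prove that each component of $\Phi$ is upper-semicontinuous, i.e.\ that the loci $\{\,c+d\geq k\,\}$, $\{\,d\geq k\,\}$, $\{\,a+b\geq k\,\}$ and $\{\,b\geq k\,\}$ are all closed. Granting this, both conclusions are formal: a fibre $\Phi^{-1}(v)$ is the intersection of the closed sets $\{\,\Phi_{i}\geq v_{i}\,\}$ with the open sets $\{\,\Phi_{i}\leq v_{i}\,\}=U^{\circ}\setminus\{\,\Phi_{i}\geq v_{i}+1\,\}$, hence locally closed; and its closure is contained in $\bigcap_{i}\{\,\Phi_{i}\geq v_{i}\,\}$, which is the union of those strata $\YYY{a'}{b'}{c'}{d'}$ with $c'+d'\geq c+d$, $d'\geq d$, $a'+b'\geq a+b$ and $b'\geq b$. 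This is precisely the containment announced in the remark preceding the statement.

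To verify the upper-semicontinuity I would use that both the multiplicity of $B$ at a point and the Milnor number $\mu$ of an isolated plane-curve singularity are upper-semicontinuous in flat families. Since a half-log-canonical octic has no point of multiplicity exceeding $4$ and, by Proposition~\ref{prop:possible-curve-singularities}, every multiplicity-$4$ point is of type $X_{9}$ (when $\mu=9$) or of type $X_{\bullet}/Y_{\bullet,\bullet}$ (when $\mu\geq10$), the loci $\{\,c+d\geq k\,\}$ and $\{\,d\geq k\,\}$ are closed: multiplicity and $\mu$ are upper-semicontinuous, and carrying at least $k$ points subject to a closed local condition is itself a closed condition on $B$. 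For the two remaining components one identifies the $[3;3]$-points ($J_{10}$ and $J_{2,\bullet}$) among the multiplicity-$3$ points by the additional requirement that the tangent cone be a triple line and that $\mu\geq10$, and then $\{\,a+b\geq k\,\}$ and $\{\,b\geq k\,\}$ are closed by the same mechanism, provided one knows that no $[3;3]$-point can be lost along a degeneration.

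This last provision is where the real work lies, and it is the main obstacle. Multiplicity separates the degree-$1$ (triple) singularities from the degree-$2$ (quadruple) ones cleanly, but among triple points the du Val types $D_{\bullet},E_{6},E_{7},E_{8}$ and the irrational types $J_{10},J_{2,\bullet}$ can share the same Milnor number, so the separation genuinely requires the finer tangent-cone datum (equivalently the rational-versus-irrational dichotomy for the double-cover singularity), and one must check that this datum cuts out a closed condition. The subtler point, responsible for the correction recorded in Remark~\ref{rmk:error}, is that $\{\,a+b\geq k\,\}$ is closed only if a $[3;3]$-point cannot migrate to a quadruple point under specialisation; establishing that the counts $a+b$ and $b$ are indeed non-decreasing along degenerations is exactly the content of Brieskorn's adjacency results~\cite{Brieskorn:1979} and of the explicit analysis of admissible singularity configurations on octics carried out in Appendix~\ref{section:curves}.
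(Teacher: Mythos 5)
Your opening move --- pushing everything down to the space $V$ of reduced half-log-canonical octics via Corollary~\ref{cor:classifying-map-submersive} and re-reading $a,b,c,d$ as counts of $J_{10}$-, $J_{2,\bullet}$-, $X_{9}$- and $X_{\bullet}/Y_{\bullet,\bullet}$-points of the branch curve --- is exactly the reduction the paper performs; after that the paper merely asserts that the pre-images are locally closed ``by elementary plane curve geometry'' and omits the details, so you are trying to supply an argument the paper itself does not give. The problem is that your mechanism does not close that gap. Everything rests on the four loci $\{c+d\geq k\}$, $\{d\geq k\}$, $\{a+b\geq k\}$, $\{b\geq k\}$ being closed in $V$; you rightly isolate the last two as the crux, but then defer them to ``Brieskorn's adjacency results and the explicit analysis in Appendix~\ref{section:curves}''. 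Neither reference delivers. Appendix~\ref{section:curves} only constrains which configurations of non-simple singularities can coexist on a single curve and says nothing about specialisation; and Brieskorn's theorem on the hyperbolic series points the \emph{wrong} way: $T_{p,q,r}$ is adjacent to $T_{p',q',r'}$ whenever $(p',q',r')\leq(p,q,r)$ up to permutation, so for instance $X_{12}=T_{2,4,7}$ deforms into $J_{2,1}=T_{2,3,7}$. Read as a specialisation, a degenerate $[3;3]$-point of type $J_{2,1}$ can be lost to a quadruple-point of type $X_{12}$ in the limit, and \emph{both} germs are half-log-canonical. Since such a germ has Milnor number far below the bound of Lemma~\ref{lemma:milnor-number-stuff}, there is no evident global obstruction to realising this adjacency inside $V$, so the closedness of $\{a+b\geq 1\}$ --- and with it the componentwise upper-semicontinuity of your $\Phi$ --- is at best unproved and quite possibly false. (The remark preceding the proposition asserts the contrary, but it is itself stated without proof, so you cannot lean on it.)

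What survives is the multiplicity argument: on a reduced half-log-canonical octic no point has multiplicity exceeding $4$, non-simple points cannot collide by Lemma~\ref{lemma:milnor-number-stuff}, and simplicity of a germ is an open condition, so the total number of non-simple points and the number $c+d$ of quadruple-points are genuinely upper-semicontinuous; hence $\{a+b+c+d=m\}$ and $\{c+d=n\}$ are locally closed. (Even here you should not wave at ``carrying at least $k$ points subject to a closed local condition is a closed condition'' --- that needs the no-collision statement.) The natural repair is to prove semicontinuity of $b$ and of $d$ only \emph{relative} to these locally closed pieces: once $c+d$ is frozen, a triple point can no longer escape to multiplicity four, the perfect-cube tangent-cone condition and the Milnor-number thresholds do become closed conditions there, and a finite intersection of locally closed sets is locally closed. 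As written, however, your argument overstates what the cited results give and leaves the one decisive semicontinuity claim unestablished, so the proof is incomplete.
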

\begin{proof}
Let $U\subset |\osh{\PP^{2}}|$ be the locus parametrising half-log-canonical plane octics and let $V\subset U$ be the open sub-space parametrising reduced curves.
Then the pre-images of the strata under the classifying morphism $V\to\Nfrak\subset\XXX$ are disjoint and $\PGL(3,\CC)$-invariant by construction.
By Corollary~\ref{cor:classifying-map-submersive}, it suffices to show that these pre-images are locally closed.
This can be shown for each case separately by elementary plane curve geometry.
We omit the details and conclude the proof.
\end{proof}

\begin{remark}
The motivation to consider not just the stratification according the number and degree of irrational singularities, which is usually enough to get control over the birational geometry of the minimal resolution, but to also distinguish between simply elliptic and cuspidal singularities, comes from the relevance to the mixed Hodge structure discussed in Section~\ref{subsection:hodge}.
\end{remark}

\subsubsection{The strata of the irrationality stratification}
We are ready to state and prove the main results about the irrationality stratification.
\begin{figure}\centering
\begin{tikzpicture}[on grid]
  \node (N) {$\Nfrak_{\emptyset}$};
  \node (N2) [below left= 3.5em and 5em of N] {$\Nfrak_{2}$};
  \node (N1) [below right= 3.5em and 5em of N] {$\Nfrak_{1}$};
  \node (N22) [below left= 4em and 5em of N2] {$\Nfrak_{22}$};
  \node (N12') [below right= 4em and 2em of N2] {$\Nfrak_{12}'$};
  \node (N12'') [below left= 4em and 2em of N1] {$\Nfrak_{12}''$};
  \node (N11) [below right= 4em and 5em of N1] {$\Nfrak_{11}$};
  \node (N222) [below left= 4em and 4em of N22] {$\Nfrak_{2^3}$};
  \node (N122') [below left= 4em and 5em of N12'] {$\Nfrak_{122}'$};
  \node (N122'') [below left= 4em and 2em of N12'] {$\Nfrak_{122}''$};
  \node (N112') [below left= 4em and 4em of N12''] {$\Nfrak_{112}'$};
  \node (N112'') [below = 4em of N12''] {$\Nfrak_{112}''$};
  \node (N112''') [below right=4em and 4em of N12''] {$\Nfrak_{112}'''$};
  \node (N111') [below right= 4em and 2em of N11] {$\Nfrak_{1^3}'$};
  \node (N111'') [below right= 4em and 5em of N11] {$\Nfrak_{1^3}''$};
  \node (N2222) [below=4em of N222] {$\Nfrak_{2^4}$};
  \node (N1112') [below left =4em and 4em of N111'] {$\Nfrak_{1^32}'$};
  \node (N1112'') [below left=4em and 2em of N111''] {$\Nfrak_{1^32}''$};
  \path (N) edge (N2)
  		    edge (N1);
  \path (N2) edge (N22)
  		     edge (N12')
  		     edge (N12'');
  \path (N1) edge (N11)
  		     edge (N12')
  		     edge (N12'');
  \path (N22) edge (N222)
  		      edge (N122')
  		      edge (N122'');
  \path (N12') edge (N122')
  			   edge (N122'')
  			   edge (N112')
  			   edge (N112'');
  \path (N12'') edge (N122'')
  			    edge (N112'')
  			    edge (N112''');
  \path (N11) edge (N111')
              edge (N111'')
              edge (N112')
              edge (N112'')
              edge (N112''');
  \path (N222) edge (N2222);
  \path (N111') edge (N1112');
  \path (N111'') edge (N1112'');
  \path (N112'') edge (N1112'');
  \path (N112''') edge (N1112')
  				  edge (N1112'');
\end{tikzpicture}
\caption{The degeneration diagram showing the components of the strata parametrising surfaces with simply elliptic singularities}\label{fig:degenerations-simple-elliptic}
\end{figure}
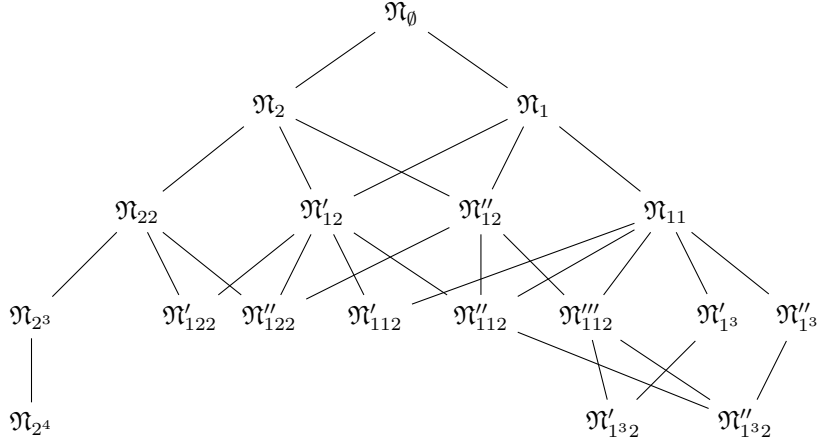

\begin{theorem}\label{thm:strata-dimensions}
All strata $\YYY{a}{b}{c}{d}$ with $a+b+c+d\leq 3$ and the two strata $\Nfrak_{1^32}$ and $\Nfrak_{2^4}$ are equidimensional of expected dimension $36-9 a-10b-8c-9d$.
The remaining strata are empty.
Furthermore, the irreducible components of the strata are pair-wise disjoint.
\end{theorem}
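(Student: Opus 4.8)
The plan is to transport the whole question to the space of branch curves. By Theorem~\ref{theorem:stack-isomorphism} and Corollary~\ref{cor:classifying-map-submersive}, the classifying morphism from the locus $V\subset|\osh{\PP^2}(8)|$ of \emph{reduced} half-log-canonical octics onto $\Nfrak$ is a geometric quotient for the $\PGL(3,\CC)$-action. Using the dictionary of Table~\ref{table:surface-vs-curve-singularities}, each stratum $\YYY{a}{b}{c}{d}$ pulls back to the $\PGL(3,\CC)$-invariant, locally closed locus $\widetilde W\subset V$ of reduced octics carrying exactly $a$ points of type $J_{10}$, $b$ of type $J_{2,\bullet}$, $c$ of type $X_9$ and $d$ of type $X_\bullet$ or $Y_{\bullet,\bullet}$, and only simple singularities besides. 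Since an octic bearing an irrational singularity has finite stabiliser in $\PGL(3,\CC)$ on a dense open set, I would obtain $\dim\YYY{a}{b}{c}{d}=\dim\widetilde W-8$, with irreducible components matching up under the quotient; so it suffices to analyse $\widetilde W$.

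First I would pin down the codimension a single such singularity imposes on $|\osh{\PP^2}(8)|=\PP^{44}$, by a local jet computation fibred over the singular point (and, for the triple points, its tangent line). An ordinary quadruple point $X_9$ is cut out by the $\binom{5}{2}=10$ conditions ``multiplicity $\ge4$'', which after letting the point vary gives codimension $10-2=8$. A point of type $J_{10}$ requires multiplicity $\ge3$ ($6$ conditions), tangent cone a perfect cube ($3$ conditions once the tangent line is fixed) and, after one blow-up, an infinitely near triple point ($3$ further conditions: the three lowest coefficients of the strict transform must vanish); fibring over the $3$-dimensional space of (point, tangent line) yields codimension $6+3+3-3=9$. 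The cuspidal types $J_{2,\bullet}$ and $X_\bullet,Y_{\bullet,\bullet}$ are each one vanishing-discriminant condition deeper than $J_{10}$ and $X_9$, giving codimensions $10$ and $9$; the remaining more degenerate analytic types in each qualitative class impose strictly more conditions and hence lie in the boundary, not affecting the dimension. I would then check that at distinct points in general position these conditions are independent, i.e.\ the jet-evaluation map out of $H^0(\osh{\PP^2}(8))$ is surjective, which holds for $k=a+b+c+d\le4$ prescribed singularities of these orders on an octic. This gives $\dim\widetilde W=44-(9a+10b+8c+9d)$ and hence the expected dimension $36-9a-10b-8c-9d$ for every nonempty stratum; realisability of each profile with $a+b+c+d\le3$ (and of $\Nfrak_{1^32}$, $\Nfrak_{2^4}$) I would confirm by exhibiting explicit octics, with Macaulay2 where convenient.

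For the decomposition into components I would invoke the birational classification of the minimal resolution. A fixed singularity profile fixes $k$, hence $\chi(\osh{Y})=4-k$ by Lemma~\ref{lemma:chi-of-resolution} and the total degree $d_{\mathrm{sum}}=a+b+2c+2d$; Theorem~\ref{theorem:FPR} and Proposition~\ref{prop:normalisation-kodaira-neg-infinite} then leave only finitely many possible Kodaira dimensions and minimal models for $Y$. Each of these, together with the admissible mutual positions of the singular points enumerated by the plane-curve analysis of Appendix~\ref{section:curves}, defines an irreducible locally closed piece of $\widetilde W$ of the full expected dimension; two different pieces are distinguished by a discrete invariant (the birational type of $Y$, or a closed incidence condition generically absent on the others), so they are mutually exclusive and pairwise disjoint. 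This simultaneously yields equidimensionality and disjointness of components and accounts for the primed strata in Figure~\ref{fig:degenerations-simple-elliptic}.

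It remains to prove the emptiness statement, which is where the real work lies. For $a+b+c+d=4$ one has $\chi(\osh{Y})=0$, so by Proposition~\ref{prop:normalisation-kodaira-neg-infinite} either $\kappa(Y)=-\infty$ with $Y$ ruled of genus $1$ and all four singularities simply elliptic, or $\kappa(Y)\ge0$; in the latter case only case~\ref{theorem:FPR:iteme} of Theorem~\ref{theorem:FPR} is numerically compatible, forcing $a+b=4$, $c=d=0$ and $\chi(\osh{\minmod{Y}})=0$, i.e.\ $\minmod{Y}$ abelian or bielliptic. That branch I would exclude outright: $Y$ admits a degree-two morphism to $\PP^2$, so $\CC(\PP^2)\subset\CC(Y)=\CC(\minmod{Y})$ is finite and $\minmod{Y}$ would be unirational, hence rational by Castelnuovo --- impossible for an abelian or bielliptic surface. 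Thus $b=d=0$ and $(a,c)\in\{(4,0),(3,1),(2,2),(1,3),(0,4)\}$, and the theorem asserts that only $(3,1)$ and $(0,4)$ survive. The emptiness of $(4,0)$, $(2,2)$, $(1,3)$ is genuine non-existence --- note their expected dimensions $0,2,3$ are non-negative --- so it cannot be read off any dimension count; here I would rely on the configuration analysis of Appendix~\ref{section:curves}, showing that a reduced half-log-canonical octic cannot simultaneously carry four non-degenerate $[3;3]$-points, nor the mixed packages $1^22^2$ and $12^3$ of $[3;3]$- and ordinary quadruple points. I expect this last step --- ruling out specific configurations of four irrational singularities on a reduced octic --- to be the principal obstacle, proved by Bézout-type incidence arguments among the tangent lines and auxiliary curves through the singular points, backed by Macaulay2 verification.
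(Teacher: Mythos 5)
Your skeleton coincides with the paper's: pass to reduced half-log-canonical octics via the geometric quotient, use Proposition~\ref{prop:normalisation-kodaira-neg-infinite} and Theorem~\ref{theorem:FPR} to constrain the profiles with four irrational singularities, and kill $\Nfrak_{1^4}$, $\Nfrak_{1122}$, $\Nfrak_{12^3}$ by the incidence analysis of Propositions~\ref{prop:max-no-of-33-pts} and~\ref{prop:N1122-and-N1222}. However, one step is outright false. A degree-two morphism $Y\to\PP^2$ gives a finite field extension $\CC(\PP^2)\subset\CC(Y)$, which is the \emph{opposite} of what unirationality requires ($\CC(Y)$ must embed into a purely transcendental extension of $\CC$); if your argument were valid it would prove that every smooth octic double plane, a minimal surface of general type, is rational. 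The abelian/bielliptic branch has to be excluded differently, e.g.\ by Proposition~\ref{prop:max-no-of-33-pts}, which rules out four $[3;3]$-points (degenerate or not) on a reduced octic; note that this is also needed to dispose of the cuspidal profiles $1^a\olone^{4-a}$ with $4-a>0$, which your (broken) reduction to $b=0$ would otherwise leave open.

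The second gap is in the dimension count. Your jet codimensions are correct for one singularity at a general point, and independence of the conditions at points in general position gives the expected dimension of the \emph{general} component of each stratum. But the theorem asserts equidimensionality, i.e.\ that the special-configuration components (quadruple-point on the distinguished tangent line, tangents aligned along a conic, the two rigid configurations of $\Nfrak_{1^32}$, \dots) also attain the expected dimension. On exactly those loci the jet conditions are \emph{not} independent: in the paper's own computation for $\Nfrak_{12}$ (Appendix~\ref{section:M2}) the rank of the condition matrix drops when the distinguished tangent passes through the quadruple-point, and the linear system of admissible octics jumps up by just enough to offset the codimension of the incidence condition. This compensation, together with the disjointness of the special component from the closure of the general one (detected there by the multiplicity with which the tangent line occurs in the limiting octics), is what the stratum-by-stratum Macaulay2 verification establishes; no general-position argument yields it. Finally, your proposed discrete invariant for separating components fails: the birational type of the resolution does not distinguish $\Nfrak_{112}'$ from $\Nfrak_{112}''$ (both rational), nor $\Nfrak_{122}'$ from $\Nfrak_{122}''$, nor $\Nfrak_{1^32}'$ from $\Nfrak_{1^32}''$, so disjointness must come from the explicit curve-theoretic computations rather than from Table~\ref{table:birational-classification-by-minimal-resolution}.
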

\begin{proof}
It follows from Proposition~\ref{prop:normalisation-kodaira-neg-infinite} and Theorem~\ref{theorem:FPR} that $a+b+c+d\leq 4$ and if $a+b+c+d = 4$, then $b = d = 0$.
Moreover, Proposition~\ref{prop:N1122-and-N1222} shows that $\Nfrak_{1122}$ and $\Nfrak_{12^3}$ are empty, as is $\Nfrak_{1^4}$, by Proposition~\ref{prop:max-no-of-33-pts}.
It remains to show that all other strata are equidimensional, with all components pair-wise disjoint and of expected dimension.
Since we have translated the problem into plane curve geometry, we can systematically use the computer algebra system Macaulay2 \cite{M2} to check which strata are inhabited (by producing elements explicitly), find all irreducible components and compute their dimension.

The scripts can be obtained from \cite{Anthes:2018} and the explanations about how they work are the content of \ref{section:M2}.

Which stratum is dealt with where is listed in \ref{table:where-is-which-stratum}.
\end{proof}

\begin{table}\centering
\begin{tabular}{llcll}
\toprule
\multicolumn{2}{c}{Script reference} && \multicolumn{2}{c}{Script reference} \\
Strata/Components & Section  && Strata/Components & Section \\
\cmidrule{1-2}\cmidrule{4-5}
\multicolumn{2}{c}{\cite[parameterFreeCases.m2]{Anthes:2018}} && \multicolumn{2}{c}{\cite[degenerate122.m2]{Anthes:2018}}\\
$\Nfrak_{2},\Nfrak_{22},\Nfrak_{2^3},\Nfrak_{2^4}$ & I && $\Nfrak_{12\oltwo}',\Nfrak_{\olone22}'$ & I.1\\
$\Nfrak_{1},\Nfrak_{11}$ & II && $\Nfrak_{1\oltwo\oltwo}',\Nfrak_{\olone2\oltwo}'$ & I.2\\
$\Nfrak_{12},\Nfrak_{122},\Nfrak_{112}$\hyperlink{manual-reference}{(*)} & III && $\Nfrak_{\olone\oltwo\oltwo}'$ & I.3\\
\multicolumn{2}{c}{\cite[upToTwoSingularities.m2]{Anthes:2018}} && $\Nfrak''_{12\oltwo},\Nfrak_{\olone22}''$ & II.1\\
$\Nfrak_{\olone},\Nfrak_{1\olone},\Nfrak_{\olone\olone}$ & I && $\Nfrak_{1\oltwo\oltwo}'',\Nfrak''_{\olone2\oltwo}$ & II.2\\
$\Nfrak_{\oltwo},\Nfrak_{2\oltwo},\Nfrak_{\oltwo\oltwo}$ & II && $\Nfrak_{\olone\oltwo\oltwo}''$ & II.3\\
$\Nfrak_{12},\Nfrak_{1\oltwo},\Nfrak_{\olone2},\Nfrak_{\olone\oltwo}$ & III &&\multicolumn{2}{c}{\cite[degenerate112.m2]{Anthes:2018}}\\
\multicolumn{2}{c}{\cite[threeNonDeg.m2]{Anthes:2018}} &&  $\Nfrak'_{11\oltwo},\Nfrak'_{1\olone2}$ & I.1\\
$\Nfrak_{122}$ & I && $\Nfrak'_{1\olone\oltwo},\Nfrak'_{\olone\olone2}$ & I.2\\
$\Nfrak_{112}$ & II && $\Nfrak'_{\olone\olone\oltwo}$ & I.3\\
$\Nfrak_{1^3}$ & III && $\Nfrak''_{11\oltwo},\Nfrak''_{1\olone2}$ & II.1\\
\multicolumn{2}{c}{\cite[1112.m2]{Anthes:2018}} && $\Nfrak''_{1\olone\oltwo},\Nfrak''_{\olone\olone2}$ & II.2\\
$\Nfrak_{1^32}$ & I, II && $\Nfrak''_{\olone\olone\oltwo}$ & II.3\\
\multicolumn{2}{c}{\cite[degenerate222.m2]{Anthes:2018}} && $\Nfrak'''_{11\oltwo},\Nfrak'''_{1\olone2}$ & III.1\\
$\Nfrak_{22\oltwo},\Nfrak_{2\oltwo\oltwo},\Nfrak_{\oltwo^3}$ & --- && $\Nfrak'''_{1\olone\oltwo},\Nfrak'''_{\olone\olone2}$ & III.2\\
\multicolumn{2}{c}{\cite[degenerate111.m2]{Anthes:2018}} && $\Nfrak'''_{\olone\olone\oltwo}$ & III.3\\
$\Nfrak_{11\olone}',\Nfrak_{1\olone\olone}',\Nfrak_{\olone^3}'$ & I.1--I.3 && \multicolumn{2}{c}{\cite[1111.m2]{Anthes:2018}}\\
$\Nfrak_{11\olone}'',\Nfrak_{1\olone\olone}'',\Nfrak_{\olone^3}''$ & II.1--II.3 && $\Nfrak_{1^4} = \emptyset$ & ---\\
\midrule
\multicolumn{5}{p{28em}}{{\footnotesize (\hypertarget{manual-reference}{*}) In \cite[parameterFreeCases.m2~III.1~\&~III.2]{Anthes:2018}, only the dimensions are computed; the rest about $\Nfrak_{12}$ is in \cite[upToTwoSingularities.m2~III.1]{Anthes:2018} and for $\Nfrak_{122}$ and $\Nfrak_{112}$ see \cite[threeNonDeg.m2~I~\&~II]{Anthes:2018}.}}\\
\bottomrule
\end{tabular}
\caption{The catalogue of scripts and strata}\label{table:where-is-which-stratum} 
\end{table}

\begin{remark}
If a stratum $\YYY{a}{b}{c}{d}$ decomposes into the union of multiple components, we mostly use the following ad-hoc notation: we decorate the components with primes, i.e., $$\YYY{a}{b}{c}{d} = \YYY{a}{b}{c}{d}'\cup \YYY{a}{b}{c}{d}''\left(\cup\,  \YYY{a}{b}{c}{d}'''\right).$$
There are four exceptions, $\Nfrak_{1\olone2}$, $\Nfrak_{1\olone\oltwo}$, $\Nfrak_{12\oltwo}$ and $\Nfrak_{\olone2\oltwo}$, where a refined notation is explained (and used only) in the Macaulay2-code.
The choices for the order are somewhat arbitrary.
As a rule of thumb, more primes indicate that the configuration of singularities of the branch curve is more special.
We give a few examples; see Definition~\ref{def:n-n-sings} for the notions:

If $X\in \Nfrak_{12} = \Nfrak_{12}'\cup\Nfrak_{12}''$, then the branch curve of the canonical double cover $X\to\PP^{2}$ has exactly one (non-degenerate) $[3;3]$-point and one (ordinary) quadruple-point and up to automorphisms, there are two possibilities.
Namely, either the distinguished tangent line of the $[3;3]$-point misses the quadruple-point ($X\in \Nfrak_{12}'$), or it passes through it ($X\in \Nfrak_{12}''$).
That this indeed splits $\Nfrak_{12}$ into two components is non-trivial but follows from the Macaulay2-code \cite[upToTwoSingularities.m2~III.1]{Anthes:2018}.
Likewise, $\Nfrak_{122}$ has two components, but no more since there can be only one quadruple-point on the distinguished tangent line of the $[3;3]$-point (Lemma~\ref{lemma:quadruple-on-dist-tangent}).
However, $\Nfrak_{12\oltwo}$ and $\Nfrak_{\olone2\oltwo}$ have three components since either the degenerate, or the non-degenerate quadruple-point lies on the distinguished tangent line of the $[3;3]$-point.


In a different flavour, $\Nfrak_{111} = \Nfrak_{111}'\cup\Nfrak_{111}''$ where the $[3;3]$-points of the branch curve of $X\in \Nfrak_{111}''$ are with tangents along a conic.

The decomposition $\Nfrak_{1^32}=\Nfrak_{1^32}'\cup\Nfrak_{1^32}''$ comes from the two cases described in Proposition~\ref{prop:N1112}.
\end{remark}

\begin{remark}
In total, $\Nfrak$ is covered by two strata with four components, six strata with three components, $13$ strata with two components and $16$ irreducible strata.
Hence, the number of inhabited strata of the irrationality stratification on $\Nfrak$ is $37$ and there are $68$ pair-wise disjoint components.
The degeneration diagram for the components of all strata parametrising surfaces with simply elliptic singularities is shown in \ref{fig:degenerations-simple-elliptic}.
The complete degeneration diagram showing all strata of $\Nfrak$ would be incomprehensibly complicated.
\end{remark}

\begin{theorem}\label{thm:birational-classification-by-minimal-resolution}
\ref{table:birational-classification-by-minimal-resolution} lists the components of the strata of the irrationality stratification and the birational isomorphism type of their members.
\end{theorem}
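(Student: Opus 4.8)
The plan is to reduce the birational classification to the two numerical invariants governing the results of Franciosi, Pardini and Rollenske, and then to settle the finitely many genuinely ambiguous cases explicitly, with the help of Macaulay2. All surfaces in question are normal, since the strata $\YYY{a}{b}{c}{d}$ lie in $\Nfrak$. For such an $X$, let $f\colon Y\to X$ be its minimal resolution; the canonical (rational) singularities affect neither the birational type nor $\chi(\osh{Y})$, so only the irrational ones matter. The relevant invariants are the number $k=a+b+c+d$ of irrational singularities and the sum $\delta=a+b+2c+2d$ of their degrees, a type $1$ or $\olone$ point contributing $1$ and a type $2$ or $\oltwo$ point contributing $2$, in accordance with the dictionary of \ref{table:surface-vs-curve-singularities}. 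By Lemma~\ref{lemma:chi-of-resolution} we have at once $\chi(\osh{Y})=4-k$.

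First I would run through the possible pairs $(k,\delta)$ and invoke Proposition~\ref{prop:normalisation-kodaira-neg-infinite} and Theorem~\ref{theorem:FPR}. If $\kappa(Y)=-\infty$, then Proposition~\ref{prop:normalisation-kodaira-neg-infinite} forces either $k=3$, so that $Y$ is rational, or $k=4$, so that $\minmod{Y}$ is ruled over an elliptic curve and all four singularities are simple (that is $b=d=0$). If instead $\kappa(Y)\geq 0$, then Theorem~\ref{theorem:FPR} determines the contraction $\sigma\colon Y\to\minmod{Y}$ and the minimal model from $\delta$, its five cases corresponding to $\delta=0,1,2,3,4$ (so in particular $\delta\leq 4$ whenever $\kappa(Y)\geq 0$). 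Combined with $\chi(\osh{Y})=4-k$, this already fixes the minimal model in many strata: for instance $\delta=3$ with $\kappa(Y)=0$ and $\chi(\osh{Y})=2$ forces $\minmod{Y}$ to be a K3 surface, whereas $\kappa(Y)=1$ gives a properly elliptic surface; and $\delta=0$ is the open locus $\Nfrak_{\emptyset}$ of canonical models of surfaces of general type.

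The remaining work, and the main obstacle, is to decide between the alternatives the numerics leave open -- above all $\kappa(Y)=-\infty$ versus $\kappa(Y)\geq 0$ when $k=3$ or $k=4$, and $\kappa(Y)=0$ versus $\kappa(Y)=1$ when $\delta=3$ -- and to verify that the answer is constant on each irreducible component. For this I would pass to the branch octic $B\subset\PP^2$ of the canonical double-cover and perform the canonical resolution along an embedded resolution $\tilde W\to\PP^2$ of the singularities of $B$: the minimal resolution $Y$ is then computed from the double-cover of $\tilde W$ branched over the even part $\tilde B$ of the transformed branch divisor, whose canonical bundle is $\phi^{*}\bigl(K_{\tilde W}+\tfrac{1}{2}\tilde B\bigr)$. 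From this formula one reads off $\kappa(Y)$ and the minimal model for each local configuration of branch singularities listed in \ref{table:surface-vs-curve-singularities}.

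Finally, by Theorem~\ref{thm:strata-dimensions} each stratum breaks into finitely many irreducible components, distinguished by the special positions of these singularities (for instance whether a quadruple-point lies on the distinguished tangent line of a $[3;3]$-point). It therefore suffices to carry out the resolution computation on one explicit representative per component and to check that the birational type does not jump in the family; producing these representatives and certifying the invariants is exactly what the Macaulay2 scripts of \cite{Anthes:2018} do, so that the entries of \ref{table:birational-classification-by-minimal-resolution} follow component by component. I expect the genuine difficulty to be concentrated in the mixed-degree strata and in those with $\chi(\osh{Y})\leq 1$, where the numerics alone do not separate rational, Enriques, ruled and properly elliptic minimal models, and where the explicit resolution of the branch curve is indispensable.
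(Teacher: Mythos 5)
Your overall skeleton agrees with the paper's: compute $\chi(\osh{Y})=4-k$ from Lemma~\ref{lemma:chi-of-resolution}, feed the degree data into Theorem~\ref{theorem:FPR} and Proposition~\ref{prop:normalisation-kodaira-neg-infinite}, and observe that the Enriques--Kodaira classification then settles every stratum except $\Nfrak_{12}$, $\Nfrak_{1^3}$, $\Nfrak_{112}$ and their cuspidal variants. You also correctly locate where the real difficulty sits. The gap is that you do not actually resolve those ambiguous cases, and the mechanism you propose for doing so would not suffice as stated. The paper decides them by explicit, uniform computations of bicanonical sections: for $\Nfrak_{12}$ it identifies $H^{0}(Y,2K_{Y})$ with the space of plane conics having a double point at the quadruple-point and passing through the $[3;3]$-point in its distinguished tangent direction, so that $P_{2}(Y)=2$ (properly elliptic) exactly when the line joining the two singular points is the distinguished tangent line, and $P_{2}(Y)=1$ (K3) otherwise; for $\Nfrak_{1^3}$ it applies Castelnuovo's criterion, which requires first establishing $\q(Y)=0$ (a step your proposal omits entirely, along with the computation of $\p_{g}(Y)$ and $\q(Y)$ needed for the table entries); and for $\Nfrak_{112}'$ and $\Nfrak_{112}''$ it uses a Cremona transformation centred at the three singular points to reduce to the already-settled rational strata $\Nfrak_{222}''$ and $\Nfrak_{122}''$. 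None of this is delegated to Macaulay2 --- the scripts of \cite{Anthes:2018} certify dimensions and irreducible components for Theorem~\ref{thm:strata-dimensions}, not birational types, so your appeal to them here is a misattribution.

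Moreover, ``one explicit representative per component plus a check that the type does not jump'' is exactly the kind of argument that fails silently here: plurigenera are only semicontinuous, so a computation at one member does not propagate to the whole component without an argument, and Remark~\ref{rmk:error} records that the naive heuristic (Kodaira dimension drops as the branch curve specialises) gives the \emph{wrong} answer for $\Nfrak_{12}'$ versus $\Nfrak_{12}''$ --- the more special configuration is the properly elliptic one. The paper avoids this trap because its description of $H^{0}(2K_{Y})$ as an adjoint system of conics is intrinsic to the configuration defining each component and hence manifestly constant on it. To close your argument you would need to carry out the canonical-resolution computation you sketch in a way that depends only on the component, not on a chosen representative, and to supply the $\q(Y)=0$ input for the rationality criterion.
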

\begin{proof}
Let $X$ be a normal Gorenstein stable surface with $K_{X}^2 = 2$ and $\chi(\osh{X}) = 4$, let $f\colon Y\to X$ be the minimal resolution and let $\sigma\colon Y\to \minmod{Y}$ be a minimal model.
By Lemma~\ref{lemma:chi-of-resolution}, $\chi(\osh{\minmod{Y}}) = \chi(\osh{Y}) = 4-k$, where $k$ is the number of elliptic singularities of $X$.
Furthermore, Theorem~\ref{theorem:FPR} and Proposition~\ref{prop:normalisation-kodaira-neg-infinite} constrain the possible Kodaira dimensions $\kappa(\minmod{Y}) = \kappa(Y)$ in such a way that from the Enriques--Kodaira classification of algebraic surfaces (cf.\ Barth, Hulek, Peters, Van de Ven \cite[VI Theorem~1.1]{BHPV}), the claim follows for $\Nfrak_{\emptyset}$, $\Nfrak_{1}$, $\Nfrak_{2}$, $\Nfrak_{11}$, $\Nfrak_{22}$, $\Nfrak_{122}$, $\Nfrak_{222}$, $\Nfrak_{1^32}$ and $\Nfrak_{2^4}$ and the cuspidal versions $\Nfrak_{\olone}$, $\Nfrak_{\oltwo}$ etc.
In other words, the only strata which need extra care are $\Nfrak_{12}$, $\Nfrak_{1^3}$ and $\Nfrak_{112}$ and their versions with cusps.

Before we deal with these cases, we introduce some more notation.
Since the rational singularities of $X$ admit a crepant resolution, $K_{Y} = f^{*}K_{X}-E$, where $E\subset Y$ is the sum of the exceptional curves $E_{i}\subset Y$ over the elliptic singularities $p_{i}\in X$, $i = 1,\dots,k$.
Likewise, let $G\subset Y$ be the divisor such that $K_{Y} = \sigma^{*}K_{\minmod{Y}}+G$.

If $X$ has two elliptic singularities, one of degree one and one of degree two, then $\sigma$ is a single blow-up in a smooth point and either $\kappa(Y) = 0$ or $\kappa(Y) = 1$, again by Theorem~\ref{theorem:FPR}.
Since $\chi(\osh{\minmod{Y}}) = 2$ and by the classification of algebraic surfaces, either $\minmod{Y}$ is a K3 surface or properly elliptic.
To distinguish the two cases geometrically, we compute $H^{0}(Y,2K_{Y})$:

Let $\varphi\colon X\to \PP^{2}$ be the canonical double-cover.
Its branch curve $B\subset\PP^2$ has a $[3;3]$-point $p$ and quadruple-point $q$ and no more non-simple singularities.
Let $\tau\colon Z\to\PP^2$ be the sequence of blow-ups at $q$, at $p$ and at the point over $p$ corresponding to the distinguished tangent direction of the $[3;3]$-point, yielding exceptional curves $F_{1}$, $F_{2}$ and $F_{3}$, respectively.
We have
\begin{align*}
K_{Z}&=\tau^{*}K_{\PP^2}+F_{1}+F_{2}+2F_{3} \text{ and}\\
\tau^{*}B &= \tau_{*}^{-1}B+4F_{1}+3F_{2}+6F_{3}.
\end{align*}
Therefore, the double-cover $\psi\colon Y'\to Z$ branched over $\tau_{*}^{-1}B+F_{2}$ induces a partial resolution $Y'\to X$.
The surface $Y'$ has at most simple singularities and up to a (crepant) resolution of those, which will not affect the pluri-canonical sections, $Y$ is obtained from $Y'$ by contracting the $(-1)$-curve sitting above $F_{2}$.
Hence,
\begin{align*}
H^0(Y,2K_{Y})&\cong H^0(Y',2K_{Y'})\\
&\cong H^0(Z,2K_{Z}+\tau_{*}^{-1}B+F_{2})\\
&= H^0(Z,\tau^{*}(2K_{\PP^{2}}+B)-2F_{1}-F_{2}-F_{3}),
\end{align*}
meaning that the sections of $2K_{Y}$ correspond to the plane conics with a double-point at $q$ and passing through $p$ in the distinguished tangent direction of the $[3;3]$-point.
If the line $L\subset \PP^{2}$ joining $p$ and $q$ happens to be the distinguished tangent line of the $[3;3]$-point, then there is exactly a pencil of such conics, namely, the conics of the form $L+L'$ where $L'$ is any line through $q$; thus, $H^0(Y,2K_{Y}) = 2$ and $\minmod{Y}$ is properly elliptic.
Otherwise, i.e., if $L$ is not the distinguished tangent line, then $2L$ is the only conic with a double-point at $q$ and containing the distinguished tangent of the $[3;3]$-point at $p$; that is, $H^0(Y,2K_{Y}) = 1$.
To see that in this case $\minmod{Y}$ is a K3-surface, observe that $2K_{Y} = (\varphi\circ f)^{*}2L$ is twice the $(-1)$-curve $G$, the contraction of which yields $\minmod{Y}$.
In conclusion, this gives the claimed result for the stratum $\Nfrak_{12}$ and its cuspidal versions.

For $X\in \Nfrak_{1^3}$, $\chi(\osh{Y}) = 1$ and we have two possibilities, namely, either $\minmod{Y}$ is Enriques (Theorem~\ref{theorem:FPR}) or rational (Proposition~\ref{prop:normalisation-kodaira-neg-infinite}).
In any case, $\q(Y) = 0$, as we will show below.
Therefore, Castelnuovo's Rationality Criterion implies that $Y$ is rational if and only if
$P_{2}(Y) = 0$.
The branch curve $B\subset\PP^{2}$ of the canonical double cover has three $[3;3]$-points and from Corollary~\ref{cor:constraints-33-pts}, it follows that the they are not collinear and that none of them lies on a distinguished tangent line of another $[3;3]$-point of $B$.
Thus, they either align along a smooth conic, which has to be contained in the octic then, or they do not.
But the sections of $\omega_{Y}^2$ correspond exactly to the conics passing through all three points in distinguished tangent directions, so that $P_{2}(Y)\not=0$ if and only if $X\in\Nfrak_{1^3}''$.
In other words, if $X\in\Nfrak_{1^3}''$, then $\minmod{Y}$ is an Enriques surface and if $X\in\Nfrak_{1^3}'$, then $Y$ is rational.
The same argument applies to the cuspidal versions $\Nfrak_{11\olone}$, $\Nfrak_{1\olone\olone}$ and $\Nfrak_{\olone^3}$.

The last case we have to consider is that $X$ has two elliptic singularities of degree one and one of degree two.
Since $X$ has exactly three elliptic singularities, $\chi(\osh{Y}) = 1$ and from Theorem~\ref{theorem:FPR} and Proposition~\ref{prop:normalisation-kodaira-neg-infinite} we conclude that either $\minmod{Y}$ is an Enriques surface and $\sigma\colon Y\to \minmod{Y}$ is a blow up in two points (possibly infinitely close), or $Y$ is rational.
If $X\in \Nfrak_{112}'''$ or any of the cuspidal versions, i.e., if the branch curve of the canonical double-cover $X\to\PP^{2}$ contains the two distinguished tangents, which meet in the quadruple-point, then the union of those lines defines a trivialisation of $2K_{\minmod{Y}}$; more precisely, the corresponding section of $\osh{\PP^{2}}(2)$ lifts to a section of $2K_{X}-2E$ with vanishing locus twice the disjoint union of two disjoint $(-1)$-curves, which constitute the exceptional locus of $\sigma\colon Y\to \minmod{Y}$.
If $X$ is a member of either $\Nfrak_{112}'$ or $\Nfrak_{112}''$, however, this does not work and $X$ is rational.
An alternative way to see this is as follows.
Let $B\subset\PP^{2}$ be the branch curve of the canonical double-cover.
Let $p_{1},p_{2}\in B$ be the $[3;3]$-points and let $p_{3}\in\PP^{2}$ be the quadruple-point.
Consider the Cremona-transformation $\varphi\colon\PP^{2}\dashrightarrow\PP^{2}$ with centres $p_{1},p_{2},p_{3}$.
Let $B'\subset \PP^{2}$ be the reduced curve supported on the pull-back $(\varphi^{-1})^{*}B$, but neglecting the components with even multiplicity.
Then the double-covers $X$ and $X'$ branched over $B$ and $B'$, respectively, are birational.
In fact, the double cover branched over $B'$ is the normalisation of the double-cover branched over $(\varphi^{-1})^{*}B$, which is birational to $X$.
If $X\in \Nfrak_{112}'$, then $X'\in\Nfrak_{222}''$, which is rational and if $X\in\Nfrak_{112}''$, then $X'\in\Nfrak_{122}''$, which is rational as well.
Furthermore, as the $[3;3]$- or quadruple-points of $X$ degenerate, those of $X'$ degenerate as well, but again this does not affect the birational isomorphism type.

Finally, we compute $\p_{g}(Y)$ and $\q(Y)$.
If $X$ has a single elliptic singularity, then the canonical linear system $|K_{Y}|$ is one-dimensional, corresponding to the pencil of lines through the non-simple singularity of the branch curve.
Hence, $\p_{g}(Y) = 2$ and $\q(Y) = 0$.
If $X$ has two elliptic singularities, then $|K_{Y}|$ is a single point, corresponding to the line joining the two non-simple singularities of the branch curve.
If $X$ has at least three elliptic singularities, then there is no line through all the corresponding singularities of the branch curve, by Lemma~\ref{lemma:degree-bounds}.
Thus, $\p_{g}(Y) = 0$.
Since $4-\chi(\osh{Y})$ is the number of elliptic singularities, this implies $\q(Y) = 0$ if $X$ has at most three, and $\q(Y)=1$ if $X$ has four elliptic singularities.
\end{proof}

\begin{remark}\label{rmk:error}
Theorem~\ref{thm:birational-classification-by-minimal-resolution} above fixes a mistake in the author's thesis \cite{Anthes:thesis} affecting the analogue to \ref{table:birational-classification-by-minimal-resolution} and the corresponding proof. Therefore, it should be stressed that the minimal resolutions of the members of $\Nfrak'_{12}$ are indeed surfaces of K3-type and those $\Nfrak_{12}''$ are indeed properly elliptic, contrary to the heuristic that the Kodaira dimension should decrease as the branch curve becomes more special.
\end{remark}

\begin{table}\centering
\begin{tabular}{ll}
\toprule
Components & Minimal model of the resolution $\minmod{Y}$ \\ 
\midrule
$\Nfrak_{\emptyset}$ & General type, $K_{\minmod Y}^2 = 2$, $\chi(\osh{\minmod{Y}}) = 4$\\
$\Nfrak_{1},\Nfrak_{\olone}$ & General type, $K_{\minmod{Y}}^2 = 1$, $\chi(\osh{\minmod{Y}}) = 3$\\
$\Nfrak_{2},\Nfrak_{\oltwo}$ & Properly elliptic, $\chi(\osh{\minmod{Y}}) = 3$, $\p_{g}(\minmod{Y})=2$\\
$\Nfrak_{11},\Nfrak_{1\olone},\Nfrak_{\olone\olone}$ & Properly elliptic, $\chi(\osh{\minmod{Y}}) = 2$, $\p_{g}(\minmod{Y})=1$ \\
$\Nfrak_{12}',\Nfrak_{1\oltwo}',\Nfrak_{\olone2}',\Nfrak_{\olone\oltwo}'$ & K3\\
$\Nfrak_{12}'',\Nfrak_{1\oltwo}'',\Nfrak_{\olone2}'',\Nfrak_{\olone\oltwo}''$ & Properly elliptic, $\chi(\osh{\minmod{Y}}) = 2$, $\p_{g}(\minmod{Y})=1$\\
$\Nfrak_{22},\Nfrak_{2\oltwo},\Nfrak_{\oltwo\oltwo}$ & K3\\
$\Nfrak_{1^3}',\Nfrak_{11\olone}',\Nfrak_{1\olone\olone}',\Nfrak_{\olone^3}'$ & Rational\\
$\Nfrak_{1^3}'',\Nfrak_{11\olone}'',\Nfrak_{1\olone\olone}'',\Nfrak_{\olone^3}''$ & Enriques\\
$\Nfrak_{112}',\Nfrak_{11\oltwo}',\dots,\Nfrak_{\olone\olone\oltwo}'$ & Rational\\
$\Nfrak_{112}'',\Nfrak_{11\oltwo}'',\dots,\Nfrak_{\olone\olone\oltwo}''$ & Rational\\
$\Nfrak_{112}''',\Nfrak_{11\oltwo}''',\dots,\Nfrak_{\olone\olone\oltwo}'''$ & Enriques\\
$\Nfrak_{122}',\Nfrak_{12\oltwo}',\dots,\Nfrak_{\olone\oltwo\oltwo}'$ & Rational\\
$\Nfrak_{122}'',\Nfrak_{12\oltwo}'',\dots,\Nfrak_{\olone\oltwo\oltwo}''$ & Rational\\
$\Nfrak_{2^3}$ & Rational\\
$\Nfrak_{1^32}',\Nfrak_{1^32}''$ & Ruled of genus 1\\
$\Nfrak_{2^4}$ & Ruled of genus $1$\\
\bottomrule
\end{tabular}
\caption{The birational types of the normalisations.}\label{table:birational-classification-by-minimal-resolution}
\end{table}

The Hodge type $\lozenge_{r,s}$ of $X\in\Nfrak$ roughly behaves as follows:
As we introduce a simply elliptic singularity, $s$ increases by one and as a simply elliptic singularity degenerates to a cusp, $s$ decreases by one and $r$ increases by one.
In fact, this only fails in the case where it is numerically impossible since there are four elliptic singularities.
In this case, the $(1,0)$-classes become linearly dependent. 

\begin{proposition}\label{prop:hodge-diamond-classification}
The Hodge type is constant on every stratum of the irrationality stratification of $\Nfrak$ and they are given as in \ref{fig:Hodge type-diagram}.
\end{proposition}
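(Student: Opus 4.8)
The plan is to read the Hodge type $\lozenge_{r,s}$ of $X\in\Nfrak$ off the Mayer--Vietoris sequence set up just before Lemma~\ref{lemma:MHS-computation}, expressing both $r$ and $s$ in terms of the discrete data $(a,b,c,d)$ defining the stratum. The integer $r=\dim(H^2(X;\CC))^{(0,0)}$ is already settled by Lemma~\ref{lemma:MHS-computation}: it equals the number of cuspidal elliptic singularities, that is $r=b+d$, which is manifestly constant on each stratum. The whole content is therefore the computation of $s=\dim(H^2(X;\CC))^{(1,0)}$.

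First I would make precise the cokernel description of the weight-one part. Let $f\colon Y\to X$ be the resolution at the irrational points, with exceptional curves $E_1,\dots,E_n$ over the $n=a+b+c+d$ elliptic singularities. Since morphisms of mixed Hodge structures are strict and $H^2(Y)$ carries no class of weight $1$ — the rational singularities of $Y$ do not affect the $(p,0)$-parts, so for these we may treat $Y$ as smooth and $H^2(Y)$ as pure of weight $2$, whence $(H^2(Y))^{(1,0)}=0$ — the exact sequence
\[
H^1(Y;\CC)\xrightarrow{\ \alpha\ }\bigoplus_{i=1}^nH^1(E_i;\CC)\longrightarrow H^2(X;\CC)\longrightarrow H^2(Y;\CC)
\]
restricts to an identification $(H^2(X;\CC))^{(1,0)}\cong\coker\bigl(\alpha^{(1,0)}\bigr)$, where $\alpha^{(1,0)}\colon(H^1(Y))^{(1,0)}\to\bigoplus_i(H^1(E_i))^{(1,0)}$. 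Each simply elliptic $E_i$ is a smooth elliptic curve contributing a one-dimensional $(1,0)$-piece, while each cuspidal $E_i$ is a cycle of rational curves whose $H^1$ is concentrated in weight $0$ and contributes nothing; thus the target has dimension $a+c$, the number of simply elliptic singularities, and $\dim(H^1(Y))^{(1,0)}=\q(Y)$.

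Second I would feed in the irregularity computation from the end of the proof of Theorem~\ref{thm:birational-classification-by-minimal-resolution}, namely $\q(Y)=0$ when $X$ has at most three elliptic singularities and $\q(Y)=1$ when it has four. If $n\leq 3$, the source of $\alpha^{(1,0)}$ vanishes and so $s=a+c$. If $n=4$, then Proposition~\ref{prop:normalisation-kodaira-neg-infinite} together with Theorem~\ref{theorem:FPR} forces $b=d=0$, hence $a+c=4$ and $r=0$; here
\[
s=\dim\coker\bigl(\alpha^{(1,0)}\bigr)=4-\rk\bigl(\alpha^{(1,0)}\bigr)\geq 4-\q(Y)=3,
\]
while Lemma~\ref{lemma:MHS-computation} gives $s\leq 3-r=3$, so $s=3$ (and incidentally $\alpha^{(1,0)}$ is injective). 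In both regimes $s$ depends only on $(a,b,c,d)$, so together with $r=b+d$ the Hodge type is constant on every stratum, and substituting the values of $(a,b,c,d)$ for each inhabited stratum reproduces the diamonds displayed in \ref{fig:Hodge type-diagram}.

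The genuinely delicate points are the justification that $(H^2(X;\CC))^{(1,0)}$ is \emph{exactly} the cokernel of $\alpha^{(1,0)}$ — i.e.\ the strictness and weight bookkeeping for the Mayer--Vietoris sequence, plus the vanishing $(H^2(Y))^{(1,0)}=0$ — and the treatment of the two four-singularity strata $\Nfrak_{1^32}$ and $\Nfrak_{2^4}$. In that last case a direct rank computation for the restriction $H^1(Y)\to\bigoplus_iH^1(E_i)$ on a genus-one ruled surface would be awkward, so I expect the main obstacle to be resisting the temptation to compute it: the clean route is the squeeze above, which concludes $s=3$ purely from the a~priori upper bound $s\leq 3-r$ of Lemma~\ref{lemma:MHS-computation} and the lower bound coming from $\q(Y)=1$.
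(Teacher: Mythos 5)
Your proof is correct and follows the paper's argument almost step for step: the same Mayer--Vietoris sequence, the same identification of $s$ with the cokernel of the map on $(1,0)$-parts, and the same input $\q(Y)=0$ for at most three elliptic singularities versus $\q(Y)=1$ in the ruled case, imported from the proof of \ref{thm:birational-classification-by-minimal-resolution}. The one place you genuinely diverge is the four-singularity strata $\Nfrak_{1^32}$ and $\Nfrak_{2^4}$. There the paper proves outright that $H^{1,0}(Y)\to\bigoplus_iH^1(E_i;\CC)$ is injective: the exceptional elliptic curves are multi-sections of the genus-one ruling $Y\to C$, the pull-back $H^1(C;\CC)\to H^1(E_i;\CC)$ is multiplication by the degree and hence injective, and it factors through $H^1(Y;\CC)$, giving $s=k-1=3$. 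You instead squeeze: $s=4-\rk(\alpha^{(1,0)})\geq 4-\q(Y)=3$ from below and $s\leq 3-r=3$ from \ref{lemma:MHS-computation} from above. This is valid and spares you the geometric argument, but be aware of what it leans on: the upper bound of \ref{lemma:MHS-computation} does not come from the Mayer--Vietoris sequence (which only yields $s\leq a+c=4$ here) but from the general diamond constraint $r+s\leq 3$, i.e.\ ultimately from $h^2(\osh{X})=\p_g(X)=3$ together with the Du Bois/cohomological-insignificance discussion of Section~\ref{subsection:hodge}. The paper's route stays entirely inside the Mayer--Vietoris computation and delivers the injectivity of $\alpha^{(1,0)}$ as an explicit by-product, which in your version is only recovered a posteriori. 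Either way the conclusion $\lozenge_{0,3}$ for these two strata, and the table of \ref{fig:Hodge type-diagram}, come out the same.
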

\begin{proof}
Recall from Lemma~\ref{lemma:MHS-computation} that if $X$ has exactly $r$ cusps, then it is of Hodge type $\lozenge_{r,s}$ for some $0\leq s\leq 3-r$.
It remains to compute $s$ in each possible case.
Recall the set-up in which we proved the lemma.
We let $Y\to X$ be the resolution of the elliptic singularities, with exceptional arithmetically elliptic curves $E_{i}$, $i = 1,\dots,n$, and considered the Mayer--Vietoris exact sequence
$$H^1(Y)\to \bigoplus_{i=1}^nH^1(E_{i})\to H^2(X)\to H^2(Y).$$
In this set-up, renumbering if necessary, we can suppose that the curves $E_{1},\dots,E_{k}$ are smooth elliptic and that the remaining ones, $E_{k+1},\dots,E_{n}$, are cycles of rational curves.
Then the induced exact sequence of $(1,0)$-parts becomes:
$$H^{1,0}(Y)\to \bigoplus_{i=1}^kH^1(E_{i})^{1,0}\to (H^2(X))^{1,0}\to 0.$$
Thus, $s = \dim (H^2(X))^{1,0} = k-\dim\im(H^{1,0}(Y)\to \bigoplus_{i=1}^kH^1(E_{i}))$.
In particular, if $k = 0$, then $s = 0$.
In what follows, we assume $k\geq 1$.

Our claims only concern the dimensions in degree $(0,0)$, $(1,0)$ and $(2,0)$;
for this reason and since the remaining singularities of $Y$ are rational, we can assume without loss of generality that $Y$ is minimal.

It follows from the proof of Theorem~\ref{thm:birational-classification-by-minimal-resolution} that $\q(Y) = 0$, unless $X$ is ruled of genus $1$.
Clearly, if $\q(Y) = 0$, then $s = k$.

If $Y$ is ruled of genus $1$, then $\q(Y) = 1$ and the curves $E_{i}$, $i\leq k$, are multi-sections of the ruling $Y\to C$.
On the one hand, the pull-back morphism $H^1(C)\to H^1(E_{i})$ is multiplication with the degree, hence injective.
On the other hand, it factors through $H^1(Y)\to H^1(E_{i})$, which is injective as well then.
Therefore, $s = k-1$.
Note that the strata where $Y$ is ruled are those with $n = k = 4$ and $r = 0$.
In conclusion, the members of $\Nfrak_{1^32}$ or $\Nfrak_{2^4}$ have Hodge type $\lozenge_{0,3}$.
\end{proof}

\begin{figure}\centering
\begin{tikzpicture}[every node/.style={node distance = 2.5cm}]
 \node[state] (00) 
 {\begin{tabular}{c}
  {$\lozenge_{0,0}$}\\\hline
  {$\Mfrak_{2,4}$}
 \end{tabular}};
 
 \node[state, right of=00] (01)
 {\begin{tabular}{c}
  {$\lozenge_{0,1}$}\\\hline
  {$\Nfrak_{1},\Nfrak_{2}$}
 \end{tabular}
 };

 \node[state,right of=01] (02)
 {\begin{tabular}{c}
  {$\lozenge_{0,2}$}\\\hline
    {$\begin{matrix}
  	\Nfrak_{11},\Nfrak_{12},\\
  	\Nfrak_{22}\\
  	\end{matrix}$}
 \end{tabular}
 };
  
  \node[state,right of=02] (03)
  {\begin{tabular}{c}
   {$\lozenge_{0,3}$}\\\hline
   {$\begin{matrix}
   \Nfrak_{111},\Nfrak_{112},\\
   \Nfrak_{1^32},\Nfrak_{122},\\\Nfrak_{2^3},\Nfrak_{2^4}\\
   \end{matrix}$}
  \end{tabular}
  };
 
 \node[state,below of=01] (10)
 {\begin{tabular}{c}
  {$\lozenge_{1,0}$}\\\hline
  {$\Nfrak_{\olone},\Nfrak_{\oltwo}$}
 \end{tabular}
 };

 \node[state,below of=02] (11)
 {\begin{tabular}{c}
  {$\lozenge_{1,1}$}\\\hline
  {$\begin{matrix}
    \Nfrak_{1\olone},\Nfrak_{\olone2},\\
    \Nfrak_{1\oltwo},\Nfrak_{2\oltwo}\\
    \end{matrix}$}
 \end{tabular}
 };

 \node[state, below of=03] (12)
 {\begin{tabular}{c}
  {$\lozenge_{1,2}$}\\\hline
  {$\begin{matrix}
    \Nfrak_{11\olone},\Nfrak_{1\olone2},\\
    \Nfrak_{11\oltwo},\Nfrak_{\olone22},\\
    \Nfrak_{12\oltwo},\Nfrak_{22\oltwo}\\
    \end{matrix}$}
 \end{tabular}
 };

 \node[state, below of=11] (20)
 {\begin{tabular}{c}
  {$\lozenge_{2,0}$}\\\hline
  {$\begin{matrix}
    \Nfrak_{\olone\olone},\Nfrak_{\olone\oltwo},\\
    \Nfrak_{\oltwo\oltwo}\\
    \end{matrix}$}
 \end{tabular}
 };

 \node[state,below of=12] (21)
 {\begin{tabular}{c}
  {$\lozenge_{2,1}$}\\\hline
  {$\begin{matrix}
    \Nfrak_{1\olone\olone},\Nfrak_{1\olone\oltwo},\\
    \Nfrak_{1\oltwo\oltwo},\Nfrak_{\olone\olone2},\\
    \Nfrak_{2\oltwo\oltwo}\\
    \end{matrix}$}
 \end{tabular}
 };

 \node[state,below of=21] (30)
 {\begin{tabular}{c}
  {$\lozenge_{3,0}$}\\\hline
  {$\begin{matrix}
    \Nfrak_{\olone^3},\Nfrak_{\olone\olone\oltwo}\\
    \Nfrak_{\olone\oltwo\oltwo},\Nfrak_{\oltwo^3}\\
    \end{matrix}$}
 \end{tabular}
 }; 

 \path (00) edge (01);
 \path (01) edge (10)
 			edge (02);
 \path (10) edge (11);
 \path (02) edge (11)
  			edge (03);
 \path (03) edge (12);
 \path (11) edge (20)
 			edge (12);
 \path (20) edge (21);
 \path (12) edge (21);
 \path (21) edge (30); 
\end{tikzpicture}
\caption{Degeneration diagram for Hodge types, cf. Proposition~\ref{prop:hodge-diamond-classification}.}\label{fig:Hodge type-diagram}
\end{figure}
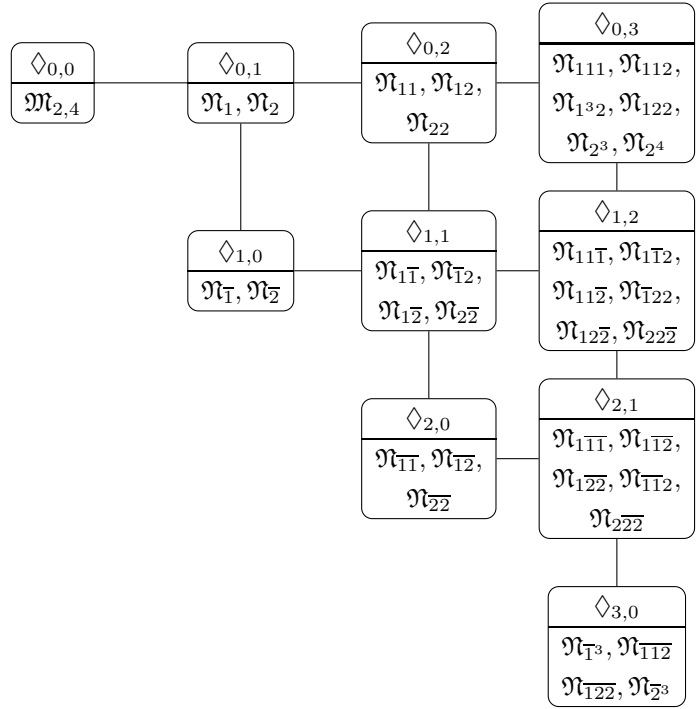

\subsection{The loci of non-normal surfaces}
We define a stratification of $\Mfrak^{(n)}$ analogously to the irrationality stratification of $\Nfrak$:
\begin{definition}
Let $1\leq n\leq 4$.
Given non-negative integers $a,b,c,d\geq 0$, we let $\Mfrak_{n;1^a\olone^{b}2^c\oltwo^d}\subset\Mfrak^{(n)}$ be the locus parametrising surfaces $X\in\Mfrak^{(n)}$ with exactly $a$ simply elliptic and exactly $b$ cuspidal singularities of degree $1$ and exactly $c$ simply elliptic and exactly $d$ cuspidal singularities of degree $2$.
We apply the analogous abbreviation-conventions as before.
\end{definition}

The arguments used to prove Proposition~\ref{prop:normality-stratification} and Proposition~\ref{prop:irrationality-stratification} also show:
\begin{proposition}
The strata $\Mfrak_{n;1^a\olone^{b}2^c\oltwo^d}\subset\Mfrak^{(n)}$ are locally closed and the closure of one stratum is contained in a union of strata.
\end{proposition}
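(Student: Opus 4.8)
The plan is to transport the statement to the space of plane octics, exactly as in the proofs of Propositions~\ref{prop:normality-stratification} and~\ref{prop:irrationality-stratification}. Let $U\subset|\osh{\PP^2}(8)|$ be the open locus of half-log-canonical octics, with classifying morphism $f\colon U\to\XXX$. By Corollary~\ref{cor:classifying-map-submersive} this morphism is a geometric quotient for the $\PGL(3,\CC)$-action, so it carries $\PGL(3,\CC)$-invariant locally closed sets to locally closed sets; hence it suffices to prove that $f^{-1}(\Mfrak_{n;1^a\olone^{b}2^c\oltwo^d})$ is $\PGL(3,\CC)$-invariant and locally closed in $U$. Invariance is immediate, since a projective transformation preserves the decomposition $B=B'+2B''$ (in particular $\deg B''$) and the analytic type of every singular point of $B$.

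First I would fix the degree of non-normality. By Proposition~\ref{prop:normality-stratification} the pre-image of $\Mfrak^{(n)}$ is the locally closed set $U_{n}\setminus U_{n+1}$, so it is enough to work inside it and exhibit each stratum as cut out by a further locally closed condition. By Proposition~\ref{prop:description-of-normalisation} together with the dictionary in~\ref{table:surface-vs-curve-singularities}, over $\Mfrak^{(n)}$ the isolated elliptic singularities of $X$ are in bijection with the non-simple singularities of the branch curve $B$ of the corresponding analytic types: simply elliptic (resp.\ cuspidal) singularities of degree $1$ come from $J_{10}$- (resp.\ $J_{2,\bullet}$-) points, and those of degree $2$ from $X_{9}$- (resp.\ $X_{\bullet}$- and $Y_{\bullet,\bullet}$-) points. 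Thus $f^{-1}(\Mfrak_{n;1^a\olone^{b}2^c\oltwo^d})$ is precisely the locus of octics in $U_{n}\setminus U_{n+1}$ carrying exactly $a,b,c,d$ singular points of the respective types.

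That this locus is locally closed is, as in Proposition~\ref{prop:irrationality-stratification}, a matter of elementary plane curve geometry: by semicontinuity of the Milnor number the condition that $B$ acquire a singularity of a prescribed minimal type at a variable point is closed, and the finitely many refinements needed to separate the types in~\ref{table:singularity-classification} (simply elliptic versus cuspidal, and the various degenerate quadruple-point series) are expressible through incidence and tangency conditions among the components of $B$; intersecting these closed conditions and deleting the appropriate closed boundary produces a locally closed stratum. I expect the main obstacle to be the bookkeeping of how the isolated elliptic singularities interact with the non-reduced conductor component $B''$ — one must ensure that the counting conditions stay locally closed as an elliptic point collides with, or migrates onto, the double locus $B''$ under degeneration — but this is governed by the same finite list (\ref{table:singularity-classification}) as in the normal case, and the plane-curve analysis already carried out for Proposition~\ref{prop:irrationality-stratification} is insensitive to whether the ambient octic is reduced.

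Finally, the assertion about closures is formal. The strata $\Mfrak_{n;1^a\olone^{b}2^c\oltwo^d}$ are pairwise disjoint, finite in number (the indices are bounded, since a plane octic carries only finitely many non-simple singular points), and cover $\Mfrak^{(n)}$; since each is locally closed, the relative closure of any one of them in $\Mfrak^{(n)}$ is a closed set covered by the remaining strata, which is exactly the asserted containment in a union of strata.
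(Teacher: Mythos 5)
Your proposal is correct and follows essentially the same route as the paper, which proves this proposition simply by observing that the arguments for Proposition~\ref{prop:normality-stratification} and Proposition~\ref{prop:irrationality-stratification} carry over; you have merely spelled out that transfer (geometric quotient, $\PGL(3,\CC)$-invariance, the singularity dictionary, and the formal closure argument) in more detail than the paper does.
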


Unlike the irrationality stratification of the locus of normal surfaces, this stratification is not finer than the Hodge type stratification.
Namely, singularities of type $J_{2,\infty}$, $X_{\infty}$ or $Y_{r,\infty}$ strongly affect the Hodge structure in a way that is hard to control.
This is why we concentrate on isolated irrational singularities in the irrationality stratification of the locus of non-normal surfaces, even though it its insufficient for reading off the Hodge type.
The Examples~\ref{example:wild-Hodge type-on-non-normal1} and \ref{example:wild-Hodge type-on-non-normal2} below illustrate this.

We proceed by investigating which strata are inhabited.
Afterwards, we compute their dimensions and the birational types of their members.

{The stratum $\Mfrak^{(4)}$:}
The members of $\Mfrak^{(4)}$ are the double-covers of $\PP^2$ branched over the double-quartics with at worst nodes.
Thus, there is only one inhabited stratum $\Mfrak_{4;\emptyset} = \Mfrak^{(4)}$.
It is isomorphic to the moduli space of nodal plane quartics (cf. Hassett \cite{Hassett:1999}) which is rational (as shown by Katsylo \cite{Katsylo:1992}).

{The stratum $\Mfrak^{(3)}$:}
The members of $\Mfrak^{(3)}$ are double-covers of $\PP^2$ branched over a reduced conic and a double-cubic with at worst nodes.
Since a reduced conic has at worst a node, the members of $\Mfrak^{(3)}$ do not have isolated irrational singularities.
Hence, yet again, only $\Mfrak_{3;\emptyset} = \Mfrak^{(3)}$ is inhabited.

{The stratum $\Mfrak^{(2)}$:}
Since a member $X\in \Mfrak^{(2)}$ is a double-cover of $\PP^2$ branched over $B = B'+2B''$, where $B'$ is a reduced quartic and $B''$ is a reduced conic, the isolated elliptic singularities come from the non-simple singularities of the quartic, of which only one is possible, namely, an ordinary quadruple-point, arising only as the union of four concurrent lines by Hui's classification \cite{Hui:1979}.
Therefore, $\Mfrak^{(2)} = \Mfrak_{2;\emptyset}\cup\Mfrak_{2;2}$.

{The stratum $\Mfrak^{(1)}$:}
A member of $\Mfrak^{(1)}$ has a branch divisor of the form $B'+2B''$ where $B''$ is a line and $B'$ is a reduced sextic.
By Proposition~\ref{prop:sextic-singularities}, the only possible non-simple singularities $B'$ might have are either a (possibly degenerate) quadruple-point, or a (possibly degenerate) $[3;3]$-point, or two non-degenerate $[3;3]$-points (with distinct distinguished tangent lines).
Furthermore, in the last case, $B'$ is the union of three conics meeting in the two $[3;3]$-points.
Thus, the inhabited strata of $\Mfrak^{(1)}$ are $\Mfrak_{1;\emptyset}$, $\Mfrak_{1;1}$, $\Mfrak_{1;2}$, $\Mfrak_{1;\olone}$, $\Mfrak_{1;\oltwo}$ and $\Mfrak_{1;11}$.

\begin{table}\centering
\begin{tabular}{lcl}
\toprule
Strata & Dimension & Birational type of normalisation \\ 
\midrule
$\Mfrak^{(4)} = \Mfrak_{4;\emptyset}$ & 6 & $\PP^{2}\amalg \PP^{2}$\\
$\Mfrak^{(3)} = \Mfrak_{3;\emptyset}$ & 6 & Rational\\
$\Mfrak_{2;\emptyset}$ & 11 & Weak del Pezzo of degree 2\\
$\Mfrak_{2;2}$ & 3 & Ruled of genus 1\\
$\Mfrak_{1;\emptyset}$ & 21 & K3-Surface\\
$\Mfrak_{1;1}$, $\Mfrak_{1;\olone}$ & 12,11 & Rational\\
$\Mfrak_{1;2}$, $\Mfrak_{1;\oltwo}$ & 13,12 & Rational\\
$\Mfrak_{1;11}$ & 3 & Ruled of genus 1\\
\bottomrule
\end{tabular}
\caption{The strata for non-normal surfaces}\label{table:birational-classification-by-minimal-resolution-non-normal}
\end{table}
\begin{proposition}\label{prop:dim-non-normal-strata}
	All the strata $\Mfrak_{1;\emptyset}$, $\Mfrak_{1;1}$, $\Mfrak_{1;\olone}$, $\Mfrak_{1;2}$, $\Mfrak_{1;\oltwo}$, $\Mfrak_{1;11}$, $\Mfrak_{2;\emptyset}$, $\Mfrak_{2;2}$, $\Mfrak_{3;\emptyset}$, $\Mfrak_{4;\emptyset}$ are irreducible and of dimension as indicated in \ref{table:birational-classification-by-minimal-resolution-non-normal}.
\end{proposition}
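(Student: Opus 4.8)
The plan is to transport the entire problem to the parameter space $U\subset|\osh{\PP^2}(8)|$ of half-log-canonical plane octics, exactly as in the proofs of Proposition~\ref{prop:normality-stratification} and Proposition~\ref{prop:irrationality-stratification}. By Corollary~\ref{cor:classifying-map-submersive} the classifying morphism $U\to\XXX$ is a geometric quotient by $\PGL(3,\CC)$, so it carries irreducible $\PGL(3,\CC)$-invariant locally closed sets to irreducible locally closed sets, and the dimension of the image is governed by the stabilisers. Thus it suffices, for each stratum, to (i) exhibit its preimage $W\subset U$ as a dense open subset of an explicit irreducible parameter variety built from the decomposition $B=B'+2B''$ of Proposition~\ref{prop:description-of-normalisation} together with the admissible singularity types; (ii) compute $\dim W$; and (iii) verify that $\PGL(3,\CC)$ acts with finite stabiliser at a general point of $W$, so that the stratum is irreducible of dimension $\dim W-8$.

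First I would treat the strata carrying no isolated irrational singularity, where the parameter variety is simply a product of linear systems. Here $W$ is a dense open subset of $|\osh{\PP^2}(4)|$ for $\Mfrak_{4;\emptyset}$ (the double quartic $B=2B''$), of $|\osh{\PP^2}(2)|\times|\osh{\PP^2}(3)|$ for $\Mfrak_{3;\emptyset}$, of $|\osh{\PP^2}(4)|\times|\osh{\PP^2}(2)|$ for $\Mfrak_{2;\emptyset}$, and of $|\osh{\PP^2}(6)|\times|\osh{\PP^2}(1)|$ for $\Mfrak_{1;\emptyset}$; these are visibly irreducible of dimensions $14,14,19,29$, giving $6,6,11,21$ after subtracting $8$.

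Next I would handle the strata with an isolated irrational singularity, describing $W$ as an incidence variety fibred over the choice of the singular point (and, where relevant, an infinitely near point), the fibre being the space of sextics with the prescribed multiplicity sequence on $B'$; this is again a tower of projective bundles, hence irreducible, and its dimension follows from the standard count $\sum_i\binom{m_i+1}{2}$ of conditions imposed by an ordinary chain of infinitely near multiplicities $m_i$. Concretely, an ordinary quadruple point imposes $\binom{5}{2}=10$ conditions, so sextics with one such point form an irreducible $27-10+2=19$-dimensional family, whence $\dim\Mfrak_{1;2}=19+2-8=13$; a non-degenerate $[3;3]$-point has multiplicity sequence $(3,3)$ along one free infinitely near point, imposing $6+6=12$ conditions against $2+1=3$ moduli of centre and direction, giving $27-12+3=18$ and $\dim\Mfrak_{1;1}=18+2-8=12$. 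The cuspidal strata $\Mfrak_{1;\olone}$ and $\Mfrak_{1;\oltwo}$ are the codimension-one boundaries inside these families where the relevant tangent cone acquires a repeated factor (the degeneracy conditions $4\lambda^3+27=0$, respectively $\lambda^2=4$, of \ref{table:singularity-classification}), so their dimensions drop by one to $11$ and $12$. The two rigid strata are controlled by their intrinsic geometry: for $\Mfrak_{2;2}$ the quartic $B'$ is forced to be four concurrent lines (Hui \cite{Hui:1979}), parametrised by a point and an unordered $4$-tuple of lines through it, i.e.\ $2+4=6$ parameters, and with the conic $B''$ this gives $6+5=11$, hence dimension $3$; for $\Mfrak_{1;11}$ the sextic $B'$ is a union of three conics through the two $[3;3]$-points (Proposition~\ref{prop:sextic-singularities}), parametrised by the two points, the two tangent lines and three members of the resulting pencil of doubly-tangent conics, i.e.\ $2+2+1+1+3=9$ parameters, whence $9+2=11$ and again dimension $3$.

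The hard part will be the third ingredient: verifying that $\PGL(3,\CC)$ acts with finite stabiliser at a general point of each $W$, so that the naive count $\dim W-8$ is exact and no component collapses. For the strata whose general member sits in a genuinely $8$-dimensional orbit this is a genericity statement that I would settle by exhibiting one curve with no continuous projective automorphism preserving its distinguished data. The delicate cases are precisely the rigid strata $\Mfrak_{2;2}$ and $\Mfrak_{1;11}$: fixing the singular point and its four tangent directions leaves a positive-dimensional subgroup (the transformations fixing the point and acting trivially on tangent directions), and one must confirm that imposing invariance of the extra curve $B''$ cuts this subgroup down to a finite group. Granting finite stabilisers, irreducibility of each stratum follows because it is the continuous image of the irreducible variety $W$, and its dimension equals $\dim W-8$, matching the values in \ref{table:birational-classification-by-minimal-resolution-non-normal}.
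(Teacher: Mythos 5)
Your overall strategy is the same as the paper's --- pass to the parameter space $U$ of half-log-canonical octics, use Corollary~\ref{cor:classifying-map-submersive}, and analyse each stratum through the decomposition $B=B'+2B''$ --- and for the four strata $\Mfrak_{n;\emptyset}$ your argument (open dense subsets of products of linear systems, dimension $\dim|\osh{\PP^{2}}(8-2n)|+\dim|\osh{\PP^{2}}(n)|-8$) is literally the paper's. Where you diverge is in the strata with an irrational singularity: the paper fixes the singular configuration by a projective automorphism, computes the dimension of the resulting linear system of sextics (resp.\ quartics) \emph{explicitly in Macaulay2}, and then subtracts the dimension of the stabiliser of the fixed configuration (e.g.\ $17-4=13$ for $\Mfrak_{1;2}$, $15-3=12$ for $\Mfrak_{1;1}$), whereas you set up a global incidence variety and use the expected-dimension count $\sum_i\binom{m_i+1}{2}$ for the conditions imposed by the (infinitely near) fat points, then subtract the full $8=\dim\PGL(3,\CC)$. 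The two bookkeepings give the same numbers throughout, and your counts for the rigid strata $\Mfrak_{2;2}$ and $\Mfrak_{1;11}$ agree with the paper's (which treats $\Mfrak_{2;2}$ by exhibiting an explicit $3$-dimensional linear system that is a finite cover of the stratum). What your version buys is transparency and independence from the computer; what the paper's version buys is certainty on exactly the two points you leave open.

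Those two points are genuine obligations, not formalities. First, the expected-dimension count is only an inequality until you prove the conditions are independent; this is classical for a single fat point of multiplicity $4$ on sextics, but for the $[3;3]$-point (two infinitely near triple points) it is precisely what the scripts \cite[sextics.m2]{Anthes:2018} certify, and you should either cite a superabundance criterion or produce one explicit sextic realising the count. Second, you correctly flag but do not carry out the finite-stabiliser verification; note that the paper sidesteps the issue for the non-rigid strata by quotienting only by the stabiliser of the fixed configuration (whose dimension is computed exactly), and for $\Mfrak_{2;2}$ by the finite-cover argument, so some substitute is needed --- for instance, observing that the $3$-dimensional stabiliser of four concurrent lines acts with $3$-dimensional orbit on a general conic. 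Related to both: you must also check that the general member of each parameter variety lies in the stratum (non-degenerate singularity, no extra non-simple singularities, intersection multiplicities with $B''$ at most $2$); openness of these conditions is as in Proposition~\ref{prop:normality-stratification}, but non-emptiness still requires exhibiting one example per stratum.
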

\begin{proof}
We first show that $\Mfrak^{(n)}$ is irreducible for all $n = 1,\dots,4$.
As in Theorem~\ref{theorem:stack-isomorphism}, we denote by $U\subset |\osh{\PP^{2}}(8)|$ the locus of half-log-canonical plane octics.
The pre-image $U_{n}\subset |\osh{\PP^{2}}(8-2n)|\times |\osh{\PP^{2}}(n)|$ of $U$ under the closed embedding $|\osh{\PP^{2}}(8-2n)|\times |\osh{\PP^{2}}(n)|\to |\osh{\PP^{2}}(8)|$, $(B',B'')\mapsto B'+2B''$ is open, hence smooth and irreducible.
By construction, the composition $U_{n}\hookrightarrow U\to U/\PGL(3,\CC)\cong \XXX$ identifies $U_{n}/\PGL(3,\CC)$ with $\Mfrak^{(n)}$.
Since $U_{n}$ is irreducible, so is $\Mfrak^{(n)}$, as claimed.
In addition, this proves $$\dim\Mfrak^{(n)} = \dim |\osh{\PP^{2}}(8-2n)| + \dim |\osh{\PP^{2}}(n)| - 8,$$
which gives $\dim \Mfrak^{(1)} = 21$, $\dim\Mfrak^{(2)} = 11$ and $\dim\Mfrak^{(3)} = \dim\Mfrak^{(4)} = 6$.
(The stabiliser of a general plane curve of degree $\geq 3$ is discrete and in the cases under consideration we have either $n\geq 3$ or $8-2n\geq 4$.)
Since $\Mfrak_{n;\emptyset}$ is open in $\Mfrak^{(n)}$ we get the claimed results for these strata.

In the remaining cases, we argue similarly.
Let $\Mfrak\subset\Mfrak^{(n)}$ be any of the strata.
We let $V\subset U_{n}\subset|\osh{\PP^{2}}(8-2n)|\times |\osh{\PP^{2}}(n)|$ be the pre-image of $\Mfrak$ under the restricted classifying map $U_{n}\to\Mfrak^{(n)}$.
Then $V$ dominates $\Mfrak$, so that it would be enough to show that $V$ is irreducible.
However, it will be customary to restrict to certain sub-spaces in order to gain more control.

For the strata $\Mfrak\subset\Mfrak^{(1)}$, where $B''$ is a line, we can fix this line; then the condition for $B'$ along $B''$ is that their local intersection multiplicities are at most $2$ everywhere.
That this is an open condition follows as in the proof of Proposition~\ref{prop:normality-stratification}.

The easiest case is $\Mfrak = \Mfrak_{1;2}$.
For every member $B'+2B''\in V$, where $p\in \PP^{2}$ is the quadruple-point of $B'$, there is a plane automorphism mapping $p$ to the point $(0;0;1)$ and the line $B''$ to the line at infinity $L = \{z = 0\}$, where we are using homogeneous coordinates $(x;y;z)\in\PP^{2}$.
(We could have used any pair of a point and a line missing the point, of course.)
The linear system of sextics with multiplicity at least $4$ in $(0;0;1)$ is of dimension $17$ \cite[sextics.m2~I.1]{Anthes:2018}.
Let $V'\subset|\osh{\PP^{2}}(6)|$ be the locus of sextics $B'$ such that $B'+2L$ is a member of $V$.
Then the $\PGL(3,\CC)$-orbit of $V'+2L\subset V$ is all of $V$ and so $V'/\PGL(3,\CC)\cong \Mfrak$.
Note that a sextic $B'$ with a quadruple-point at $(0;0;1)$ lies in $V'$ if and only if it is reduced, the quadruple-point is non-degenerate, all remaining singularities are simple and every intersection with $B''$ has multiplicity at most $2$.
All these conditions are open in the linear system of sextics with multiplicity $\geq4$ in $(0;0;1)$.
Therefore, $V'$ is irreducible and, hence, so is $V'/\PGL(3,\CC)\cong \Mfrak_{1;2}$.
Since the group of automorphisms fixing a point and a line missing the point is of dimension $4$, we conclude that $\Mfrak_{1;2}$ is irreducible and of dimension $17-4=13$.

The stratum $\Mfrak_{1;\oltwo}$ is handled similarly; the difference is that we also have to fix the special tangent direction, which we can still do using automorphisms.
This way, we get an open sub-set of a linear sub-space of dimension $15$ \cite[sextics.m2~I.2]{Anthes:2018}, with stabiliser of dimension $3$, so that this stratum is irreducible of dimension $12$.

Let us turn to $\Mfrak_{1;1}$, where we argue similarly.
Again, we can fix the singular point, which we want to be a $[3;3]$-point, so we should also fix the distinguished tangent line and we still have automorphisms left to fix the line $B''$.
The linear systems of sextics with at least a $[3;3]$-point in a fixed point and with fixed special tangent direction is of dimension $15$ \cite[sextics.m2~II.1]{Anthes:2018}.
By Lemma~\ref{lemma:degree-bounds}~\ref{lemma:degree-bounds:item:n-and-m-fold}, the only other singularities a reduced sextic can have besides a $[3;3]$-point are at most triple-points and they have to be off the distinguished tangent line.
Furthermore, if a reduced sextic has a $[3;3]$-point, then it has at most one other non-simple singularity, which is another $[3;3]$-point; a closed condition.
Since the stabiliser is of dimension $3$, again we conclude that the stratum under consideration is irreducible and $12$-dimensional.

The same argument shows that $\Mfrak_{1;\olone}$ is irreducible and of dimension $14-3 = 11$ since the sub-space of $|\osh{\PP^{2}}(6)|$ parametrising sextics with a degenerate $[3;3]$-point at a fixed point and a fixed tangent (but variable second order direction) is irreducible and of dimension $14$ as computed in \cite[sextics.m2~II.2]{Anthes:2018}.

We argue a little differently for $\Mfrak_{1;11}$.
Note that after fixing the locus of $[3;3]$-points with their distinguished tangent directions, the locus of admissible lines $B''$ is independent of the sextic, for the sextic is a union of three distinct conics passing through the points in distinguished tangent direction, it meets lines with multiplicity $3$ only in the $[3;3]$-points and so the double-line may be any line missing those two points.
Since the corresponding space of sextics is irreducible and one-dimensional \cite[sextics.m2~II.3]{Anthes:2018}, $\Mfrak_{1;11}$ is irreducible and of dimension $3$.

The last case we have to work out is $\Mfrak_{2;2}$.
The inverse image $V\subset U_{2}$ of $\Mfrak_{2;2}$ consists of the octics decomposing as $B'+2B''$ with a reduced quartic $B'$ and a reduced conic $B''$, where $B'$ has a quadruple-point, hence, is a union of four concurrent lines, and $B''$ has at worst nodes.
Furthermore, the nodes of $B''$ have to be off $B'$.
Since there is a $1$-parameter family of analytically distinct quadruple-points, we can neither fix the quartic, nor the conic, which could be smooth or a union of two lines.
However, we can consider the linear system in $|\osh{\PP^{2}}(4)|\times|\osh{\PP^{2}}(2)|$ given by pairs $(B',B'')$ where $B'$ is a quartic with a quadruple-point in $(1;1;1)$ and $B''$ is a conic in the pencil $\{\lambda xy+\mu z^2 = 0\}_{(\lambda;\mu)\in\PP^{1}}$.
In addition, we ask that the quartic contains the lines $\{x=z\}$ and $\{y = z\}$.
Since for any $B = B'+2B''\in V$, at most two lines in $B'$ can be tangent to $B''$, we find for at least two of the lines in $B'$ a transversal intersection point with $B''$.
Therefore, $B$ is projectively equivalent to a member of this $3$-dimensional linear system, up to finitely many choices of parameters.
The only exceptional parameters are either $(\lambda;\mu) = (0;1)$, or those where the quadruple-point is degenerate, or where $(\lambda;\mu) = (1;0)$ and where the quartic passes through the point $(0;0;1)$.
These conditions are clearly closed, so that we have an open, irreducible sub-scheme which is a finite cover of $\Mfrak_{2;2}$.
\end{proof}

Finally, we discuss the birational geometry of the non-normal surfaces:

\begin{proposition}\label{prop:birational-models-non-normal}
The minimal models of the minimal resolution of the possible non-normal Gorenstein stable surfaces $X$ satisfying $K_{X}^2 = 2$ and $\chi(\osh{X}) = 4$ are as listed in \ref{table:birational-classification-by-minimal-resolution-non-normal} above.
\end{proposition}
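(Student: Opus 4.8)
The plan is to read off each normalisation directly as a double-cover of the plane and then place it in the Enriques--Kodaira classification using just two invariants: its dualising sheaf and its holomorphic Euler characteristic. Write $n=\deg B''\in\{1,2,3,4\}$ for the stratum $\Mfrak^{(n)}$ and set $m=4-n$, so that the reduced part $B'$ of the branch octic has degree $2m$. By Proposition~\ref{prop:description-of-normalisation} the composition $\overline\phi=\phi\circ\pi\colon\overline X\to\PP^2$ of the canonical map with the normalisation is precisely the double-cover branched over $B'$. The standard structure of a double-cover gives $\overline\phi_{*}\osh{\overline X}=\osh{\PP^2}\oplus\osh{\PP^2}(-m)$, hence $\omega_{\overline X}=\overline\phi^{*}\osh{\PP^2}(m-3)=\overline\phi^{*}\osh{\PP^2}(1-n)$ and $\chi(\osh{\overline X})=1+\chi(\osh{\PP^2}(-m))$, which equals $2$ for $n=1$ and $1$ for $n=2,3$. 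Since $(\PP^2,\tfrac12 B')$ is log-canonical (it is dominated by the log-canonical pair $(\PP^2,\tfrac12 B)$), the surface $\overline X$ is semi-log-canonical by Alexeev and Pardini \cite[Lemma~2.3]{AP:2012}, and being normal it is log-canonical; in particular Lemma~\ref{lemma:chi-of-resolution} applies to it.

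The cases without isolated irrational singularities are immediate. For $n=4$ we have $B'=0$, so $\overline\phi$ is \'etale; as $\PP^2$ is simply connected, the only such double-cover is the split one, giving $\overline X=\PP^2\amalg\PP^2$. For $\Mfrak_{1;\emptyset}$, $\Mfrak_{2;\emptyset}$ and $\Mfrak_{3;\emptyset}$ the curve $B'$ has at worst simple singularities, so $\overline X$ has at worst canonical singularities and the minimal resolution $f\colon Y\to\overline X$ is crepant, $K_Y=f^{*}\omega_{\overline X}=(\overline\phi\circ f)^{*}\osh{\PP^2}(1-n)$. For $n=1$ this makes $K_Y$ trivial with $\chi(\osh Y)=2$, so $Y$ is a K3 surface; for $n=2$ the divisor $-K_Y$ is nef and big with $K_Y^2=2$, so $Y$ is a weak del Pezzo surface of degree $2$; for $n=3$ the divisor $-K_Y$ is again nef and big, so $Y$ is a (weak) del Pezzo surface and in particular rational.

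For the remaining strata there are $k:=a+b+c+d\geq 1$ elliptic singularities. Exactly as in the proof of Theorem~\ref{thm:birational-classification-by-minimal-resolution}, the du Val points are resolved crepantly while each elliptic singularity contributes its reduced exceptional cycle, so the minimal resolution satisfies $K_Y=f^{*}K_{\overline X}-E$ with $E$ a \emph{nonzero} effective divisor supported on the elliptic cycles. When $n=1$ we have $K_{\overline X}=0$, so $-K_Y=E$ is effective and nonzero; when $n=2$ (the stratum $\Mfrak_{2;2}$) we have $-K_{\overline X}$ ample, so $-K_Y=f^{*}(-K_{\overline X})+E$ is big. In either case $mK_Y$ is never effective for $m\geq1$, whence $\kappa(Y)=-\infty$ and $\p_{g}(Y)=0$. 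Lemma~\ref{lemma:chi-of-resolution} gives $\chi(\osh Y)=\chi(\osh{\overline X})-k$, so that $\q(Y)=1-\chi(\osh Y)$. For $\Mfrak_{1;1}$, $\Mfrak_{1;\olone}$, $\Mfrak_{1;2}$ and $\Mfrak_{1;\oltwo}$ we obtain $\chi(\osh Y)=1$ and $\q(Y)=0$, so $Y$ is rational by Castelnuovo's criterion; for $\Mfrak_{1;11}$ and $\Mfrak_{2;2}$ we obtain $\chi(\osh Y)=0$ and $\q(Y)=1$, so $Y$ is ruled over a curve of genus $1$. This exhausts \ref{table:birational-classification-by-minimal-resolution-non-normal}.

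The only genuinely delicate point is the identity $K_Y=f^{*}K_{\overline X}-E$ with $E$ reduced along the elliptic cycles: it rests on the Gorenstein elliptic singularities being minimally elliptic with discrepancy $-1$ along the fundamental cycle, which is the same input already used in Theorem~\ref{thm:birational-classification-by-minimal-resolution} and follows from the singularity classification of Proposition~\ref{prop:possible-curve-singularities}. Everything else is bookkeeping with the two invariants $\omega_{\overline X}$ and $\chi(\osh{\overline X})$ produced by the double-cover formulas, together with the count $k$ of elliptic singularities prescribed by each stratum. I expect no serious obstacle beyond confirming that each listed stratum really carries the stated number and type of non-simple branch singularities, which has already been settled in the preceding dimension computations.
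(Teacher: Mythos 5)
Your proposal is correct and follows essentially the same route as the paper: identify the normalisation as the double cover of $\PP^2$ branched over the reduced part $B'$, read off $\omega_{\overline X}$ and $\chi(\osh{\overline X})$ from the double-cover formulas, apply Lemma~\ref{lemma:chi-of-resolution} and the Enriques--Kodaira classification. The only (harmless) divergence is in the ruled cases $\Mfrak_{2;2}$ and $\Mfrak_{1;11}$, where the paper exhibits the elliptic ruling explicitly via the pencil of lines through the quadruple-point, while you deduce it purely numerically from $\kappa(Y)=-\infty$, $\chi(\osh Y)=0$ and $\q(Y)=1$.
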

\begin{proof}
Let $X$ be a non-normal Gorenstein stable surface satisfying $K_{X}^2 = 2$ and $\chi(\osh{X}) = 4$, let $\overline{X}$ be its normalisation, denote its minimal resolution by $g\colon Y\to \overline{X}$ and let $\sigma\colon Y\to \minmod{Y}$ be a minimal model of $Y$.
By our description of the normalisation (Proposition~\ref{prop:description-of-normalisation}), the branch curve $B\subset\PP^{2}$ of the canonical double-cover decomposes as $B = B'+2B''$ for two reduced effective divisors $B',B''$ and $\overline{X}$ is the double-cover of $\PP^{2}$ branched over $B'$.
We have to have $\deg(B')\in\{0,2,4,6\}$, where $X\in\Mfrak^{(i)}$ if and only if $\deg(B') = 2i$.

In case $B' = 0$, observe that $X = \PP^{2}\amalg_{B''}\PP^{2}$, as the double-cover branched over $2B''$ and $Y = \overline{X} = \PP^{2}\amalg\PP^{2}$ is the unbranched double cover of the plane.

For the remaining cases, we make use of formulas and basic facts concerning double-covers which can be found in Barth, Hulek, Peters, Van de Ven \cite[V~22]{BHPV}.

If $\deg(B') = 2$, we have two cases: either $B'$ is a smooth conic, or the union of two lines.
In the first case, $\overline{X}$ is $\PP^{1}\times\PP^{1}$ and in the latter, it is the quadric cone, which is resolved by the Hirzebruch surface $\FF_{2}$.
That is, the minimal models are all rational.

If $X\in \Mfrak^{(2)}$, then $B'$ is a quartic with only simple singularities, unless it is a union of four concurrent lines (cf. Hui's classification~\cite{Hui:1979}).
If $X\in\Mfrak_{2;\emptyset}$, i.e., $B'$ has only simple singularities, $\overline{X}$ is a del Pezzo surface of degree $2$ (possibly singular, with ADE-singularities corresponding to those of $B'$), the double-cover being defined by the anti-canonical linear system.
In fact, $-K_{Y} = -g^{*}K_{\overline{X}}$ is ample, as the pull-back of an ample bundle along a finite morphism and $K_{Y}^2 = K_{\overline{X}}^2 = 2$ since the degree is $2$.

If $X\in\Mfrak_{2;2}$, i.e., $B'$ is the union of four concurrent lines, then the pencil of lines through their common intersection point gives rise to a ruling of $Y$ over a curve of genus $1$.
Explicitly, the blow up of $\PP^{2}$ in the quadruple-point is the Hirzebruch surface $\FF_{1}$ and the double-cover over the four fibres of the ruling $\FF_{1}\to \PP^{1}$ coming from the four branches of $C$ induces a ruling $Y\to E$, where the elliptic curve $E$ is the double-cover of $\PP^{1}$ branched over the four points corresponding to the lines in question.

Finally, in case $X\in \Mfrak^{(1)}$, where $B'$ is a sextic, the only non-simple singularities $B'$ can have are: $(a)$ none; $(b)$ a (not necessarily ordinary) quadruple-point, $(c)$ a (possibly degenerate) [3;3]-point (see~\ref{def:n-n-sings} for the definition), $(d)$ a pair of non-degenerate $[3;3]$-points. In case $(d)$, the sextic decomposes as the union of three conics (of which at least two are smooth), passing through the two $[3,3]$-points.
Here, the cases $(a)$, $(b)$, $(c)$ and $(d)$ correspond to the cases that $X\in \Mfrak_{1;\emptyset}$, or $X\in \Mfrak_{1;2}\cup\Mfrak_{1;\oltwo}$, or $X\in \Mfrak_{1;1}\cup\Mfrak_{1;\olone}$, or $X\in \Mfrak_{1;11}$, respectively.

That $Y$ is a K3-surface in case $(a)$ is well-known.
Furthermore, by the canonical bundle formula, in all cases, $\omega_{\overline{X}} = \osh{\overline{X}}$.
In the remaining cases $(b)$--$(d)$, this implies $\kappa(Y) = -\infty$, for, $-K_{Y}$ is the sum of the exceptional divisors over the elliptic singularities.
Therefore, $\minmod{Y}$ is either rational (if $\chi(\osh{Y}) = 1$), or ruled of genus $1-\chi(\osh{Y})\geq 1$.
Thus, we only have to compute the holomorphic Euler characteristic of $Y$.
Since $\overline{X}$ is a flat degeneration of a K3-surface, $\chi(\osh{\overline{X}}) = 2$.
From this we finally conclude $\chi(\osh{Y}) = 1$ in case $(b)$ or $(c)$ and $\chi(\osh{Y}) = 0$ in case $(d)$, as claimed.
\end{proof}

\subsubsection{The Hodge type in the non-normal case}\label{sec:non-normal-Hodge type}

For a double-cover of the plane $X\to\PP^{2}$ branched over a half-log-canonical curve $B = B'+2B''$ where $B'$ and $B''$ are reduced, the Hodge type of $X$ depends not just on the irrational singularities of $B'$, but also on the nodes of $B''$ and the way how $B'$ and $B''$ meet.
In particular, the Hodge type is not constant on the strata of the irrationality stratification of the locus of non-normal surfaces.
The list of possibilities gets quite complicated for the cases we would have to consider here.
By way of example, we indicate the possible Hodge types on $\Mfrak_{2;\emptyset}$ and $\Mfrak^{(4)} = \Mfrak_{4;\emptyset}$.
The remaining strata can be dealt with analogously.

\begin{example}\label{example:wild-Hodge type-on-non-normal1}
Given $X\in\Mfrak_{2;\emptyset}$, we let $\overline{X}\to X$ be the normalisation and denote the conductor loci by $F\subset X$ and $\overline{F}\subset \overline{X}$.
As explained in the introduction, $X$ is the push-out of the diagram $F\leftarrow\olF\to \overline{X}$ and by Peters and Steenbrink \cite[Corollary-Definition 5.37]{PS:2008} we get the associated Mayer--Vietoris  exact sequence
$$0\to H^1(\overline{F};\CC)\to H^2(X;\CC)\to H^2(\overline{X};\CC)\oplus H^2(F;\CC)\to H^2(\overline{F};\CC).$$
In fact, the left-most term is $H^1(\overline{X};\CC)\oplus H^1(F;\CC) = 0$, which can be seen as follows:
Recall from Proposition~\ref{prop:description-of-normalisation} that if $B = B'+2B''$ is the branch curve of $X$, where $B'$ is a reduced quartic and $B''$ is a reduced conic, then $\overline{X}$ is the double-cover branched over $B'$ and $F\cong B''$.
In particular, $H^{1}(F;\CC) = 0$.
Since $\overline{X}$ is rational, $H^{1}(\overline{X};\CC) = 0$ as well.

If, in addition, $\olF$ is connected, then $H^{2}(F;\CC)\to H^2(\overline{F};\CC)$ is an isomorphism.
Thus, $\dim(H^2(X;\CC))^{2,0} = h^{2,0}(\overline{X})=0$, $\dim(H^2(X;\CC))^{1,0} = \dim(H^1(\overline{F};\CC))^{1,0}$ and $\dim(H^2(X;\CC))^{0,0} = \dim(H^1(\overline{F};\CC))^{0,0}$.
Now recall that $\overline{F}$ is a double-cover of $B''$ branched over $B''|_{B'}$.
Thus, if $B'$ and $B''$ meet transversely, then $\overline{F}$ is a smooth curve of genus $3$, hence $X$ has Hodge type $\lozenge_{0,3}$.
As $B''|_{B'}$ gets doubled points, either due to tangency or due to double-points of $B'$ along $B''$, $\overline{F}$ acquires nodes.
This results either in a nodal curve of genus $2$, a curve of genus $1$ with $2$ nodes, a curve of genus $0$ with $3$ nodes, or the union of $2$ rational curves meeting transversely in $4$ points.
The first two cases give Hodge types $\lozenge_{1,2}$, $\lozenge_{2,1}$, the latter two have Hodge type $\lozenge_{3,0}$.

If $B''$ is the union of two lines, then by the same argument as above, the $(2,0)$-part is trivial, so that there are no more possible Hodge types than those above.
The same applies if we pass to $\Mfrak_{2;2}$, where $B'$ has a quadruple-point.
\end{example}

\begin{example}\label{example:wild-Hodge type-on-non-normal2}
Let $B''$ be a smooth or nodal but reduced quartic in $\PP^2$.
With branch curve $B = 2B''$, we get that $X = \PP^{2}\amalg_{B''}\PP^{2}$ and $\overline X = \PP^{2}\amalg \PP^{2}$ with $\olF = B''\amalg B''$ and $F\cong B''$ in such a way that $\pi|_{\olF}\colon \olF\to F$ is the trivial double-cover.
Tracing through the maps in the Mayer--Vietoris sequence for $X$ as the push-out of the diagram $F\leftarrow\overline{F}\to \overline{X}$, one quickly finds an isomorphism of Deligne's mixed Hodge structures $H^2(X;\CC)\cong H^1(B'';\CC)$.
Therefore, the surfaces parametrised by the irreducible stratum $\Mfrak^{(4)} = \Mfrak_{4;\emptyset}$ realise all Hodge types $\lozenge_{r,s}$ with $r+s = 3$.
\end{example}

\section{Further remarks and questions}
\label{section:further-remarks}
\subsection{Comparison with known compact moduli spaces of curves}
There are at least three related compactifications of the moduli space of smooth plane curves of degree $8$, namely, the GIT-quotient of $|\osh{\PP^{2}}(8)|$ under the action of $\PGL(3,\CC)$, Hassett's moduli space of stable log-surfaces which admit a smoothing to $(\PP^{2},C)$ with $C$ a curve of degree $8$ \cite{Hassett:1999} and Hacking's moduli space $\Mcal_{8}$ of so-called \emph{stable pairs of degree $8$}, namely, pairs $(X,D)$ consisting of a surface $X$ and an effective $\QQ$-Cartier $\ZZ$-divisor $D$ on $X$, where $\osh{X}(3D+8K_{X})\cong\osh{X}$ and such that $(X,(\tfrac{3}{8}+\epsilon)D)$ is a stable log-surface for some $\epsilon > 0$, subject to a smoothability condition that makes sure that the plane octics are dense \cite{Hacking:2004}.

\begin{question}\label{rem:hacking-compare}
Perhaps, a $\QQ$-Gorenstein degeneration of a smooth (or Gorenstein) stable surface $X\in\Mfrak_{2,4}$, say canonically the double-covers of $\PP^2$ branched over $B\in |\osh{\PP^2}(8)|$, will itself be a double-cover of a surface $X'$ branched over some curve $B'$, where $(X',\tfrac{1}{2}B')$ is semi-log-canonical.
This raises the question whether (the closure of $\XXX$ in) $\overline{\Mfrak}_{2,4}$ is isomorphic to some projective moduli space of semi-log-canonical pairs $(X,D)$ which are in some sense degenerations of log-canonical pairs of the form $(\PP^2,\tfrac{1}{2}B)$ with an octic $B$.
(For an example, see \ref{ex:p114} below.)

More concretely, let $\Mcal_{8}$ be Hacking's moduli stack of  $\QQ$-Gorenstein smoothable families of stable pairs of degree $8$, which is a separated, proper and smooth Deligne--Mumford stack (Hacking \cite[Theorem 4.4 \& 7.2]{Hacking:2004}), with coarse moduli space denoted by $M_{8}$.

Each half-log-canonical octic $B\in U$, gives rise to a stable pair of degree $8$, $(\PP^{2},B)$, for, $(\PP^{2},(\tfrac{3}{8}+\epsilon)B)$ is a stable log-surface for all $0<\epsilon\leq \tfrac{1}{8}$.
This induces a morphism $\XXXst\to \Mcal_{8}$ which seems worthwhile to study.
\emph{Does it extend to a morphism $\olMcal_{2,4}\to \Mcal_{8}$?}

In that case, one naive hope would be that the locus of stable pairs $(X,D)$ of degree $8$ such that $(X,\tfrac{1}{2}D)$ is semi-log-canonical is closed and isomorphic to $\olMfrak_{2,4}$, but this locus is not closed in $M_{8}$ (see Example~\ref{ex:blabla} below).
\end{question}

\begin{remark}\label{rem:GIT-embedding}
For a stable pair $(\PP^2,D)$ of degree $d$ in the sense of Hacking, the curve $D$ is GIT-stable, see Hacking \cite[Section 10]{Hacking:2004}.
In particular, we get a morphism from $\XXX$ to the GIT quotient $|\osh{\PP^2}(8)|^{\textnormal{ss}}/\PGL(3,\CC)$ and, thus, yet another possible compactification which could be studied. 
\end{remark}

\begin{remark}
While Hacking's moduli space $M_{8}$ properly contains $\XXX$, Hassett's space $\olPcal_{8}$ is too small; the plane curves it parametrises have to have log-canonical threshold at least $1$.
\end{remark}

\begin{question}
Recall that the stratum $\Mfrak^{(4)} = \Mfrak_{4;\emptyset}$ is isomorphic to the moduli space of nodal plane quartics.
Is the closure of $\Mfrak^{(4)}$ in $\olMfrak_{2,4}$ isomorphic to Hassett's  compactification of the space of smooth plane quartics \cite{Hassett:1999}?
\end{question}

\subsection{Beyond the Gorenstein locus}\label{section:beyond-gorenstein}

We briefly demonstrate that the Gorenstein locus $\XXX$ is properly contained and not closed in $\olMfrak_{2,4}$.

\begin{example}\label{ex:p114}
The log-canonical surface $\PP(1,1,4)$ has an essentially unique $1$-pa\-ra\-me\-ter $\QQ$-Gorenstein smoothing $\Zscr\to\AA^1$, where $\Zscr\subset \PP(2,2,2,4)\times\AA^{1}$ is given as the vanishing locus of the polynomial $x_{1}^2+ty-x_{0}x_{2}$, where $t$ is the coordinate of $\AA^{1}$ and where $x_{0},x_{1},x_{2}$ and $y$ are the coordinates of $\PP(2,2,2,4)$, cf.\ Hacking's exposition \cite[p.\ 52 f]{Hacking:2016}.
For all $t\not= 0$, the fibre $\Zscr_{t}$ is isomorphic to $\PP(2,2,2)\cong\PP^{2}$ and $\Zscr_{0}$ is isomorphic to $\PP(1,1,4)$.
Let $Q\subset \PP(2,2,2,4)$ be a sufficiently general hypersurface of degree $16$, defining a relative Cartier divisor $\Zscr\cap Q$ missing the singular point in the special fibre.
Assume furthermore that each pair $(\Zscr_{t},\tfrac{1}{2}\Zscr_{t}\cap Q)$ is log-canonical, at least for all $t$ sufficiently close to $0$.
For example, we may assume that each of the curves $Q_{t} := \Zscr_{t}\cap Q$ is smooth.
Let $\Xscr\to\Zscr$ be the double-cover branched over $Q\cap\Zscr$; this defines a $\QQ$-Gorenstein family over $\AA^{1}$ where each fibre $\Xscr_{t}$, $t\not=0$, is a double-cover of $\PP^2$ branched over an octic, hence, a Gorenstein stable surface with $K_{\Xscr_{t}}^2 = 2$ and $\chi(\osh{\Xscr_{t}}) = 4$, whilst the central fibre $\Xscr_{0}$ is a double-cover of $\PP(1,1,4)$, branched over a curve $Q_{0}$ of degree $16$ missing the singular point and such that the pair $(\PP(1,1,4),\tfrac{1}{2}Q_{0})$ is log-canonical.
Hence, $\Xscr_{0}$ is semi-log-canonical and $K_{\Xscr_{0}}$ is the pull-back of the $\QQ$-Cartier divisor $K_{\PP(1,1,4)}+\tfrac{1}{2}Q_{0}\in|\osh{\PP(1,1,4)}(2)|$, so that $\Xscr_{0}$ is stable of Gorenstein-index $2$.
Since the family is $\QQ$-Gorenstein, we conclude that $\Xscr_{0}$ is in the closure of $\XXX$ in $\olMfrak_{2,4}$.
\end{example}

Note that the pair $(\Zscr_{0},Q_{0})$ of the above example is an element of Hacking's moduli space $\Mcal_{8}$ at the boundary of the image of $\XXX$.
There are, however, many elements in $\Mcal_{8}$ which are not contained in the image of $\XXX$, despite the fact that $\XXX$ is dense in $\Mcal_{8}$.
There is also something to say about the boundary of (the image of) $\XXX$ in Hacking's moduli space $\Mcal_{8}$ of stable pairs of degree $8$ (and, therefore, also in the GIT-quotient).
Namely, there are curves $C\subset\PP^2$ such that $(\PP^{2},C)$ is a stable pair of degree $8$, but where $(\PP^{2},\tfrac{1}{2}C)$ is not log-canonical; since the image of $\XXX$ in $\Mcal_{8}$ is dense, they occur as limits of classes of half-log-canonical curves, though.

\begin{example}\label{ex:blabla}
Every octic $C\subset\PP^2$ with global log-canonical threshold between $\tfrac{3}{8}$ and $\tfrac{1}{2}$ gives rise to a pair in Hacking's moduli space $\Mcal_{8}$ which is not contained in the image of $\XXX$.

One kind of example is given by general octics with a singular point of type $Z_{11}$, whose log-canonical threshold is $\tfrac{7}{15}$.
(A curve singularity of type $Z_{11}$ is analytically locally the union of an $E_{6}$-singularity and a general line passing through it.
To get an explicit example, we can just take a quartic with an $E_{6}$-singularity and a general quartic passing through the $E_{6}$-singularity, resulting in an octic with one singularity of type $Z_{11}$ and $13$ ordinary double-points.)
This is by far not the only type of singularity that can occur on an octic which has no (or only admissible) other singularities.
\end{example}
\appendix
\section{Half-log-canonical plane curves of small degree}
\label{section:curves}
In this appendix, we prove the results about plane curves of degree at most eight with $[3;3]$- and quadruple-points which were used in the earlier chapters.
We obstruct the existence of certain configurations using basic intersection theory and well-known results about the Milnor number.

The classification of possible (configurations of) singularities on plane conics or cubics is easy.
In the case of quartics, a complete classification is known; it can be found in Hui's thesis \cite{Hui:1979}.
It turns out that the only reduced quartics with a non-simple singularity are the unions of four concurrent lines, admitting a unique ordinary quadruple-point, also called singularity of type $X_{9}$.
Degtyarev \cite{Degtyarev:1989} has classified all plane quintics up to rigid isotopy and all the possible singularities on quintics.
A list of possible configurations with total Milnor number at least $12$ can be found in Wall \cite{Wall:1996}.
The last case where the complete classification is known, the sextics, is mostly due to Urabe \cite{Urabe:1989}, Yang \cite{Yang:1996} and Degtyarev, see \cite[Section~7.2.3]{Degtyarev:2012} and the references therein.

Already the list provided by Yang \cite{Yang:1996} (even though restricted to the sextics with maximal total Milnor number $19$) is so long that for certain questions, it is not easy to read off the relevant informations from the data.
There are 128 irreducible maximising sextics and many more reducible ones and \emph{``[t]he list [of the remaining reduced sextics] is too long to be printed [in an article]''} \cite[Remark~4.1]{Yang:1996}.
In conclusion, the classification of possible configurations of singularities on octic curves is clearly out of reach.
Therefore, we study here just as much as we need to understand the strata; that is, we ignore simple singularities and concentrate only on those with log-canonical threshold exactly $\tfrac{1}{2}$.

For a start, we recall the notion of an $n$-fold-point with an infinitely near $n$-fold-point, an $[n;n]$-point, for short.
A non-degenerate $[n;n]$-point should be pictured as $n$-fold-points with $n$ local branches with a common tangent direction; however, degenerate $[n;n]$-points may have less branches.

\begin{definition}\label{def:n-n-sings}
Let $0\in C\subset\CC^2$ be the germ of an isolated curve singularity, defined by a convergent power series $f\in\CC\{x,y\}$.
\begin{enumerate}[label=\arabic*.]
\item The germ (or the point, slightly abusively) is said to be a \emph{$n$-fold-point}, if $f$ has multiplicity $n$, i.e., $f\in(x,y)^n-(x,y)^{n+1}$.
\item If, moreover, the degree $n$-part  of $f$ is the product of $n$ pairwise distinct linear factors, then the germ is said to be an \emph{ordinary $n$-fold-point}. That is, $C$ is a union of $n$ smooth curves meeting transversely in $0$.
\item An $n$-fold-point $0\in C\subset\CC^2$ is said to have an \emph{infinitely near $n$-fold-point} if the strict transform $C'\subset X$ of $C$ in the blow-up $(X,E)\to(\CC^2,0)$ has an $n$-fold-point along $E$.
In this case, the germ is called an $[n;n]$-point.
It is called \emph{non-degenerate} if the $n$-fold-point of $C'$ is ordinary.
\item If $C\subset\PP^2$ is a plane curve with an $[n;n]$-point $p\in C$, the point on the exceptional line $E\subset\mathrm{Bl}_{p}\PP^2$ where the strict transform $C'$ has its $n$-fold-point corresponds to a tangent direction at $p$ in $\PP^2$; we refer to this as the \emph{distinguished tangent} of the $[n;n]$-point. The unique line in $\PP^2$ passing through $p$ in this direction is called the \emph{distinguished tangent line}.
\end{enumerate}
\end{definition}
\begin{remark}
Note that the distinguished tangent line $\ell\subset\PP^2$ of an $[n;n]$-point $p\in C\subset\PP^2$ is determined by the property that the intersection multiplicity of $\ell$ and $C$ at $p$ exceeds $n$.
In particular, if $C$ contains a line through $p$, then this must be its distinguished tangent line.
\end{remark}

We will mostly be concerned with certain $[2;2]$-points, $[3;3]$-points and $4$-fold-points, also known as \emph{quadruple-points}.
We give a quick overview:

A $[2;2]$-point is a double-point whose strict transform in the blow-up has a double-point along the exceptional line.
That is, the $[2;2]$-points are the singularities of type $A_{n}$ as $n\geq 3$, where $A_{3}$ is the non-degenerate $[2;2]$-point.

The half-log-canonical $[3;3]$-points are the singularities of type $J_{10}$ (the non-degenerate $[3;3]$-point) and $J_{2,p}$ for $p\geq 1$.
Blowing up once, the strict transform of a $J_{10}$ has a non-degenerate triple-point (a $D_{4}$) along the exceptional line and the strict transform of a $J_{2,p}$ has a $D_{4+p}$ along the exceptional line.
Moreover, the branches are transversal to the exceptional line, for otherwise it would be a quadruple-point (or worse).
In particular, no component of a $[3;3]$-point is an ordinary cusp $A_{2}$.

Likewise, the ordinary quadruple-points are the singularities of type $X_{9}$, whereas the half-log-canonical degenerate quadruple-points split up into the two families $X_{p}$, $p\geq 10$ and $Y_{r,s}$ for $r,s\geq 1$.
The singularities of type $X_{p}$ are, locally analytically, the union of a degenerate double-point of type $A_{p-8}$ and a non-degenerate double-point and those of type $Y_{r,s}$ are unions of two degenerate double-points of type $A_{r+1}$ and $A_{s+1}$.
In particular, an $X_{p}$, $p\geq 10$, has a single special tangent direction (the distinguished tangent direction of the underlying degenerate double-point) and a $Y_{r,s}$ has two such.

\begin{lemma}\label{lemma:degree-bounds}
Let $C$ be a plane curve of degree $d$.
Then the following hold:
\begin{enumerate}[label=\alph*)]
\item If $C$ has an $n$-fold-point, then $d\geq n$ and $d = n$ if and only if $C$ is a union of $n$ concurrent lines, the intersection-point being the $n$-fold-point.
\item\label{lemma:degree-bounds:item:n-and-m-fold} If $C$ has an $m$-fold-point and an $n$-fold-point, then $d\geq m+n-1$ and if $d = m+n-1$, then $C$ contains the line joining those two points.
\item\label{lemma:degree-bounds:item:collinear} More generally, if $C$ has $s$ collinear singular points of multiplicity $n_{i}$, $i=1,\dots,s$, then $d\geq 1-s+\sum_{i=1}^s n_{i}$ and if $d = 1-s+\sum_{i=1}^s n_{i}$, then the line joining them is contained in $C$.
\item\label{lemma:degree-bounds:item:one-nn-point} If $C$ has an $[n;n]$-point, then $d\geq 2n-1$ and if $d = 2n-1$, then $C$ contains the distinguished tangent line.
\item\label{lemma:degree-bounds:item:nn-and-m} If $C$ has an $[n;n]$-point and an $m$-fold point on the distinguished tangent line of the $[n;n]$-point, then $d\geq 2n+m$, unless $C$ contains the distinguished tangent, in which case $d \geq 2n+m-2$.
\item If $C$ has an $[m;m]$- and an $[n;n]$-point with a common distinguished tangent line, then $d\geq 2n+2m-3$ and if  $d< 2n+2m$, then $C$ contains the distinguished tangent line.
\end{enumerate}
\end{lemma}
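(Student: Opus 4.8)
The plan is to run every one of the six estimates through a single mechanism: Bézout's theorem for a well-chosen line $L$ against $C$, fed by two local multiplicity bounds. First I would record the elementary bound that if $\mult_p(C)=n$ and $L$ is a line through $p$ with $L\not\subset C$, then $I_p(L,C)\geq n$, and then establish the sharper bound that is really the engine of the whole lemma: if $L$ is the distinguished tangent line of an $[n;n]$-point at $p$, then $I_p(L,C)\geq 2n$. This follows by blowing up $p$: writing $E$ for the exceptional line, the proper transform $\widetilde L$ meets $E$ exactly at the point $\tilde p$ where $\widetilde C$ carries its infinitely near $n$-fold point (this is precisely what ``distinguished tangent'' encodes), so the blow-up formula $I_p(L,C)=\mult_p(C)\,\mult_p(L)+I_{\tilde p}(\widetilde L,\widetilde C)$ gives $I_p(L,C)\geq n+\mult_{\tilde p}(\widetilde C)=2n$.

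With this in place, each part splits into a uniform dichotomy according to whether the relevant line $L$ (the line through the marked points, or the common distinguished tangent) is a component of $C$. In the branch $L\not\subset C$, Bézout reads $d=L\cdot C\geq\sum_x I_x(L,C)$, and summing the local bounds over the distinct marked points yields the ``generic'' inequality; for the final statement this gives $d\geq 2m+2n$. In the branch $L\subset C$, I would use that the marked points are isolated singularities of $C$: this forbids $L$ from occurring as a multiple component (otherwise $C$ would be non-reduced, hence singular, all along $L$), so $L$ appears with multiplicity one and we may write $C=L+C'$ with $L\not\subset C'$ and $\deg C'=d-1$. Removing $L$ lowers each relevant multiplicity by exactly one, and — crucially — it does so both at $p$ and at the infinitely near point, since $\widetilde C=\widetilde L+\widetilde{C'}$ with $\widetilde L$ transverse to $E$ at $\tilde p$. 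Thus an $[n;n]$-point of $C$ with tangent $L$ leaves $C'$ with $\mult_p(C')=n-1$ and $\mult_{\tilde p}(\widetilde{C'})=n-1$, whence $I_p(L,C')\geq 2(n-1)$; applying Bézout to $L$ and $C'$ recovers the estimate with the expected correction $-1$ per removed line. For the final statement this reads $d-1=L\cdot C'\geq 2(m-1)+2(n-1)$, i.e.\ $d\geq 2m+2n-3$.

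Parts a)--e) are then the same computation with fewer or lower-order marked points, and all the containment claims (``if $d$ attains the extremal value then $C\supset L$'') come for free: the $L\not\subset C$ branch always produces a strictly larger bound than the $L\subset C$ branch, so attaining the smaller value forces $L$ to be a component. Concretely, for the final statement the branch $L\not\subset C$ gives $d\geq 2m+2n$, so whenever $d<2m+2n$ we are necessarily in the branch $L\subset C$, which simultaneously proves $C$ contains the distinguished tangent line and delivers $d\geq 2m+2n-3$.

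I expect the main obstacle to be the bookkeeping in the $L\subset C$ branch, namely verifying that deleting the line component really drops the \emph{infinitely near} multiplicity and not merely $\mult_p$; this is exactly where the decomposition $\widetilde C=\widetilde L+\widetilde{C'}$ and the transversality of $\widetilde L$ to $E$ must be used carefully. A related subtlety, which I would flag explicitly, is the reduction to multiplicity-one components: without the isolated-singularity hypothesis a multiplicity-$k$ line would only yield the weaker $d\geq 2m+2n-3k$, so it is precisely isolatedness (equivalently, reducedness of $C$ along $L$) that pins the per-line correction to $-1$ and makes the stated bound correct.
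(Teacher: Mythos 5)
Your proposal is correct and follows essentially the same route as the paper: Bézout against the joining line or distinguished tangent line, the local bound $I_p(L,C)\geq 2n$ at an $[n;n]$-point, and the dichotomy on whether $L\subset C$, with the isolated-singularity hypothesis forcing $L$ to appear with multiplicity one so that the residual curve $C-L$ carries an $[n-1;n-1]$-point. The only difference is that you spell out, via the blow-up decomposition $\widetilde C=\widetilde L+\widetilde{C'}$, the fact that removing $L$ also drops the infinitely near multiplicity, which the paper merely asserts.
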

\begin{proof}
The proofs of those statements are very similar; by way of example, we only prove a few of them.
Note that since we assumed $n$-fold-points and $[n;n]$-points to be isolated singularities, if $C$ contains a line $L$ through such a point, is does so with multiplicity $1$. In particular, the residual curve $C-L$ (in divisor-notation) has degree $d-1$ and does not contain $L$.
\begin{enumerate}[label=\emph{\alph*)}]
\setcounter{enumi}{2}
\item If $C$ is as claimed, then the line $L$ joining the $s$ singular points witnesses $d = CL\geq \sum_{i=1}^sn_{i}$, unless $L\subset C$, in which case the same argument applied to the residual curve $C' = C-L$ yields $d-1 = C'L\geq \sum_{i=1}^s(n_{i}-1)$, hence the claim.
\item If $C$ has an $[n;n]$-point, then the distinguished tangent line $L$ witnesses that $d = CL\geq 2n$, unless $C$ contains $L$. In this case, the residual curve $C' = C-L$ has an $[n-1;n-1]$-point with distinguished tangent direction $L$; thus, we get $d-1 = C'L\geq 2(n-1)$, hence, $d\geq 2n-1$, as claimed.
\end{enumerate}
\end{proof}

The following is an immediate corollary.
\begin{corollary}\label{cor:constraints-33-pts}
Let $C$ be a plane octic curve. Then the following holds:
\begin{enumerate}[label=\alph*)]
\item\label{cor:constraints-33-pts:item:distinct-tangents} Any two $[3,3]$-points on $C$ have distinct distinguished tangent lines.
\item\label{cor:constraints-33-pts:item:no-3-collinear} No three $[3;3]$-points on $C$ are collinear.
\end{enumerate}
\end{corollary}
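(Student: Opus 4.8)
The plan is to read both statements straight off Lemma~\ref{lemma:degree-bounds} and the remark following Definition~\ref{def:n-n-sings}, using only that $\deg C = 8$ and that a $[3;3]$-point is in particular a triple-point, i.e.\ has $n = 3$.

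For part~\ref{cor:constraints-33-pts:item:distinct-tangents} I would argue by contradiction. Suppose two distinct $[3;3]$-points of $C$ had a common distinguished tangent line. Then the last statement of Lemma~\ref{lemma:degree-bounds}, applied with $m = n = 3$, forces $\deg C \geq 2n + 2m - 3 = 9$, which is incompatible with $\deg C = 8$. Hence no two $[3;3]$-points can share a distinguished tangent line.

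For part~\ref{cor:constraints-33-pts:item:no-3-collinear}, suppose $p_1, p_2, p_3$ were three collinear $[3;3]$-points, all lying on a line $L$. Since each $p_i$ is a triple-point, the local intersection numbers satisfy $(C\cdot L)_{p_i} \geq \mult_{p_i}(C) = 3$. If $L$ were not a component of $C$, Bézout would give $8 = \deg C = C\cdot L \geq 3 + 3 + 3 = 9$, a contradiction; thus $L \subseteq C$. But a line contained in $C$ and passing through a $[3;3]$-point is necessarily its distinguished tangent line, by the remark after Definition~\ref{def:n-n-sings}. Consequently $L$ is the distinguished tangent line of both $p_1$ and $p_2$, contradicting part~\ref{cor:constraints-33-pts:item:distinct-tangents} established above.

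The only step requiring a moment's thought — and the one I would flag as the main obstacle — is the passage to $L \subseteq C$ in part~\ref{cor:constraints-33-pts:item:no-3-collinear}: the collinear-points bound of Lemma~\ref{lemma:degree-bounds}~\ref{lemma:degree-bounds:item:collinear} only yields $\deg C \geq 7$ and is therefore vacuous for octics, so one cannot simply quote it. Instead one must run the sharper triple-point count above to force $L$ into $C$, and only then bring in the $[3;3]$-structure (via the distinguished-tangent remark) to reduce to part~\ref{cor:constraints-33-pts:item:distinct-tangents}. Everything else is a direct application of the degree bounds.
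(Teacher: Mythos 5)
Your proof is correct, and it is essentially the fleshed-out version of what the paper intends: the paper offers no argument at all, merely declaring the corollary ``immediate'' from Lemma~\ref{lemma:degree-bounds}. Part~\ref{cor:constraints-33-pts:item:distinct-tangents} really is immediate from the last item of that lemma with $m=n=3$ (giving $d\geq 9$), exactly as you say. Your flag on part~\ref{cor:constraints-33-pts:item:no-3-collinear} is well taken: the stated bound in Lemma~\ref{lemma:degree-bounds}~\ref{lemma:degree-bounds:item:collinear} only yields $d\geq 7$ and so is vacuous for an octic; what one actually needs is the dichotomy inside its proof (either $L\not\subseteq C$ and $d\geq\sum n_i=9$, or $L\subseteq C$), which forces $L\subseteq C$, after which the remark following Definition~\ref{def:n-n-sings} makes $L$ the common distinguished tangent line of all three points and part~\ref{cor:constraints-33-pts:item:distinct-tangents} finishes the job. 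So you have not taken a different route; you have supplied the one step the paper's ``immediate'' quietly elides.
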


This implies that two $[3;3]$-points on a conic are in general position such that there is exactly a pencil of conics joining both points, passing through them in distinguished tangent direction.
Three $[3;3]$-points on a conic can be in special position in the sense that the tangents may align along a conic.
It turns out that there are at most three $[3;3]$-points on a plane octic, but before we can prove this, we have to recall a few basic facts from singularity theory.
We refer to Milnor's seminal book \cite{Milnor:1968} or Wall \cite[Chapter~6]{Wall:2004} for the local theory.

Recall that with a holomorphic function germ $f\in\CC\{x,y\}$ we can associate the \emph{Milnor number} $\mu(f) = \dim_{\CC}(\CC\{x,y\}/J_{f})$ where $J_{f} = (\tfrac{\partial f}{\partial x},\tfrac{\partial f}{\partial y})$ is the Jacobian ideal of $f$ generated by the partial derivatives.
Let $C\subset\PP^{2}$ be a plane curve passing through a point $p\in C$ and choose local holomorphic coordinates $x,y$ at $p\in\PP^{2}$.
Then the curve $C$ is the vanishing locus of a function germ $f\in\CC\{x,y\}$ and it makes sense to define the \emph{Milnor number of $C$ at $p$}, $\mu_{p}(C):=\mu(f)$.
If $C$ is reduced, we define its \emph{total Milnor number} $\mu(C) = \sum_{p\in C_{\textnormal{sing}}}\mu_{p}(C)$.

We recall from Wall's exposition \cite[Sections~7.1 \&~7.5]{Wall:2004}:
\begin{lemma}\label{lemma:milnor-number-stuff}
Let $C\subset\PP^{2}$ be a reduced plane curve of degree $d$.
\begin{enumerate}[label=\alph*)]
\item\label{lemma:milnor-number-stuff:item1} The total Milnor number of $C$ is bounded by $\mu(C)\leq (d-1)^2$ and the maximum $\mu(C) = (d-1)^2$ is attained only by the union of $d$ concurrent lines with its $(d-1)$-fold-point.
\item\label{lemma:milnor-number-stuff:item2} Let $C^\nu$ be the normalisation of $C$.
Then
$$\chi\nolimits_{\textnormal{top}}(C^\nu) = (3-d)d+\sum_{p\in C_{\textnormal{sing}}}(\mu_{p}(C)+r_{p}(C)-1),$$
where $r_{p}(C)$ is the number of analytically local branches of $C$ through $p$.
\end{enumerate}
In particular, if $C$ is a reduced octic with four $[3;3]$- or quadruple-points, then $C$ has at least four rational components, $\chi_{\textnormal{top}}(C^{\nu})\geq 8$, and if $C$ has exactly four components, then there are no additional singular points.
\end{lemma}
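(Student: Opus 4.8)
The plan is to establish the final assertion, taking parts~\ref{lemma:milnor-number-stuff:item1} and~\ref{lemma:milnor-number-stuff:item2} as recalled from Wall. The only ingredient I need is the Euler characteristic formula of part~\ref{lemma:milnor-number-stuff:item2}, which for $d = 8$ reads
$$\chi\nolimits_{\textnormal{top}}(C^{\nu}) = -40 + \sum_{p\in C_{\textnormal{sing}}}\bigl(\mu_{p}(C)+r_{p}(C)-1\bigr),$$
so the task reduces to bounding the local contributions $\mu_{p}(C)+r_{p}(C)-1$ from below. First I would rewrite each contribution by means of the classical Milnor formula $\mu_{p} = 2\delta_{p}-r_{p}+1$ for reduced plane curve singularities (see Milnor~\cite{Milnor:1968}), giving $\mu_{p}(C)+r_{p}(C)-1 = 2\delta_{p}$, twice the delta-invariant.

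The crucial step is the uniform lower bound $\delta_{p}\geq 6$ for every quadruple-point and every $[3;3]$-point. I would deduce it from the standard expression $\delta_{p} = \sum_{q}\binom{m_{q}}{2}$, the sum running over $p$ and its infinitely near points, with $m_{q}$ the multiplicity of the strict transform at $q$. For a quadruple-point the multiplicity $m_{p} = 4$ already yields $\delta_{p}\geq\binom{4}{2} = 6$; for a $[3;3]$-point the triple-point at $p$ together with the infinitely near triple-point on the exceptional line, present by the very definition of an $[n;n]$-point (Definition~\ref{def:n-n-sings}), yields $\delta_{p}\geq\binom{3}{2}+\binom{3}{2} = 6$. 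Either way $\mu_{p}(C)+r_{p}(C)-1 = 2\delta_{p}\geq 12$ at each of the four distinguished points.

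Summing the four contributions of at least $12$, and dropping the non-negative contributions of any further singular points, the displayed identity gives $\chi\nolimits_{\textnormal{top}}(C^{\nu})\geq -40 + 48 = 8$. To convert this into the statement about components, I would use that $C^{\nu}$ is a disjoint union of smooth projective curves $C_{i}^{\nu}$ of genera $g_{i}$, so that $\chi\nolimits_{\textnormal{top}}(C^{\nu}) = \sum_{i}(2-2g_{i})$. Each summand is at most $2$, with equality precisely for the rational components; writing $k_{0}$ for their number, the inequality $8\leq\chi\nolimits_{\textnormal{top}}(C^{\nu})\leq 2k_{0}$ forces $k_{0}\geq 4$. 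Hence $C$ has at least four components, at least four of which are rational, and $\chi\nolimits_{\textnormal{top}}(C^{\nu})\geq 8$.

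For the last clause I would argue by an equality analysis. If $C$ has exactly four components, these are exactly the four rational ones just found, so $\chi\nolimits_{\textnormal{top}}(C^{\nu}) = 8$ and therefore $\sum_{p}(\mu_{p}(C)+r_{p}(C)-1) = 48$. Since the four distinguished points already contribute at least $48$, while every additional singular point $q$ would contribute $2\delta_{q}\geq 2 > 0$, no such point can exist. The main obstacle is the delta-bound step: one must ensure that a $[3;3]$-point, which has only multiplicity three at $p$, still reaches the threshold $\delta_{p}\geq 6$ attained by quadruple-points; this is exactly where the infinitely near triple-point guaranteed by Definition~\ref{def:n-n-sings} enters, and it is what separates $[3;3]$-points from ordinary triple-points (for which one would only get $\delta\geq 3$).
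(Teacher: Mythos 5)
Your proposal is correct, and its overall architecture coincides with the paper's: both reduce the \emph{in particular} statement to the local estimate $\mu_{p}(C)+r_{p}(C)-1\geq 12$ at each $[3;3]$- or quadruple-point, feed this into the Euler characteristic formula of part b) to get $\chi_{\textnormal{top}}(C^{\nu})\geq 8$, read off the four rational components from $\chi_{\textnormal{top}}(C^{\nu})=\sum_{i}(2-2g_{i})$, and finish with the same equality analysis when $C$ has exactly four components. Where you genuinely diverge is in how the local bound is obtained. The paper verifies $\mu_{p}+r_{p}-1\geq 12$ for the half-log-canonical types by reading the Milnor numbers $\mu(X_{p})=p$, $\mu(J_{10})=10$, $\mu(Y_{r,s})=9+r+s$, $\mu(J_{2,p})=10+p$ off the classification table, and then defers the general (non-half-log-canonical) case to Wall's Theorem~6.5.9 ``using the concept of infinitely near points'', with details omitted. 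You instead prove the bound uniformly: the identity $\mu_{p}=2\delta_{p}-r_{p}+1$ turns the target into $\delta_{p}\geq 6$, which the multiplicity-sequence formula $\delta_{p}=\sum_{q}\binom{m_{q}}{2}$ delivers at once from $m_{p}=4$ for a quadruple-point and from $m_{p}=m_{p'}=3$ (with $p'$ the infinitely near triple-point guaranteed by the definition of a $[3;3]$-point) in the other case. This buys a self-contained, case-free argument that covers degenerate and non-degenerate singularities alike and makes explicit exactly where ordinary triple-points (with $\delta\geq 3$ only) fall short; the paper's route, by contrast, stays closer to the singularity tables it has already set up and leans on the literature for the residual cases.
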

\begin{proof}
For \ref{lemma:milnor-number-stuff:item1} and \ref{lemma:milnor-number-stuff:item2} see Wall \cite[Section~7.5, p.~177 \& Corollary~7.1.3]{Wall:2004}.
The \emph{in particular}-part can be derived from \ref{lemma:milnor-number-stuff:item2} as follows:
It suffices to show that for a $[3;3]$- or quadruple-point $p\in C$, we have $\mu_{p}(C)+r_{p}(C)-1\geq 12$, since then, if there are at least four such, $\chi_{\textnormal{top}}(C^\nu)\geq 8$ and equality implies that there are no more singular points and that $\mu_{p}(C)+r_{p}(C)-1 = 12$ for all four $p\in C_{\textnormal{sing}}$.
As the normalisation decomposes into disjoint components $C^{\nu} = \coprod_{i=1}^sC^{\nu}_{i}$, we get $\sum_{i=1}^s 2-2g(C_{i}^\nu) = \chi_{\textnormal{top}}(C^\nu)\geq 8$, hence $g(C_{i}^\nu) = 0$ at least four times, which yields four rational components.
Therefore, if $C$ has exactly four components, then all of them are rational and $\chi_{\textnormal{top}}(C^\nu) = 8$.

To complete the proof, we have to show that $[3;3]$- and quadruple-points $p\in C$ indeed satisfy $\mu_{p}(C)+r_{p}(C)-1 \geq 12$. 
For the half-log-canonical singularities this follows from the classification (Proposition~\ref{prop:possible-curve-singularities}) since $\mu(X_{p}) = p$ as $p\geq 9$, $\mu(J_{10}) = 10$, $\mu(Y_{r,s}) = 9+r+s$ as $r,s\geq 1$ and $\mu(J_{2,p}) = 10+p$ for $p\geq 1$; cf.\ Arnold, Gusein-Zade, Varchenko \cite[Chapter 15, p.~246 ff]{AGV:2012}.
The more general case follows from Wall \cite[Theorem~6.5.9]{Wall:2004} using the  concept of infinitely near points; we omit the details.
\end{proof}

Using this, we can prove that an octic has at most three $[3;3]$-points:

\begin{proposition}\label{prop:max-no-of-33-pts}~
\begin{enumerate}[label=\alph*)]
\item\label{prop:max-no-of-33-pts:item:one-33-point} If a plane curve has a $[3;3]$-point, then its degree is at least $5$.
\item\label{prop:max-no-of-33-pts:item:three-33-points} If a plane curve has three $[3;3]$-points, then its degree is at least $8$.
\item\label{prop:max-no-of-33-pts:item:three-33-points-along-conic} If a plane octic curve has three $[3;3]$-points with distinguished tangents along a conic, then the octic contains the conic.
\item\label{prop:max-no-of-33-pts:item:octic} There does not exist a plane octic curve with four $[3;3]$-points.
\end{enumerate}
\end{proposition}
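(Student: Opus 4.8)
The plan is to prove the four assertions in turn, using Bézout's theorem against auxiliary conics for (a)--(c) and the Milnor-number bookkeeping of Lemma~\ref{lemma:milnor-number-stuff} for (d).

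Part~(a) is immediate: a $[3;3]$-point is an $[n;n]$-point with $n=3$, so Lemma~\ref{lemma:degree-bounds}\,(d) gives $d\geq 2n-1=5$. For (b) I would first record the local fact that a smooth conic $Q$ through a $[3;3]$-point $p$ tangent to its distinguished tangent line meets $C$ with $I_{p}(Q,C)\geq 6$; this is a one-line computation in the normal forms $J_{10}$ and $J_{2,\bullet}$ of \ref{table:singularity-classification}, where the distinguished tangent is $\{x=0\}$ and substituting $x=ay^{2}+\cdots$ into the equation shows the result has order at least $6$ in $y$. Given three $[3;3]$-points $p_{1},p_{2},p_{3}$, the requirements ``pass through $p_{1}$ tangent to $\ell_{1}$'', ``pass through $p_{2}$ tangent to $\ell_{2}$'' and ``pass through $p_{3}$'' are five linear conditions on the six-dimensional space of conics, so a conic $Q$ satisfying them exists. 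If $Q$ is irreducible with $Q\not\subset C$, then Bézout gives $2d=Q\cdot C\geq 6+6+3=15$, hence $d\geq 8$; the degenerate possibilities ($Q$ reducible or $Q\subset C$) are disposed of by passing to the residual curve and the line bounds of Lemma~\ref{lemma:degree-bounds} together with Corollary~\ref{cor:constraints-33-pts}\,(a).

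Part~(c) is the same computation pushed one step further. The hypothesis supplies a conic $Q$ through all three points tangent to all three distinguished lines; $Q$ is irreducible because if $Q=\ell\cup\ell'$ then each $p_{i}$ lies on a line whose tangent there is that line, forcing two of the $\ell_{i}$ to coincide, against Corollary~\ref{cor:constraints-33-pts}\,(a). Then $Q\cdot C\geq 6+6+6=18>16=2\cdot 8$ forces $Q\subset C$.

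The real work is (d). Assuming an octic $C$ with four $[3;3]$-points $p_{1},\dots,p_{4}$, the ``in particular'' part of Lemma~\ref{lemma:milnor-number-stuff} forces $C$ to have at least four rational components and $\chi_{\textnormal{top}}(C^{\nu})\geq 8$, with no further singular points once there are exactly four components. I would then analyse the possible component configurations: since all branches at $p_{i}$ are tangent to the single line $\ell_{i}$, a line component through $p_{i}$ must equal $\ell_{i}$, and were it to pass through a second $p_{j}$ it would be a branch there tangent to $\ell_{j}$, forcing $\ell_{i}=\ell_{j}$ against Corollary~\ref{cor:constraints-33-pts}\,(a); hence each line meets the four points at most once. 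Counting branches against $\sum_{a} d_{a}=8$ then narrows the possibilities to a few cases, the principal one being four conics, each through three of the four points tangentially to the respective distinguished lines. I expect this step to be the main obstacle: the $\delta$-invariants, the Bézout totals and the genus formula turn out to be \emph{exactly} consistent for this configuration (the sum $\sum_{a}\binom{d_{a}-1}{2}$ matches $\tfrac12(\sum_a d_a^{2})-8$ identically), so no purely numerical contradiction is available and the configuration must be excluded by genuine projective geometry. My plan for the exclusion is to exploit the overdetermination of a conic through three points tangent to three prescribed lines: writing the tangency data in coordinates (placing $p_{4}$ at a vertex with $\ell_{4}$ a coordinate line) and eliminating, I would show that the simultaneous existence of all four tritangent conics forces either three of the $p_{i}$ to become collinear or two of the $\ell_{i}$ to coincide, contradicting Corollary~\ref{cor:constraints-33-pts}. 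The remaining, more degenerate configurations (those containing line components or a component of degree $\geq 3$) are handled by the same branch-counting, each reducing to a smaller conic count that is ruled out in the same way.
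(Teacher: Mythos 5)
Parts~(a)--(c) of your proposal are correct and follow essentially the same route as the paper: (a) is Lemma~\ref{lemma:degree-bounds}, and (b), (c) are the same B\'ezout counts against an auxiliary conic; your explicit local bound $I_{p}(Q,C)\geq 6$ for a conic tangent to the distinguished direction is exactly the ``$2\cdot 6$'' the paper uses implicitly. Your remark in (d) that the $\delta$-invariants, B\'ezout totals and genus formula are \emph{exactly} consistent for the four-conic configuration, so that no numerical bookkeeping can exclude it, is correct and is precisely why the paper falls back on an explicit elimination.

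The gap is in the decisive step of (d). You assert, without carrying it out, that eliminating the tangency data ``forces either three of the $p_{i}$ to become collinear or two of the $\ell_{i}$ to coincide''. That is not what the elimination yields. The actual computation (done in \cite[1111.m2]{Anthes:2018}) shows: after fixing $p_{1},p_{2},p_{3}$ and the tangents at $p_{1},p_{2}$, the three conics through $p_{4}$ share a common tangent at $p_{4}$ if and only if $p_{4}$ lies on the conic $C_{0}$ determined by the first three points --- and then all four conics collapse to $C_{0}$, so the putative octic is $4C_{0}$, which is non-reduced. The contradiction one actually reaches is degeneracy of the configuration, not a violation of Corollary~\ref{cor:constraints-33-pts}, so your planned contradiction would not materialise in the form you predict; you would still need to run the elimination and then argue from non-reducedness. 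Separately, dismissing the remaining component types as ``handled by the same branch-counting'' is too quick: the case of two lines plus a residual sextic is excluded by a Milnor-number estimate on that sextic ($\mu(D)\geq 26>25$), and the case of one line, two conics and a cubic needs the observation that a rational cubic's node or ordinary cusp cannot be a branch of a $[3;3]$-point --- neither of these reduces to ``a smaller conic count''.
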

\begin{proof}
Statement~\ref{prop:max-no-of-33-pts:item:one-33-point} follows immediately from Lemma \ref{lemma:degree-bounds}~\ref{lemma:degree-bounds:item:one-nn-point}.
To prove \ref{prop:max-no-of-33-pts:item:three-33-points}, let $C$ be a plane curve of degree $d$ with three $[3;3]$-points $p_{i}$, $i=1,2,3$.
By Corollary~\ref{cor:constraints-33-pts}, they are in general position insofar as that there exists a smooth conic $D\subset\PP^2$ through $p_{1}$, $p_{2}$ and $p_{3}$ which passes through $p_{1}$ and $p_{2}$ in distinguished tangent direction.
If $D$ is not contained in $C$, then $2d = DC\geq 2\cdot 6+3$, hence, $d\geq 8$.
If $D$ is contained in $C$, then the residual curve $C' = C-D$ of degree $d-2$ has three $[2;2]$-points along $D$; hence, $2(d-2) = DC' \geq 12$, which yields $d\geq 8$.
We analogously conclude part~\ref{prop:max-no-of-33-pts:item:three-33-points-along-conic} since if $D$ also passes through $p_{3}$ in distinguished tangent direction but is not contained in $C$, then $2d\geq 3\cdot 6 = 18$, hence $d\geq 9$.

To prove~\ref{prop:max-no-of-33-pts:item:octic}, we will derive a contradiction from the assumption that there exists such a curve $C$.
It follows from part~\ref{prop:max-no-of-33-pts:item:three-33-points} above that such a  $C$ has to be reduced: since all four $[3;3]$-points are, by assumption, isolated singularities, they have to lie on the reduced part, which has to have degree at least $7$ then, which is impossible unless $C$ is reduced.

Lemma~\ref{lemma:milnor-number-stuff} implies that $C$ has at least four rational components and that if $C$ has only $4$ components, it has no more singularities than the four $[3;3]$-points.
To prove the claim, we have to rule out all possible cases.
We distinguish the cases according to the number of lines in $C$.

If $C$ would not contain any line, it had to be a union of four smooth conics.
Since a sextic has at most two $[3;3]$-points by part~\ref{prop:max-no-of-33-pts:item:three-33-points}, we had to have three of the four $[3;3]$-points on each of the four conics.
Thus, if there were such a conic $C$, it had to be given as follows:
Suppose $p_{i}\in C$, $i=1,\dots,4$, are pairwise distinct $[3;3]$-points.
By Corollary~\ref{cor:constraints-33-pts}, those points and their distinguished tangents are sufficiently general such that there are four conics $C_{i}$, $i=0,\dots,3$, uniquely determined by the following properties:
\begin{itemize}[noitemsep,label=--]
\item $C_{0}$ contains $p_{1}, p_{2}$ and $p_{3}$, passing through $p_{1}$ and $p_{2}$ in distinguished tangent direction.
\item $C_{1}$ contains $p_{2}, p_{3}$ and $p_{4}$, passing through $p_{2}$ and $p_{3}$ in distinguished tangent direction.
\item $C_{2}$ contains $p_{1}, p_{3}$ and $p_{4}$, passing through $p_{1}$ and $p_{3}$ in distinguished tangent direction.
\item $C_{3}$ contains $p_{1}, p_{2}$ and $p_{4}$, passing through $p_{1}$ and $p_{2}$ in distinguished tangent direction.
\end{itemize}
Then $C$ is the union of those four.
In particular, $C$ is uniquely determined by the four points and two of the distinguished tangents.
This is the key to the proof that no such $C$ can exist.
With the help of projective automorphism, we can fix three points $p_{i}$, $i=1,2,3$, and two tangent directions in $p_{1}$ and $p_{2}$, as long as they do not point towards any of the remaining two points.
From this, we can compute $C_{0}$ and derive the distinguished tangent at $p_{3}$.
Then we compute $C_{1}$, $C_{2}$ and $C_{3}$ in dependence of a variable fourth point $p_{4}\in\PP^2$ and consider their tangent lines at $p_{4}$.
If an octic $C$ as desired existed, then for at least one point $p_{4}$, all three tangent lines would agree.
A computation shows that this happens if and only if $p_{4}$ lies on $C_{0}$, but then $C_{0} = C_{1} = C_{2} = C_{3}$, which is an irrelevant degenerate case.
An explicit calculation in Macaulay2 can be found in \cite[1111.m2]{Anthes:2018}.

Now suppose that $C$ contains exactly one line $L\subset C$.
Then the only possibility is that $C = E+D_{1}+D_{2}+L$, where $E$ is an irreducible cubic and $D_{1}$, $D_{2}$ are irreducible conics.
Since $C$ has only four components, all four of them are rational and $C$ has no extra singularities.
In particular, $E$ must be rational, hence either nodal or with an ordinary cusp.
But both are impossible since neither nodes nor ordinary cusps can contribute to $[3;3]$-points.
Thus, $C$ cannot contain just one line.

It remains to consider the possibility that $C$ decomposes into the union of two lines and a possibly reducible sextic $D\subset C$.
By Lemma~\ref{lemma:milnor-number-stuff} \ref{lemma:milnor-number-stuff:item1}, $\mu(D)\leq 25$.
From this or Proposition~\ref{prop:max-no-of-33-pts} \ref{prop:max-no-of-33-pts:item:three-33-points} we conclude that $D$ can have at most two $[3;3]$-points.
On the other hand, by Corollary~\ref{cor:constraints-33-pts}~\ref{cor:constraints-33-pts:item:distinct-tangents}, two lines on an octic can give rise to branches of at most two $[3;3]$-points of $C$, so that the residual sextic $D$ had to have at least two $[3;3]$-points.
Thus, $D$ had to have exactly two such and along the lines it had to have two $[2;2]$-points, which have Milnor number at least $3$, so that $D$ had to have total Milnor number $\mu(D)\geq 26 > 25$, yet again a contradiction.

This completes the proof.
\end{proof}

\begin{proposition}\label{prop:constraints-quad-pts}
If $C\subset \PP^2$ is a plane octic, then the following holds:
\begin{enumerate}[label=\alph*)]
\item\label{prop:constraints-quad-pts:item:no-3-collinear} No three quadruple-points on $C$ are collinear.
\item\label{prop:constraints-quad-pts:item:four} If $C$ has four quadruple-points, then $C$ is a union of four (possibly reducible) conics, meeting precisely in the (necessarily ordinary) quadruple-points.
\end{enumerate}
\end{proposition}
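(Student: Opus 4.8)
For part~\ref{prop:constraints-quad-pts:item:no-3-collinear} the plan is simply to quote the collinearity bound. Three quadruple-points would be three collinear singular points of multiplicity $4$, so Lemma~\ref{lemma:degree-bounds}~\ref{lemma:degree-bounds:item:collinear} would force $\deg C\geq 1-3+(4+4+4)=10$, contradicting $\deg C=8$. Nothing further is required.

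For part~\ref{prop:constraints-quad-pts:item:four} I would first reduce to the case that $C$ is reduced. Since quadruple-points are isolated singularities by definition (Definition~\ref{def:n-n-sings}), they lie on $C_{\textnormal{red}}$ and $C$ agrees with $C_{\textnormal{red}}$ near each of them; hence $C_{\textnormal{red}}$ again carries four quadruple-points. As a point of multiplicity $4$ has Milnor number $\mu_p\geq(4-1)^2=9$, the total Milnor number of $C_{\textnormal{red}}$ is at least $36$, so Lemma~\ref{lemma:milnor-number-stuff}~\ref{lemma:milnor-number-stuff:item1} gives $(\deg C_{\textnormal{red}}-1)^2\geq 36$, i.e.\ $\deg C_{\textnormal{red}}\geq 7$; equality is excluded because it would force $C_{\textnormal{red}}$ to be a union of concurrent lines with a single singular point rather than four quadruple-points. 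Thus $\deg C_{\textnormal{red}}\geq 8$ and $C$ is reduced.

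The heart of the argument is a pencil-of-conics estimate. By part~\ref{prop:constraints-quad-pts:item:no-3-collinear} the four quadruple-points $p_1,\dots,p_4$ are in general position, so the conics through them form a pencil whose generic member is smooth and irreducible; I fix one such $Q$ that is not a component of $C$. Writing $M(D)=\sum_i m_{p_i}(D)$ for the total multiplicity of an irreducible component $D\subset C$ at the four points, B\'ezout gives $2\deg D=Q\cdot D\geq\sum_i m_{p_i}(Q)\,m_{p_i}(D)=\sum_i m_{p_i}(D)=M(D)$, using that $Q$ is smooth at each $p_i$. Summing over all components and using $\sum_D M(D)=\sum_i m_{p_i}(C)=16=2\deg C$ forces equality everywhere: $M(D)=2\deg D$ for every $D$, and $Q$ meets each $D$ transversally and only at the $p_i$. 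A genus estimate then bounds the degrees: for an irreducible $D$ of degree $e$ through $j\leq 4$ of the points the bound $\sum_l\binom{m_l}{2}\leq\binom{e-1}{2}$ together with $\sum_l m_l=2e$ and $\sum_l m_l^2\geq(2e)^2/j\geq e^2$ yields $e(e-2)\leq(e-1)(e-2)$, forcing $e\leq 2$. So every component is a line or an (automatically smooth) irreducible conic, and $M(D)=2\deg D$ says each conic passes through all four points while each line passes through exactly two of them.

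To finish I would assemble the components into four conics and check the intersection and ordinarity assertions. Counting multiplicities at a fixed $p_i$ shows that exactly $4-c$ lines pass through it, where $c$ is the number of irreducible conic components; hence the lines form a $(4-c)$-regular simple graph on the vertices $p_1,\dots,p_4$, which (having maximal degree at most $3$) forces $c\geq 1$ and admits a $1$-factorization into $4-c$ perfect matchings. Pairing the two vertex-disjoint lines of each matching into a reducible conic and adjoining the $c$ irreducible conics exhibits $C$ as a union of four conics. Ordinarity of the $p_i$ and the claim that the conics meet precisely there both follow from the same B\'ezout bookkeeping: two conics through all four points meet in exactly those four points, so a tangency at one $p_i$ would use up an intersection and miss another; a conic and a line through two of the points meet only in those two; and distinct lines through a common point are automatically transverse. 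I expect the combinatorial pairing to be the only real obstacle: one must verify that the matchings can be chosen so that lines lying in \emph{different} conics always share a vertex, so that the hidden node of each reducible conic stays internal and every inter-conic intersection remains at a quadruple-point. The degree and multiplicity bounds, by contrast, are routine once the auxiliary conic $Q$ has been fixed.
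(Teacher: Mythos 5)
Your argument is correct in substance but follows a genuinely different route from the paper's for part~\ref{prop:constraints-quad-pts:item:four} (part~\ref{prop:constraints-quad-pts:item:no-3-collinear} is identical: both quote Lemma~\ref{lemma:degree-bounds}~\ref{lemma:degree-bounds:item:collinear}). The paper never decomposes $C$ into irreducible components. It works with the whole pencil of conics through the four points: any member $D$ tangent to a local branch of $C$ at one of them must be contained in $C$ (otherwise $16 = DC\geq 3\cdot 4+5$); hence no quadruple-point can be degenerate (a degenerate one would force such a $D\subset C$ with $D(C-D)\geq 3\cdot 3+4 = 13>12$); and then the four distinct tangent directions at a single ordinary quadruple-point single out four pencil members tangent to branches of $C$, all contained in $C$ and of total degree $8$, so they exhaust $C$. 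Your route --- reduction to the reduced case via Milnor numbers, a per-component B\'ezout count against one fixed auxiliary conic, the genus bound forcing every component to be a line or a smooth conic, and the graph-theoretic reassembly --- is sound and buys more explicit information (it exhibits the possible configurations, e.g.\ the $K_4$ arrangement of six lines plus one smooth conic), and it makes the reducedness reduction explicit where the paper leaves it implicit; the price is substantially more bookkeeping.

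The step you flag as unverified does close, and you should write it out rather than leave it as an expectation. The pairing is forced, not chosen: two distinct line components meet at a $p_i$ exactly when they share a vertex, and at a point outside $\{p_1,\dots,p_4\}$ exactly when they are vertex-disjoint; so the only admissible pairing puts each line together with its (unique) vertex-disjoint partner, and in each of the three graphs that occur ($K_4$, the $4$-cycle, a single perfect matching) this does produce perfect matchings. Once the four conics $D_1,\dots,D_4$ are so assembled, each passes through every $p_i$ with multiplicity one and no two share a component, so $4 = D_jD_k\geq \sum_i I_{p_i}(D_j,D_k)\geq 4$ forces equality: the conics meet pairwise only at the $p_i$ and transversally there, which simultaneously yields the statement that they meet precisely in the quadruple-points and the ordinarity of those points (the four branches of $C$ at each $p_i$, one per conic, have pairwise distinct tangents).
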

\begin{proof}
Statement~\ref{prop:constraints-quad-pts:item:no-3-collinear} is a special case of Lemma~\ref{lemma:degree-bounds}~\ref{lemma:degree-bounds:item:collinear}.

To prove~\ref{prop:constraints-quad-pts:item:four}, let $C$ be a plane octic with four quadruple-points.
By part~\ref{prop:constraints-quad-pts:item:no-3-collinear}, the quadruple-points are in general position so that there is a pencil of conics through those four points which spans the tangent spaces.
First, observe that if a conic $D$ of this pencil is tangent to a local analytic branch of one of the quadruple-points, then $D$ has to be contained in $C$, for otherwise we had to have $16 = DC \geq 3\cdot 4 + 5 = 17$.
Thus, to conclude that $C$ is a union of four members of the pencil, it suffices to show that at least one of the quadruple-points is non-degenerate.
In fact, they all are: If one of them were degenerate, then there would exist a conic $D$ of the pencil passing through it in the corresponding tangent direction, so that $D\subset C$ as above and $D(C-D) \geq 3\cdot 3 + 4 = 13 > 12$, unless $2D\subset C$, which is excluded since the quadruple-points are assumed to be isolated singularities.
\end{proof}

\begin{lemma}\label{lemma:quadruple-on-dist-tangent}
Let $C$ be a plane octic curve.
Suppose that $C$ has a $[3;3]$-point and a quadruple-point along the distinguished tangent line $L$ of the $[3;3]$-point.
Then $C$ contains $L$ and the residual septic $C-L$ meets $L$ only in those two points.
In particular, there is no second quadruple-point along $L$.
\end{lemma}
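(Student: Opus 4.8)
The plan is to peel off the line $L$ using the degree bound we already have, and then reduce the two remaining assertions to a single Bézout count on the residual septic.

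First I would apply Lemma~\ref{lemma:degree-bounds}~\ref{lemma:degree-bounds:item:nn-and-m} with $n = 3$ and $m = 4$. By hypothesis $C$ has a $[3;3]$-point $p$ and a quadruple-point $q$ on the distinguished tangent line $L$ of $p$, so that lemma gives $\deg C \geq 2\cdot 3 + 4 = 10$ unless $C$ contains $L$. Since $\deg C = 8 < 10$, we are forced into the exceptional case $L \subset C$, which is the first assertion. Because the $[3;3]$-point and the quadruple-point are isolated singularities, $L$ occurs in $C$ with multiplicity exactly one (as noted at the beginning of the proof of Lemma~\ref{lemma:degree-bounds}); hence the residual curve $C' := C - L$ is a septic not containing $L$, and $C'L = 7$ by Bézout.

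Next I would bound the two local intersection numbers from below and show that they already exhaust $C'L$. At the quadruple-point, $\mult_{q}(C) = 4$ and $\mult_{q}(L) = 1$ force $\mult_{q}(C') \geq 3$, whence $(C'L)_{q} \geq 3$. At the $[3;3]$-point the same bookkeeping of strict transforms as in the proof of Lemma~\ref{lemma:degree-bounds}~\ref{lemma:degree-bounds:item:one-nn-point} shows that $C'$ acquires a $[2;2]$-point at $p$ whose distinguished tangent is again $L$: blowing up $p$, the equality of strict transforms $\widetilde{C} = \widetilde{L} + \widetilde{C'}$ together with $\mult_{\tilde p}(\widetilde{C}) = 3$ and $\mult_{\tilde p}(\widetilde{L}) = 1$ give $\mult_{\tilde p}(\widetilde{C'}) = 2$. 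Evaluating $(C'L)_{p}$ through its infinitely near points then yields $(C'L)_{p} \geq \mult_{p}(C')\,\mult_{p}(L) + \mult_{\tilde p}(\widetilde{C'})\,\mult_{\tilde p}(\widetilde{L}) \geq 2 + 2 = 4$.

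Combining the two estimates, $(C'L)_{p} + (C'L)_{q} \geq 4 + 3 = 7 = C'L$, so the global intersection number is exhausted at $p$ and $q$. Equality forces $(C'L)_{p} = 4$, $(C'L)_{q} = 3$ and $C' \cap L = \{p, q\}$, which is the second assertion. The final claim is then immediate: a hypothetical second quadruple-point $q' \in L$, distinct from $p$ and $q$, would satisfy $\mult_{q'}(C') \geq 3 > 0$ and hence would lie on $C' \cap L$, contradicting $C' \cap L = \{p, q\}$. The step I expect to be the main obstacle is the local analysis at $p$: one must check carefully that deleting the line component preserves the tangency, i.e.\ that $L$ is still the distinguished tangent of the residual $[2;2]$-point, so that the infinitely near contribution of $2$ is genuinely present and the bound $(C'L)_{p} \geq 4$ holds rather than merely $(C'L)_{p} \geq 2$; this tangency is precisely what makes the two local bounds add up to the full Bézout number $7$.
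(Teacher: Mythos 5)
Your proof is correct and follows essentially the same route as the paper's: force $L\subset C$ by comparing $CL=8$ with the local intersection multiplicities $6+4$ at the two singular points (you invoke Lemma~\ref{lemma:degree-bounds}~\ref{lemma:degree-bounds:item:nn-and-m}, which encapsulates exactly that computation), then exhaust $C'L=7$ by the local contributions $4+3$ of the residual $[2;2]$-point and triple point. Your explicit verification that $L$ remains the distinguished tangent of the residual $[2;2]$-point is a welcome elaboration of a step the paper leaves implicit, but it is not a different argument.
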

\begin{proof}
Let $p_{1}$ and $p_{2}$ be the $[3;3]$- and quadruple-point in question.
Then $C$ contains $L$ since
$CL = 8 \leq 6+4 \leq \mathrm{I}_{p_{1}}(C,L)+\mathrm{I}_{p_{2}}(C,L)$, where $\mathop{I}_{p}(-,-)$ denotes the intersection multiplicity at $p$.
Since $C$ is reduced at $p_{1}$, $L$ is not contained in the residual septic $D = C-L$ and so
$7 = DL \geq \mathrm{I}_{p_{1}}(D,L)+\mathrm{I}_{p_{2}}(D,L)\geq 4+3 = 7;$
thus, $D$ and $L$ meet only in $p_{0}$ and $p_{1}$ and $\mathrm{I}_{p_{1}}(D,L) = 4$, $\mathrm{I}_{p_{2}}(D,L) = 3$.
\end{proof}

\begin{remark}
The proof of Lemma~\ref{lemma:quadruple-on-dist-tangent} furthermore shows that if an octic has a quadruple-point on the distinguished tangent line of a $[3;3]$-point, then it is not a special tangent line of the quadruple-point as well.
Nonetheless, the quadruple-point could be degenerate.
\end{remark}

\begin{proposition}\label{prop:N1122-and-N1222}
There exists no plane octic curve admitting two $[3;3]$-points and two quadruple-points, or with one $[3;3]$-point and three quadruple-points.
\end{proposition}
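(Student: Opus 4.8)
The plan is to prove both non-existence statements in parallel by the same two-step device: first force a conic $D$ through the heavy base points to be a subcurve of $C$, and then show that the residual sextic $C-D$ forces the \emph{same} conic $D$ into it a second time, which is impossible once $C$ is reduced. Throughout I write $\mathrm{I}_p(-,-)$ for the local intersection multiplicity and use that every local branch of a $[3;3]$-point is tangent to its distinguished tangent line (its tangent cone is $\ell^3$).

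First I would dispose of non-reducedness. In either case $C$ carries four singular points that are isolated and of type $[3;3]$ or quadruple; by Proposition~\ref{prop:possible-curve-singularities} (or the general estimate $\mu_p+r_p-1\geq 12$ from Lemma~\ref{lemma:milnor-number-stuff}) each $[3;3]$-point has Milnor number at least $10$ and each quadruple-point at least $9$. Since these points lie on $C_{\mathrm{red}}$, its total Milnor number is at least $2\cdot 10+2\cdot 9=38$ in the first case and $10+3\cdot 9=37$ in the second, and the bound $\mu(C_{\mathrm{red}})\leq(\deg C_{\mathrm{red}}-1)^2$ of Lemma~\ref{lemma:milnor-number-stuff} gives $(\deg C_{\mathrm{red}}-1)^2\geq 37$, so $\deg C_{\mathrm{red}}\geq 8$ and therefore $C=C_{\mathrm{red}}$ is reduced.

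For the main argument in the first case, let $p_1,p_2$ be the $[3;3]$-points with distinguished tangent lines $\ell_1,\ell_2$ (distinct by Corollary~\ref{cor:constraints-33-pts}) and $q_1,q_2$ the quadruple-points. Let $D$ be a conic through $q_1,q_2,p_2$ tangent to $\ell_1$ at $p_1$; this is five linear conditions on the $\PP^5$ of conics, so such a $D$ exists. Then $\mathrm{I}_{q_i}(D,C)\geq 4$, $\mathrm{I}_{p_1}(D,C)\geq 6$ (a branch tangent to the distinguished direction meets the $[3;3]$-point with multiplicity $\geq 6$) and $\mathrm{I}_{p_2}(D,C)\geq 3$, so $D\cdot C\geq 17>16$ and hence $D\subset C$. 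Because $D$ is a union of components of $C$ and every branch of $C$ at $p_2$ is tangent to $\ell_2$, the conic $D$ is automatically tangent to $\ell_2$ at $p_2$ as well. Now set $C'=C-D$, a sextic; additivity of multiplicities gives triple points of $C'$ at $q_1,q_2$, while the surviving branches at $p_i$ stay tangent to $\ell_i$, so $C'$ has a $[2;2]$-point (tangent cone $\ell_i^2$) at each $p_i$. Recomputing, $\mathrm{I}_{q_i}(D,C')\geq 3$ and $\mathrm{I}_{p_i}(D,C')\geq 4$ (a curve tangent to $\ell_i$ meets such a $[2;2]$-point with multiplicity $\geq 4$), whence $D\cdot C'\geq 14>12$ and $D\subset C'=C-D$, contradicting reducedness. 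The second case is identical with $p,\ell_p,q_1,q_2,q_3$: the $q_i$ are non-collinear by Proposition~\ref{prop:constraints-quad-pts}, one takes $D$ through $q_1,q_2,q_3$ tangent to $\ell_p$ at $p$ (again five conditions), gets $D\cdot C\geq 4\cdot 3+6=18>16$ so $D\subset C$, and on the residual sextic $C'$ (triple points at the $q_i$, a $[2;2]$-point at $p$) obtains $D\cdot C'\geq 3\cdot 3+4=13>12$, once more forcing $D\subset C-D$.

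The hard part will be the degenerate case, where the auxiliary conic $D$ splits into two lines. Tangency of a line to $\ell_i$ (resp.\ $\ell_p$) at the $[3;3]$-point forces that line to be the distinguished tangent line itself, so $D$ then contains $\ell_i$ (resp.\ $\ell_p$); I would rule these out by the very same multiplicity counts, which are unchanged as long as no base point is the node of $D$, giving again $D\subset C-D$. The remaining genuinely special positions — a quadruple-point lying on a distinguished tangent line, or base points becoming collinear — I would exclude using Lemma~\ref{lemma:quadruple-on-dist-tangent} together with the collinearity bound of Lemma~\ref{lemma:degree-bounds}; for instance a distinguished tangent line through its $[3;3]$-point and two quadruple-points would carry collinear multiplicities $3,4,4$, forcing $\deg C\geq 9$ by Lemma~\ref{lemma:degree-bounds}, a contradiction. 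Carefully bookkeeping through these finitely many degenerations is the only delicate point; the generic two-step conic argument is the heart of the matter.
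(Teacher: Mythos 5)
Your proposal is correct and follows essentially the same route as the paper: the paper's (very terse) proof likewise takes the conic through all four singular points passing through a $[3;3]$-point in distinguished tangent direction, invokes Corollary~\ref{cor:constraints-33-pts} and Lemma~\ref{lemma:quadruple-on-dist-tangent} for the generality of the configuration, and then "compares intersection numbers and local intersection multiplicities as before," i.e.\ forces the conic into $C$ and derives a contradiction on the residual sextic exactly as you do. You merely fill in the details (reducedness via Milnor numbers, the explicit two-step count, the degenerate conics) that the paper explicitly omits.
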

\begin{proof}
By Proposition~\ref{cor:constraints-33-pts}~\ref{cor:constraints-33-pts:item:distinct-tangents}, \ref{cor:constraints-33-pts:item:no-3-collinear} and Lemma~\ref{lemma:quadruple-on-dist-tangent}, in both cases, the four points in question are general enough so that there exists a pencil of conics through all four points in question.
In particular, there exists a conic through all four points, passing through one of the $[3;3]$-points in distinguished tangent direction.
In both cases, this easily gives a contradiction comparing intersection numbers and local intersection multiplicities as before.
We omit the details.
%
\end{proof}

\begin{proposition}\label{prop:N1112}
Let $C\subset \PP^2$ be a plane octic with three $[3;3]$-points and a quadruple-point.
Then $C$ decomposes into four rational components; more precisely:

Let $p_{1},p_{2},p_{3}\in C$ be the $[3;3]$-points with distinguished tangent lines $L_{1},L_{2},L_{3}$, respectively.
Then only the following two configurations are possible.
\begin{enumerate}[label=\roman*)]
\item The three distinguished tangent lines $L_{1},L_{2},L_{3}$ meet in a point $p_{4}\in \bigcap_{i=1}^3L_{i}$ and $C = D_{5}+L_{1}+L_{2}+L_{3}$, where $D_{5}$ is a rational quintic which passes through $p_{4}$ and has three $A_{3}$-singularities (non-degenerate tac-nodes) at $p_{1},p_{2},p_{3}$ with distinguished tangent lines $L_{1},L_{2},L_{3}$, respectively.
\item There exists a conic $D_{2}$ passing through the three $[3;3]$-points in distinguished tangent direction and $C = D_{4}+D_{2}+L_{1}+L_{2}$, where $D_{4}$ is a rational quartic with an ordinary double-point in the intersection $p_{4}\in L_{1}\cap L_{2}$. Furthermore, $D_{4}$ is tangent to $D_{2}$ in $p_{1}$ and $p_{2}$ and has an $A_{3}$-singularity at $p_{3}$ with distinguished tangent line $L_{3}$.
\end{enumerate}
In either case, the $[3;3]$-points at $p_{1},p_{2},p_{3}$ and the quadruple-point at $p_{4}$ are non-degenerate and $C$ has no further singularities.
\end{proposition}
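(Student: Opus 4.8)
The plan is to show that any such octic $C$ is forced into one of the two listed shapes by a branch-counting argument at the four prescribed singular points, followed by intersection-theoretic bookkeeping. First I would record the global constraints. Since $C$ has three $[3;3]$-points, Proposition~\ref{prop:max-no-of-33-pts}\ref{prop:max-no-of-33-pts:item:three-33-points} forces its reduced part to have degree at least $8$, so $C$ is reduced. As $C$ carries four $[3;3]$- or quadruple-points, Lemma~\ref{lemma:milnor-number-stuff} gives at least four rational components and $\chi_{\textnormal{top}}(C^\nu)\geq 8$; combining the estimate $\mu_p+r_p-1\geq 12$ at each of the four points with the formula of Lemma~\ref{lemma:milnor-number-stuff}\ref{lemma:milnor-number-stuff:item2} pins this down, so that $C$ has exactly four components, all rational, with no singularities besides the points $p_1,p_2,p_3,p_4$, each of which contributes exactly $12$ and is therefore non-degenerate (of type $J_{10}$ resp.\ $X_9$).

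The key local observation is that at a non-degenerate $[3;3]$-point $p_i$ the three branches are smooth and mutually tangent to $L_i$, so any component through $p_i$ meets it in a smooth branch tangent to $L_i$. Two such branches on one component form a tacnode ($\delta=2$), whence a rational component of degree $\delta$, whose total $\delta$-invariant is $\binom{\delta-1}{2}$, can supply two branches at some $p_i$ only if $\delta\geq 4$, and two branches at each of $p_1,p_2,p_3$ only if $\delta\geq 5$; a line supplies at most one branch at $p_i$, and then it is tangent to $L_i$, hence equals $L_i$, while a conic or cubic supplies exactly one. Counting the nine branches over $p_1,p_2,p_3$ against the four-component degree partitions $5{+}1{+}1{+}1$, $4{+}2{+}1{+}1$, $3{+}3{+}1{+}1$, $3{+}2{+}2{+}1$, $2{+}2{+}2{+}2$ eliminates the last three. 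For $3{+}3{+}1{+}1$ some $p_i$ carries no line and only two cubics, giving at most two branches. For $2{+}2{+}2{+}2$ each $p_i$ lies on exactly three of the four conics, so at least one conic passes through all three $p_i$ tangentially while a second passes through two of them; since all four conics also pass through the quadruple-point $p_4$, this conflicts with B\'ezout ($\geq 2{+}2{+}1>4$). For $3{+}2{+}2{+}1$ the two conics meet only at two of the $[3;3]$-points, forcing the cubic (rational, hence with its unique node at $p_4$ by absence of further singularities) to be tangent to its line-component $L_i$ at the third $[3;3]$-point and nodal at $p_4\in L_i$, so $E\cdot L_i\geq 2{+}2>3$. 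The same tacnode budget together with this cubic–line B\'ezout violation also excludes all partitions with five or more components, leaving only $5{+}1{+}1{+}1$ and $4{+}2{+}1{+}1$.

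In $5{+}1{+}1{+}1$ the count forces the quintic to have a tacnode tangent to $L_i$ at each $p_i$, using up its entire $\delta$-invariant $6$ as three $A_3$'s, with the third branch the line $L_i$; hence $L_1,L_2,L_3\subset C$. As the quintic is smooth at $p_4$ and $\operatorname{mult}_{p_4}(C)=4$, all three lines pass through $p_4$ and the quintic meets $p_4$ simply, giving the concurrent configuration i) with $D_5$ the quintic component. In $4{+}2{+}1{+}1$ the quartic affords only one tacnode, so exactly one $[3;3]$-point, say $p_3$, is line-free and receives the quartic's tacnode plus one conic-branch; then the conic passes through $p_1,p_2,p_3$ in distinguished tangent directions, and by Proposition~\ref{prop:max-no-of-33-pts}\ref{prop:max-no-of-33-pts:item:three-33-points-along-conic} it is the aligned conic $D_2\subset C$ (its existence and the requisite general position being governed by Corollary~\ref{cor:constraints-33-pts}). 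The two lines are $L_1,L_2$; since $D_2\cdot L_i=2$ is absorbed at $p_i$ the conic avoids $p_4$, so the quartic's node $(2)$ and the two lines account for $\operatorname{mult}_{p_4}(C)=4$, whence $L_1,L_2$ meet at $p_4$, yielding configuration ii) with $D_4$ the quartic tangent to $D_2$ at $p_1,p_2$ and tacnodal at $p_3$.

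The main obstacle is precisely the exhaustive elimination of the spurious degree partitions — the intermediate four-component ones and those with five or more components — where one must balance the tacnode $\delta$-budget of each rational component against repeated B\'ezout estimates using the forced tangencies at the $p_i$ and the multiplicities at $p_4$; it is here that a sub-case is easily overlooked, which is why the explicit Macaulay2 verification in \cite{Anthes:2018} is a useful safeguard. The remaining assertions — that the surviving configurations have exactly the stated non-degenerate singularities and no others — are immediate from the exactly-four-rational-components conclusion of Lemma~\ref{lemma:milnor-number-stuff}.
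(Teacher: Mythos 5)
Your four-component analysis is a legitimate alternative to the paper's route (the paper instead exploits that any line component must meet the residual septic, with total multiplicity $7=4+3$, only at the quadruple-point and one $[3;3]$-point), and your eliminations of $2{+}2{+}2{+}2$, $3{+}3{+}1{+}1$ and $3{+}2{+}2{+}1$ by branch counting against the $\delta$-budget $\binom{d-1}{2}$ of a rational degree-$d$ component are essentially sound. The genuine gap lies in how you dispose of the configurations with five or more components.

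First, Lemma~\ref{lemma:milnor-number-stuff} only yields \emph{at least} four rational components; your opening claim that the Euler-characteristic count already ``pins this down'' to exactly four is false, since five rational components give $\chi_{\textnormal{top}}(C^\nu)=10$, which is numerically consistent with the four distinguished points contributing $48$ plus one additional node or cusp. (Relatedly, $\mu_p+r_p-1=12$ does not by itself force non-degeneracy: a degenerate quadruple-point of type $X_{10}$ also has $\mu+r-1=10+3-1=12$.) Second, and more seriously, the one-sentence claim that ``the same tacnode budget together with this cubic--line B\'ezout violation'' kills all partitions with five or more components fails on the decomposition $C=L_1+L_2+L_3+D+E$ with $D$ a smooth conic and $E$ an irreducible cubic. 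There the branch count at $p_1,p_2,p_3$ is exactly saturated ($3$ line-branches $+$ $3$ conic-branches $+$ $3$ cubic-branches $=9$), the three lines are forced to be concurrent at $p_4$, and the cubic can pass through $p_4$ \emph{smoothly} as the fourth branch while its node sits at the single permitted extra singularity --- so $E\cdot L_i\geq 2+1=3$ produces no B\'ezout violation. The actual contradiction, as in the paper, comes from the projective fact that at most two tangent lines of a smooth conic pass through any given point, so $D$ cannot be tangent to all three concurrent lines and is then forced through $p_4$ and into too many extra transverse intersections. The case of two lines and three conics likewise needs its own argument (the paper compares the total Milnor number of the residual sextic with the bound $25$ from Lemma~\ref{lemma:milnor-number-stuff}). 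Without these case analyses the classification is not established, and the Macaulay2 scripts cannot be invoked as a safeguard here: they construct the two surviving configurations and compute dimensions, but do not certify that no other configuration exists.
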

\begin{proof}
Let $C$ be a plane octic with three $[3;3]$-points and one quadruple-point.
Then, as in the proof of Proposition~\ref{prop:max-no-of-33-pts} above, we conclude that $C$ is reduced.
By Lemma~\ref{lemma:milnor-number-stuff} \ref{lemma:milnor-number-stuff:item2}, $C$ has at least four rational components and if $C$ has exactly four, then there are no extra singularities.

Let $C = C_{0}+C_{1}+C_{2}+C_{3}+C_{4}$ with $d_{i}:=\deg(C_{i})$,  $d_{1}\geq d_{2}\geq d_{3}\geq d_{4}$, and $C_{1},\dots,C_{4}$ rational.
(Note that $C_{0}$ could be empty or reducible).

Clearly, $d_{4}\leq 2$ and if $d_{4} = 2$, then $d_{0} = 0$ and $d_{1} = d_{2} = d_{3} = 2$ as well.
That is, $C$ would be a union of four smooth conics.
Going through the list of possible intersections of pairs of conics shows that there is no way for their union to have a quadruple- and three $[3;3]$-points.
Hence, we conclude $d_{4} = 1$, i.e., $C$ contains at least one line.

Before we continue, note that if $C$ has only four components and contains a cubic, then the cubic is rational and so has a node or ordinary cusp.
Either singularity does not contribute to a $[3;3]$-point and so has to be part of the quadruple-point, since $C$ has no extra singularities.

Now assume that $C$ has no more than the four singularities and contains a line.
Then the line must meet the residual septic in the singular points of $C$.
For the intersection multiplicities to add up to $7$, this has to be the quadruple- and one of the $[3;3]$-points.
This shows that $d_{\bullet} = (0,3,3,1,1)$ is impossible since the quadruple-point of $C$ had to be the union of the double-points of the two rational cubics, not allowing either line to pass through the quadruple-point as well.
Similarly, this excludes $d_{\bullet} = (0,3,2,2,1)$, for, the cubic $C_{1}$ had to have a double-point at, say, $p\in C_{1}$ and the line $C_{4}$ would have to pass through it.
But in order not to introduce an extra singularity of $C$, they must not meet anywhere else, so that the intersection at $p$ had to be with multiplicity $3$, which implies $\mu_{p}(C)+r_{p}(C)-1\geq 13$, a contradiction to (the proof of) Lemma~\ref{lemma:milnor-number-stuff}.
The only possibilities with $d_{0} = 0$ which remain are those corresponding to the claim and since every line has to pass through one of the $[3;3]$-points and the quadruple-point, the configurations have to be as claimed.

It remains to show that $C$ cannot have five or more components.
Note that in this case, $C$ had to contain at least two lines.

We first exclude the case that $C$ contains at least four lines, i.e., $d_{\bullet} = (4,1,1,1,1)$ or $(1,4,1,1,1)$.
Suppose $C$ were the union of four lines and a possibly reducible quartic.
Then the quartic had to have at least three non-collinear $[2;2]$-points, which is impossible, e.g., by Hui's classification \cite{Hui:1979}.

If $C$ had at least five components but at most three lines, then $C$ had to contain at least two lines, $L_{1}$, $L_{2}$, and a smooth conic $D$.
The residual quartic then had to have exactly two components, of which at least one had to be rational, hence, either $C = L_{1}+L_{2}+L_{3}+D+D'$ for a third line $L_{3}$ and an irreducible cubic $D'$, or $C = L_{1}+L_{2}+D+D'+D''$ for two more irreducible conics $D'$, $D''$.
Thus, we are left with these two cases.

If we had $C = L_{1}+L_{2}+L_{3}+D+D'$ with $D'$ an irreducible cubic, then $\chi_{\textnormal{top}}(C^\nu) = 8$ or $10$, so that $C$ could have at most one additional singularity, necessarily of type $A_{1}$ or $A_{2}$.
Therefore, the lines had to be concurrent, for otherwise $C$ had to have at least three singularities with local branches having different tangent directions.
In particular, the conic $D$ had to meet at least one of the lines transversely.
But in that case, one of the intersection points had to be the quadruple-point of $C$; hence, $D$ would pass through the common intersection point of the three lines.
Therefore, $D$ could be tangent to at most one of them, which would result in too many singularities for $C$, a contradiction.

Finally, if $C = L_{1}+L_{2}+D+D'+D''$ for two more irreducible conics $D'$, $D''$, then $\chi_{\textnormal{top}}(C^{\nu}) = 10$, so as before, $C$ could have at most one additional singularity, of type $A_{1}$ (since lines and conics have no cusps).
Therefore, the intersection point of $L_{1}$ and $L_{2}$ had to be an extra double-point or the quadruple-point of $C$.
If it were an extra double-point, then the residual sextic $D+D'+D''$ had to have two $[2;2]$-points, a $[3;3]$-point and a quadruple-point, resulting in a total Milnor number of at least $25$.
But the only sextic with this (maximal) Milnor number is a union of six concurrent lines by Lemma~\ref{lemma:milnor-number-stuff} \ref{lemma:milnor-number-stuff:item1}, so this is impossible.
Hence, the two lines had to meet in the quadruple-point of $C$, so that two of the conics would necessarily pass through this point as well, say $D$ and $D'$.
But then they could not be tangent to the lines in the $[3;3]$-points, as would be necessary, a contradiction.

This rules out all cases as claimed and completes the proof.
\end{proof}

We also have to study the possible configurations of non-simple singularities a reduced sextic can have.
Note that the upper bounds we give below are most likely far from optimal.
\begin{proposition}\label{prop:sextic-singularities}
Let $C$ be a reduced half-log-canonical plane sextic curve (cf.\ \ref{table:singularity-classification}).
Then the only possible non-simple singularities $C$ can have are:
\begin{enumerate}[label=\roman*)]
\item One $X_{p}$, $9\leq p\leq 24$.
\item One $Y_{r,s}$, $r,s\geq 1$, $r+s\leq 15$.
\item One $J_{10}$.
\item One $J_{2,p}$, $1\leq p\leq 14$.
\item Two $J_{10}$.
\end{enumerate}
In the last case, $C$ decomposes as a union of three conics, at most one of which degenerates into a union of two lines.
\end{proposition}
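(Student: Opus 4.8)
The plan is to reduce the whole statement to the multiplicities of the singular points, after which the only genuinely delicate case is the one with two $[3;3]$-points. First I would invoke Proposition~\ref{prop:possible-curve-singularities}: on a reduced half-log-canonical curve every singular point is either simple (type $A$, $D$, $E$) or, among the non-simple ones, a quadruple-point $X_9,X_p,Y_{r,s}$ (multiplicity $4$) or a $[3;3]$-point $J_{10},J_{2,p}$ (multiplicity $3$). Discarding the simple points, the combinatorics is governed by the degree bounds. Lemma~\ref{lemma:degree-bounds}~\ref{lemma:degree-bounds:item:n-and-m-fold} forbids two quadruple-points, since they would force $d\ge 4+4-1=7$, and Proposition~\ref{prop:max-no-of-33-pts}~\ref{prop:max-no-of-33-pts:item:three-33-points} forbids three $[3;3]$-points. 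To rule out a quadruple-point $q$ coexisting with a $[3;3]$-point $p$, I would observe that Lemma~\ref{lemma:degree-bounds}~\ref{lemma:degree-bounds:item:n-and-m-fold} gives $6=d\ge 4+3-1$ with equality, so $\overline{pq}\subset C$; by the remark after Definition~\ref{def:n-n-sings} this forces $\overline{pq}$ to be the distinguished tangent line $L$ of the $[3;3]$-point, whence $q$ lies on $L\subset C$ and Lemma~\ref{lemma:degree-bounds}~\ref{lemma:degree-bounds:item:nn-and-m} yields the contradiction $d\ge 2\cdot3+4-2=8$. This leaves exactly one quadruple-point (cases i, ii), or one $[3;3]$-point (cases iii, iv), or two $[3;3]$-points (case v).

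For the numerical bounds I would use Lemma~\ref{lemma:milnor-number-stuff}~\ref{lemma:milnor-number-stuff:item1}: the total Milnor number obeys $\mu(C)\le(d-1)^2=25$, with equality only for six concurrent lines, which carry a sextuple-point and hence none of our germs. Reading $\mu(X_p)=p$, $\mu(Y_{r,s})=9+r+s$, $\mu(J_{2,p})=10+p$ from \ref{table:singularity-classification} and excluding the extremal value then gives $p\le 24$, $r+s\le 15$ and $p\le 14$, as asserted.

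The real work is case v, with two $[3;3]$-points $p_1,p_2$ whose distinguished tangent lines $L_1,L_2$ are distinct by Corollary~\ref{cor:constraints-33-pts}~\ref{cor:constraints-33-pts:item:distinct-tangents}. I would work inside the pencil $\Pi$ of conics through $p_1,p_2$ in the two distinguished tangent directions, which exists by the discussion following Corollary~\ref{cor:constraints-33-pts}. The key local input is that every $D\in\Pi$ satisfies $\mathrm{I}_{p_i}(D,C)\ge 6$: at $p_i$ the curve $C$ is a union of branches of multiplicities summing to $3$, each tangent to $L_i$, and a smooth curve tangent to $L_i$ meets a branch of multiplicity $m$ with intersection at least $2m$ (this is visible from the normal forms $J_{10}$ and $J_{2,p}$). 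Taking the member $D\in\Pi$ through one extra point $p_3\in C\setminus\{p_1,p_2\}$, B\'ezout gives $12=DC\ge 6+6+1$ unless $D\subset C$; so $C$ contains a conic of $\Pi$. Since two distinct members of $\Pi$ meet with $\mathrm{I}_{p_1}+\mathrm{I}_{p_2}\le 4$ while each local multiplicity is at least $2$, they meet with multiplicity exactly $2$ at each $p_i$; hence the residual quartic $C_1=C-D$ still satisfies $\mathrm{I}_{p_i}(D',C_1)\ge 6-2=4$ for every $D'\in\Pi$ with $D'\ne D$. Repeating the B\'ezout argument ($8=D'C_1\ge 4+4+1$ unless $D'\subset C_1$) peels off a second conic $D'\in\Pi$, and the residual $C_2=C-D-D'$ is a third conic, necessarily again in $\Pi$. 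As $C$ is reduced these three conics are distinct, so $C=D+D'+C_2$ is a union of three members of $\Pi$.

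It then remains to identify the singularity type and the degeneracy. Because two distinct members of $\Pi$ meet at $p_i$ with multiplicity exactly $2$, the three branches of $C$ at $p_i$ have pairwise distinct curvatures; after one blow-up they give an ordinary triple point ($D_4$), so $p_i$ is of type $J_{10}$ and never $J_{2,\bullet}$. Finally, a reducible member of $\Pi$ must consist of two lines tangent to $L_1$ at $p_1$ and to $L_2$ at $p_2$, i.e.\ it must be $L_1+L_2$; this member is unique, so at most one of the three conics splits into two lines (in which case the two lines cross in an extra node, a simple singularity the statement does not record). I expect the main obstacle to be exactly this peeling procedure in case v: establishing the uniform contact estimate $\mathrm{I}_{p_i}(D,C)\ge 6$ from the local normal forms and tracking the intersection multiplicities at $p_1,p_2$ through the two residual steps is where the argument is least formal, whereas the counting and the Milnor-number bounds follow directly from the quoted results.
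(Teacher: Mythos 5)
Your proof is correct and reaches all the stated conclusions, but it takes a genuinely different route from the paper in the two substantive steps. For the enumeration of admissible combinations, you work purely with the degree bounds of Lemma~\ref{lemma:degree-bounds} and Proposition~\ref{prop:max-no-of-33-pts} (two quadruple-points force $d\geq 7$, a quadruple-point on the distinguished tangent line forces $d\geq 8$, three $[3;3]$-points force $d\geq 8$), whereas the paper first observes that every non-simple half-log-canonical germ has Milnor number at least $9$, so $\mu(C)\leq 24$ allows at most two of them, and only then excludes the quadruple-plus-anything configurations by a degree argument on the residual quintic; the two counts are equivalent in outcome, and your version has the small advantage of not needing the Milnor bound until the parameter estimates $p\leq 24$, $r+s\leq 15$, $p\leq 14$, which you derive exactly as the paper does. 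For the crucial case of two $[3;3]$-points, the paper argues via the Euler-characteristic formula of Lemma~\ref{lemma:milnor-number-stuff}~\ref{lemma:milnor-number-stuff:item2}: $C$ must have at least three rational components, a line-plus-conic-plus-cubic splitting is excluded because a rational cubic carries a node or cusp that cannot feed a $[3;3]$-point, and the conic that must then be present is peeled off by looking at the residual quartic. You instead work directly in the pencil $\Pi$ of conics through $p_1,p_2$ in the distinguished tangent directions, prove the uniform contact estimate $\mathrm{I}_{p_i}(D,C)\geq 6$ from the infinitely near triple point, and peel off three members of $\Pi$ by B\'ezout; this is more constructive and identifies the three conics as members of a concrete pencil, and your derivation of non-degeneracy (pairwise contact exactly $2$ at each $p_i$ forces an ordinary infinitely near triple point) coincides with the paper's closing remark. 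The one place you should tighten the argument is the B\'ezout step itself: the member of $\Pi$ through your auxiliary point could be the unique reducible member $L_1+L_2$, which might share only one of its two lines with $C$, so that B\'ezout against the full conic does not literally apply; this is repaired by choosing the auxiliary point on $C$ (respectively on the residual quartic) away from $L_1\cup L_2$, which is possible since a reduced curve of degree at least $3$ cannot be contained in $L_1\cup L_2$, and by noting that no double line lies in $\Pi$. With that adjustment the peeling goes through verbatim.
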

\begin{proof}
The maximal total Milnor number $\mu(C)$ of a reduced sextic $C\subset\PP^2$ without a $6$-fold-point is $24$.
Since all reduced non-simple plane curve singularities with log-canonical threshold at least $\tfrac{1}{2}$ have Milnor number greater or equal than $9$, we conclude that $C$ can have at most two such.
Moreover, this explains the given upper bounds since the Milnor number of a singularity of type $X_{p}$, $Y_{r,s}$ and $J_{2,p}$ is $p$, $9+r+s$ and $10+p$, respectively.

If $C$ has a quadruple-point, then $C$ cannot have a second non-simple singularity:
By Lemma~\ref{lemma:degree-bounds}~\ref{lemma:degree-bounds:item:n-and-m-fold}, it must contain the line $L$ joining them.
But then the residual quintic $C' = C-L$ has to have either two triple-points, or a triple-point and a $[2;2]$-point with $L$ as distinguished; both options are impossible by Lemma~\ref{lemma:degree-bounds}~\ref{lemma:degree-bounds:item:n-and-m-fold} and~\ref{lemma:degree-bounds:item:nn-and-m}, respectively.

Thus, it remains to show that if $C$ has two $[3;3]$-points, then they are both non-degenerate and that $C$ is a union of three conics then, meeting tangentially in both points.
In fact, as in the proof of Proposition~\ref{prop:constraints-quad-pts}, we can conclude that if $C$ has two $[3;3]$-points, then it has at least three rational components and that if it has exactly three, then there are no further singular points.
Then either $C$ is the union of three rational conics, or $C$ contains a line.
The only possibility for $C$ to contain exactly one line is that $C = C_{1}+C_{2}+C_{3}$, with $C_{1}$ a line, $C_{2}$ a rational conic and $C_{3}$ a rational cubic.
But a rational cubic has an $A_{1}$ or $A_{2}$-singularity, which does not contribute to a $[3;3]$-point, so that $C$ had to have an extra singular point, a contradiction.
Thus, $C$ contains a conic $D$ (which may be the union of two lines).
Since the residual quartic $C' = C-D$ cannot have a $[3;3]$-point, the conic $D$ must pass through both $[3;3]$-points in distinguished tangent direction and $C'$ has to have two $[2;2]$-points where $C$ has its $[3;3]$-points.
But then $C'$ decomposes as a union of two conics meeting tangentially in those two points.
Since the intersection number of any pair of these three conics has to be $4 = 2+2$, the $[3;3]$-points have to be non-degenerate.
\end{proof}

The quartics are easy to deal with: There is only one quartic with a non-simple singularity, namely, the union of four concurrent lines, giving rise to a single $X_{9}$-singularity, see, e.g., Hui \cite{Hui:1979}.
Alternatively, this also follows from Lemma~\ref{lemma:milnor-number-stuff} since the Milnor number of a non-simple singularity is at least $9 = (4-1)^2$.

Since this is used in the Macaulay2-code \cite[sextics.m2~II.2]{Anthes:2018}, note that from Hui's classification we also know that there are quartics with globally two different kinds of $A_{5}$-singularities, namely, some where the higher order directions are non-trivial, and some where they are not.

\section{The Macaulay2-code}
\label{section:M2}

We conclude this article with a quick tour through the arguments used in the Macaulay2-code which computes the dimension of the various strata and shows that the components are disjoint.
All scripts can be obtained from a GitLab repository \cite{Anthes:2018}.
They are inspired by a similar script by S\"onke Rollenske.

If the singularities we want a plane curve to have are controlled by a configuration which can be fixed by a suitable automorphism, then the dimension of this component is easy to compute using Macaulay2.
One puts all constraints in an ideal and asks the system for a minimal generating set of the module of octics satisfying these equations.
An example illustrating this is given in \ref{listing:one-quadruple-point}.

\begin{lstlisting}[language=macaulay2,caption={Example without parameters},label={listing:one-quadruple-point}]
S = QQ[x,y,z]; -- Homog. coordinate ring of PP^2
Point = ideal(x,y); -- Homog. ideal of (0;0;1) in PP^2
QuadruplePoint = Point^4;
m = super basis(8,QuadruplePoint) -- outputs:
--  | x8 x7y x7z x6y2 x6yz x6z2 x5y3 x5y2z x5yz2 x5z3 x4y4 x4y3z
--   ----------------------------------------------------------------
--   x4y2z2 x4yz3 x4z4 x3y5 x3y4z x3y3z2 x3y2z3 x3yz4 x2y6 x2y5z
--   ----------------------------------------------------------------
--   x2y4z2 x2y3z3 x2y2z4 xy7 xy6z xy5z2 xy4z3 xy3z4 y8 y7z y6z2 y5z3
--   ----------------------------------------------------------------
--   y4z4 |
assert(numgens source m == 35);
\end{lstlisting}

It shows that the sub-space of the vector space of octic forms in $x,y,z$ whose associated plane curve has multiplicity at least four in $(0;0;1)\in\PP^{2}$ is of dimension $35$.
Thus, their linear system is of dimension $34$ and there is an open sub-space $V$ of octics where the quadruple-point is non-degenerate and which has no further non-simple singularities.
Every plane octic curve with a quadruple-point is projectively equivalent to one of those with a quadruple-point in $(0;0;1)$.
Therefore, the space of octic curves with exactly one quadruple-point and no other non-simple singularities is the quotient $V/G$, where $G\subset\PGL(3,\CC) = \mathrm{Aut}(\PP^{2})$ is the stabiliser of the point $(0;0;1)$.
Since the dimension of $G$ is $6$, we conclude $\dim(\Nfrak_{2}) = 34-6 = 28$.

When the configuration cannot be fixed by an automorphism, then we have to consider parameters.
As an example \ref{listing:N12}, we consider the case of a $[3;3]$-point and a quadruple-point.
Up to projective automorphism, there are two distinct configurations; one where the distinguished tangent of the $[3;3]$-point points towards the quadruple-point and one where it does not.

\begin{lstlisting}[language=macaulay2,caption={Example with a parameter},label={listing:N12}]
A = QQ[t]; -- Affine coordinate ring of parameter space
S = A[x,y,z]; -- Homog. coordinate ring of trivial PP^2-family
P = ideal(x,y); -- Homog. ideal of (0;0;1) in PP^2
PwT = ideal(x^2,y-t*x); -- (0;0;1) with tangent direction y-tx
P33 = PwT^3; -- Corresponding [3;3]-point constraints
Q = ideal(y,z); -- Homog ideal of (1;0;0)
I0 = intersect(Q^4,sub(P33,{t=>0}));
I1 = intersect(Q^4,sub(P33,{t=>1}));
m0 = super basis(8,I0);
m1 = super basis(8,I1);
assert(numgens source m0 > numgens source m1);
-- Thus, something special is going on if t = 0. In fact, that is where y-t*x lies in Q. Another component?
-- We consider the universal octic with a [3;3]-point as prescribed:
m = super basis(8,P33);
n = numgens source m;
RA = A[a_0..a_(n-1)];
params = gens RA;
RS = RA[gens S];
inc = map(RS,S);
f = sum for i from 0 to (n-1) list a_i*inc(m_(0,i));
-- The conditions that it has a quadruple-point at Q:
toBeZero = f%inc(Q^4);
toBeZeroCoefficients =
for term in terms toBeZero list leadCoefficient(term);
M = matrix for eq in toBeZeroCoefficients list
for g in gens RA list sub(leadCoefficient(eq//g),A);
-- For every t = t_0, the kernel of sub(M,{t=>t_0}) corresponds to the space of octic forms with a quadruple-point in Q and a [3;3]-point in P with distinguished tangent direction y-t_0*x. Thus, the rank of M drops where something interesting is happening:
droppingRankConditions = minors(numgens target M, mingens image M);
assert(droppingRankConditions == ideal(t));
-- Thus, generically, the rank of M is maximal (10, in fact) and it drops if and only if t = 0. Furthermore, the difference between the octic forms obtained for t = 0 and those arising as limits t --> 0, t != 0, corresponds to the difference between the kernels.
Kspecial = mingens kernel sub(M,{t=>0});
Kgeneral = mingens sub(kernel M, {t=>0});
assert isSubset(image Kgeneral, image Kspecial);
assert(image Kspecial != image Kgeneral);
-- They give rise to the following octics:
special = sub(matrix{ for j from 0 to numgens source Kspecial-1 list
sub(f,for i from 0 to numgens target Kspecial-1 list
params_i=>Kspecial_(i,j))},{t=>0});
special = sub(special, S);
general = sub(matrix{ for j from 0 to numgens source Kgeneral-1 list
sub(f,for i from 0 to numgens target Kgeneral-1 list
params_i=>Kgeneral_(i,j))},{t=>0});
general = sub(general, S);
-- The line y is contained once in every member of the special locus, but it is contained twice in every member of the general locus:
use S;
assert( special%y == 0 and not special%y^2 == 0 );
assert( general%y^2 == 0 );
-- Since non-reduced octics are not allowed in this stratum, the components are disjoint.
\end{lstlisting}

Since jobs like creating the ideals containing the constraints or building a universal family etc.\ have to be done multiple times, they are provided as functions in the file \cite[octicsFunctions.m2]{Anthes:2018}.
Explanations how they work can be found in the comments there.
For example, there is also a function checking whether a quadruple-point is ordinary.
(It blows up once and checks that the discriminant is non-trivial.
Therefore, if applied over a coefficient ring which is not a field, it only means that it is generically non-degenerate.)
Similarly, there is a function checking if a $[3;3]$-point is non-degenerate.
%
\bibliographystyle{plain}
\bibliography{references}{}
\end{document}